\newtheorem{theorem}{Theorem}[section]
\newtheorem{prop}[theorem]{Proposition}
\newtheorem{lemma}[theorem]{Lemma}
\newtheorem{conjecture}[theorem]{Conjecture}
\theoremstyle{definition}
\newtheorem{question}{Question}
\newtheorem{defn}[theorem]{Definition}
\newtheorem{claim}[theorem]{Claim}
\newtheorem{fact}[theorem]{Fact}
\newcommand{\btheorem}{\begin{theorem}}
\newcommand{\etheorem}{\end{theorem}}
\newcommand{\bconjecture}{\begin{conjecture}}
\newcommand{\econjecture}{\end{conjecture}}
\newcommand{\bproposition}{\begin{proposition}}
\newcommand{\eproposition}{\end{proposition}}
\newcommand{\bdefinition}{\begin{definition}}
\newcommand{\edefinition}{\end{definition}}
\newcommand{\bcorollary}{\begin{corollary}}
\newcommand{\ecorollary}{\end{corollary}}
\newcommand{\bproof}{\begin{proof}}
\newcommand{\eproof}{\end{proof}}
\newcommand{\bclaim}{\begin{claim}}
\newcommand{\eclaim}{\end{claim}}
\newcommand{\bquestion}{\begin{question}}
\newcommand{\equestion}{\end{question}}
\newcommand{\bfact}{\begin{fact}}
\newcommand{\efact}{\end{fact}}
\newcommand{\bremark}{\begin{remark}}
\newcommand{\eremark}{\end{remark}}
\newcommand{\eexample}{\end{example}}
\newcommand{\bexample}{\begin{example}}
\newcommand{\elemma}{\end{lemma}}
\newcommand{\blemma}{\begin{lemma}}
\newcommand{\be}{\beta}
\newcommand{\N}{\mathbb{N}}
\newcommand{\eps}{\varepsilon}
\newcommand{\de}{\delta}
\title{Packing tetrahedrons in edge-weighted graphs }
\author{
Wanting Sun\thanks{Data Science Institute, Shandong University, Jinan, China. Email: {\tt wtsun@sdu.edu.cn}.},
Shunan Wei\thanks{School of Mathematics, Shandong University, Jinan, China. Email: {\tt snwei@mail.sdu.edu.cn}.},
Donglei Yang\thanks{School of Mathematics, Shandong University, Jinan, China. Email: {\tt dlyang@sdu.edu.cn}.}}
\date{\today}
\begin{document}
\maketitle
\linespread{1.2}
%\linenumbers

\begin{abstract}
 We prove that for all $\mu>0, t\in (0,1)$ and sufficiently large $n\in 4\N$, if $G$ is an edge-weighted complete graph on $n$ vertices with a weight function $w: E(G)\rightarrow [0,1]$ and the minimum weighted degree $\de^w(G)\ge (\tfrac{1+3t}{4}+\mu)n$, then $G$ contains a $K_4$-factor where each copy of $K_4$ has total weight more than $6t$. This confirms a conjecture of Balogh--Kemkes--Lee--Young for the tetrahedron case. 
\end{abstract}

%%%%%%%%%%%%%%%%%%%%%%%%%%%%%%%%%%%%%%%%%%%%%%%%%%%%%%%%%%%5

\section{Introduction}

Given two graphs $H$ and $G$, an $H$-tiling of $G$ is a collection of vertex-disjoint copies of $H$ in the graph $G$. A perfect $H$-tiling of $G$, or an $H$-factor, is an $H$-tiling that covers all vertices of $G$.
A fundamental research topic in Extremal Combinatorics is to determine sufficient minimum degree conditions forcing certain spanning structures, such as perfect matchings (i.e.,  $K_2$-factors), Hamilton cycles, $H$-factors, etc. {Textbook results of Hall and Tutte give a sufficient condition for the existence of a perfect matching.} A cornerstone theorem of Dirac~\cite{Dirac} states that every graph $G$ on $n\ge 3$ vertices with minimum degree $\delta(G)\ge \frac{n}{2}$ contains a Hamilton cycle, in particular if $n$ is even then $G$ has a perfect matching. The celebrated theorem of Hajnal and Szemer\'edi~\cite{HajnalSz} gives a best possible condition on the minimum degree which  guarantees the existence of a $K_r$-factor in a graph (the $r=3$ case was previously obtained by Corr\'adi and Hajnal~\cite{Corradi}). %\sout{For clique tilings, the seminal Hajnal--Szemer\'edi theorem~\cite{HajnalSz} characterizes the minimum degree threshold forcing a $K_k$-factor (the $r=3$ case was previously obtained by Corr\'adi and Hajnal~\cite{Corradi}).}

\begin{theorem}[Hajnal-Szemer\'edi theorem~\cite{HajnalSz}]\label{thm:HS}
Let $G$ be an $n$-vertex graph with $n\in r\mathbb{N}$. If $\delta(G)\ge \left(1-\frac{1}{r}\right)n$, then $G$ contains a $K_r$-factor.
\end{theorem}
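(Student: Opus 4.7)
The plan is to proceed by induction on $n$, combined with an extremal exchange argument on a maximum partial $K_r$-tiling. The base case $n=r$ is immediate since $\delta(G)\geq r-1$ forces $G=K_r$. For the induction step, I would suppose for contradiction that $G$ has no $K_r$-factor, fix a $K_r$-tiling $\mathcal{T}=\{T_1,\ldots,T_k\}$ with $k$ as large as possible, and let $U = V(G)\setminus\bigcup_i V(T_i)$ be the uncovered set, so $|U|\geq r$ and $G[U]$ contains no $K_r$.

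The first step is a reduction to a \emph{near-extremal} configuration in which $k=n/r-1$ and $|U|=r$. One achieves this by rotating vertices between $U$ and the cliques of $\mathcal{T}$ while preserving the maximality of $k$; the degree hypothesis $\delta(G)\geq(1-1/r)n$ guarantees that each $u\in U$ has a neighbor in all but at most $n/r - 1$ of the cliques, so swaps are always available as long as $|U|$ is too large. A further structural reduction, using the same degree bound together with the assumption that $G[U]$ is $K_r$-free, shows one may assume $U$ contains a $K_{r-1}$ together with one distinguished vertex $u\in U$ that has a non-neighbor in every clique of $\mathcal{T}$.

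The heart of the argument is then a chain of exchanges starting from $u$. For each $T_j=\{v_{j,1},\ldots,v_{j,r}\}$, call $T_j$ \emph{$u$-absorbing} if $u$ is adjacent to all but exactly one vertex $v_{j,i}$ of $T_j$; swapping $u$ into $T_j$ then produces a new $K_r$ and frees $v_{j,i}$ back into $U$. Iterating this, the freed vertex plays the role of $u$ and the process continues on the updated $U$. Building the auxiliary digraph on $\{T_1,\ldots,T_k\}$ whose arcs record possible absorbing swaps, one shows that the digraph is dense enough to contain either (i) a walk that terminates in a $K_r$-copy inside $U$, immediately producing a $(k+1)$-st clique, or (ii) a closed walk whose reversal can be unwound into a new $K_r$ disjoint from the remaining cliques. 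Either outcome contradicts maximality of $\mathcal{T}$.

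The main obstacle is forcing the exchange chain to close \emph{productively}, rather than entering a blocked configuration where every attempted swap destroys a previously-created $K_r$. The threshold $(1-1/r)n$ is exactly tight for this: any smaller degree permits Turán-type blow-ups of $K_{r-1}$ with no $K_r$-factor, and the proof must leverage this tightness to rule out bad closed configurations. Classically, Hajnal and Szemerédi handle this via a delicate case analysis with a parity argument on the color-class sizes of the equivalent equitable-coloring formulation applied to $\overline{G}$; Kierstead and Kostochka later gave a shorter version replacing the case analysis with a discharging argument on the auxiliary digraph. Either route, executing this final exchange analysis rigorously is the technical core of the proof and the reason the theorem is substantially deeper than the matching ($r=2$) or Corr\'adi-Hajnal ($r=3$) cases.
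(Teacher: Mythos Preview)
The paper does not prove this statement: Theorem~\ref{thm:HS} is quoted as background, attributed to Hajnal and Szemer\'edi, with the remark that a short proof was later given by Kierstead and Kostochka. There is therefore no proof in the paper to compare your proposal against.

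As to the proposal itself, what you have written is a plausible high-level narrative of the classical approach (maximum partial tiling, exchange/rotation arguments, auxiliary digraph, equitable-coloring reformulation), and you correctly flag that the real content lies in closing the exchange chain productively. But this is an outline, not a proof: the reductions you describe (``one achieves this by rotating vertices\ldots'', ``a further structural reduction\ldots shows one may assume\ldots'') are asserted rather than carried out, and the dichotomy (i)/(ii) on the auxiliary digraph is stated without any mechanism to force it. Since the theorem is not proved in the present paper anyway, this is moot for the comparison task; if you actually need a proof, the Kierstead--Kostochka argument is the cleanest reference to follow in detail.
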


A short proof was later provided by Kierstead and Kostochka~\cite{Kiersteadk2008}. 
%Note that the minimum degree condition of the theorem is tight due to the complete $r$-partite graph with $\frac{n}{r}-1$ vertices in one part, $\frac{n}{r}+1$ vertices in another and $\frac{n}{r}$ vertices in each of the remaining $r-2$ parts.
For an arbitrary graph $H$, K\"uhn and Osthus~\cite{Kuhn2009} determined the minimum degree condition that ensures an 
$H$-factor. % characterization of the minimum degree condition was ultimately settled by .
  For more results concerning different graphs $H$, see the nice survey
 by K\"{u}hn and Osthus~\cite{Kuhnsurvey2009}.

Although the theorem of Hajnal-Szemer\'edi was established decades ago, recent years have witnessed remarkable progress in its extensions and variants, such as partite graphs, directed graphs and hypergraphs.
In this paper, we primarily focus on a weighted variant of the Hajnal-Szemer\'edi theorem, specifically in the setting of edge-weighted graphs.

\subsection{Weighted Hajnal--Szemer\'{e}di}

Let $G=(V,E,w)$ be an \textit{edge-weighted graph} on the vertex set $V$ together with an edge weighting $w: E\rightarrow [0,1]$. %A $k$-vertex edge-weighted graph can be regard as a large $k$-partite graph $G'$ where each part $V_i$ is a blow up of the corresponding vertex $v_i$ and each bipartite subgraph $G'[V_i,V_j]$ is a graph of density $w(v_iv_j)$.
Balogh, Kemkes, Lee and Young~\cite{BKLY}  proposed investigating a variant of Hajnal-Szemer\'{e}di Theorem in edge-weighted graphs, asking for the minimum (weighted) degree thresholds for perfect heavy $K_r$-tilings. To illustrate this, given an \textit{edge-weighted graph} $G=(V,E,w)$, we define the \textit{weighted degree} of any vertex $v\in V$ to be $d^w(v):=\sum_{uv\in E}w(uv)$, and denote by $\de^w(G):=\min\{d^w(v):v\in V\}$ the minimum weighted degree of $G$. For a constant $t\in (0,1),$ we say a copy of $K_r$ in $G$ is \textit{$t$-heavy} if the weight sum over all the $\binom{r}{2}$ edges from the copy is strictly more than $\binom{r}{2}t$. 
\begin{conjecture}[\cite{BKLY}]\label{conj: main conj}
For any $\mu>0,\,r\ge 2,\,t\in (0,1)$ and any sufficiently large $n\in r\mathbb{N}$, if $G$ is an $n$-vertex {edge-}weighted complete graph with $\de^w(G)\ge \left(\frac{1}{r}+(1-\frac{1}{r})t+\mu\right)n$, then $G$ contains a $t$-heavy $K_r$-factor.
\end{conjecture}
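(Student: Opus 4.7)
My plan is to use the absorbing method, which has become standard for such tiling problems. I would first build a small absorbing set $A \subseteq V(G)$ of size $o(n)$ with the property that for every vertex subset $L$ disjoint from $A$, with $|L| \leq \varepsilon n$ and $|A \cup L| \in r\mathbb{N}$, the induced subgraph $G[A \cup L]$ admits a $t$-heavy $K_r$-factor. Then I would produce an almost-perfect $t$-heavy $K_r$-tiling of $G - A$ leaving out only $o(n)$ vertices, and finally absorb those leftovers using $A$. To produce the almost-perfect tiling I would split into two regimes: a \emph{non-extremal} case handled through a regularity/blow-up framework combined with fractional tiling, and an \emph{extremal} case where $G$ is close to the tight construction described below, handled directly.

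\textbf{The tight construction and its role.} The threshold $\tfrac{1+(r-1)t}{r}$ is witnessed by the following weighted graph: partition $V = V_1 \cup V_2$ with $|V_1| = n/r - 1$ and $|V_2| = (r-1)n/r + 1$, set $w(e) = t$ for every edge contained in $V_2$ and $w(e) = 1$ otherwise; every $K_r$-factor must contain at least one $K_r$ lying entirely in $V_2$, whose total weight is exactly $\binom{r}{2}t$, not strictly greater. This suggests that the extremal regime of our argument is governed by proximity to such an $(r-1)$-partition-like weight configuration, and a stability analysis should surface this structure.

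\textbf{Absorbing lemma.} For each $r$-set $S \subseteq V(G)$, the target is to produce $\Omega(n^{c})$ absorbers --- bounded-size vertex sets $L$ such that both $G[L]$ and $G[L \cup S]$ admit $t$-heavy $K_r$-factors. The key auxiliary step is to show that for any $(r-1)$-set $T$ the number of vertices $v$ completing $T$ to a $t$-heavy $K_r$ is $\Omega(n)$; this follows from the minimum weighted degree hypothesis by averaging the edge-weight sums over common neighbourhoods and applying a convexity estimate. Iterating, I would build standard ``swap gadgets'' consisting of two $r$-cliques sharing $r-1$ vertices, whose four relevant configurations (two with $S$, two without) are all simultaneously $t$-heavy. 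A random sampling argument of R\"odl--Ruci\'nski--Szemer\'edi type then assembles these into an absorbing set $A$ of the required size.

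\textbf{Almost-perfect tiling and main obstacle.} The hardest ingredient is the almost-perfect tiling. I would apply Szemer\'edi's regularity lemma, refining each regular pair by its weight profile, and then use a fractional $K_r$-tiling of the resulting weighted reduced graph to extract a $K_r$-tiling of $G$ in which almost every copy is $t$-heavy. The crucial enabler is a \emph{stability} dichotomy: either $G$ admits a $t$-heavy $K_r$-tiling covering $(1-o(1))n$ vertices, or $G$ is structurally close to the tight construction, in which case a direct extremal-case analysis finishes the proof. I expect the main obstacle to be precisely this stability/extremal analysis: for general $r$ the space of near-tight weight configurations is much richer than in the unweighted Hajnal--Szemer\'edi setting, because ``light'' edges may be distributed across several weight classes rather than forming a single dense part, and controlling these fractional bad configurations without eroding the tight threshold $\tfrac{1+(r-1)t}{r}$ will require a delicate interplay between edge weights and clique weights --- the same interplay that already makes the $r=4$ case proved in this paper substantial.
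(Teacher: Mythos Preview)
The statement you are trying to prove is Conjecture~\ref{conj: main conj}, which is \emph{open} for $r\ge 5$; the paper only establishes the case $r=4$ (Theorem~\ref{main thm}). So your proposal is not to be compared against a proof in the paper --- it is a proposed attack on an unsolved problem, and the paper's concluding remarks explicitly flag the obstructions.

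There is a concrete gap in your absorbing step. You claim that ``for any $(r-1)$-set $T$ the number of vertices $v$ completing $T$ to a $t$-heavy $K_r$ is $\Omega(n)$'' and that this ``follows from the minimum weighted degree hypothesis by averaging \ldots\ and applying a convexity estimate''. This is false for arbitrary $T$: if $w(T)$ is small (say all edges in $T$ have weight $0$) and $t\ge 2/r$, then no vertex can complete $T$ to a $t$-heavy $K_r$ at all, since even weight-$1$ edges from $v$ to $T$ contribute only $r-1<\binom{r}{2}t$. The averaging argument in the paper (Proposition~\ref{prop: heavy triangle}\ref{quarter1}) requires $T$ to already be a heavy $K_{r-1}$. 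Consequently your ``swap gadget'' construction --- two heavy $K_r$'s sharing a common $(r-1)$-face --- is exactly the $1$-reachability property, and the paper's Section~5 states that for $r=5$ one cannot, under $\delta^w(G)\ge(\tfrac{1+4t}{5}+o(1))n$, guarantee a reachable pair inside \emph{any} finite vertex set. Your absorbing lemma therefore does not go through as written; for $r=4$ the paper needs the delicate Lemma~\ref{lem: two reachable from three} (two out of every three vertices are $1$-reachable) plus a lattice/transferral merging argument (Lemmas~\ref{lem: partition lem}--\ref{lem: merge lem}) in the weighted reduced graph, none of which you mention.

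Your almost-cover plan is also underspecified in a way that matters. Passing to the reduced graph and seeking a fractional heavy $K_r$-tiling there is essentially the same problem one level down, so without an independent fractional result you are arguing in a circle. The paper's almost-cover (Lemma~\ref{almost}) avoids regularity entirely: it runs a greedy selection maximising the tuple $(|\mathcal{R}|,|\mathcal{T}|,|M|,\rho)$ lexicographically, crucially restricting $\mathcal{R}$ to heavy $K_4$'s that contain at least two heavy triangles and two heavy edges (via Proposition~\ref{prop: heavy}), and then performs local swap arguments. The paper notes that even this augmentation technique becomes ``considerably complicated for larger $r$''. Your stability/extremal dichotomy is a reasonable heuristic, but you have not identified what the stable structure is in the weighted setting beyond the single extremal example, and as you yourself concede, the space of near-extremal weightings for general $r$ is genuinely richer.
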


The minimum weighted degree is asymptotically best possible for every $t\in (0,1)$, as shown by the following example provided in~\cite{BKLY}.
Let $n\in r\mathbb{Z}$ with $n>r$. Let $U\subseteq V(K_n)$ be an arbitrary subset of size exactly $\frac{(r-1)n}{r}+1$ and $W$ be the remaining $\frac{n}{r}-1$ vertices. Consider the weight function $w$ that assigns weight $t$ to edges whose both endpoints are in $U$, and weight $1$ to all other edges. It is routine to check that $\de^w(K_n)=t\cdot(|U|-1)+1\cdot |W|= \left(\frac{1}{r}+(1-\frac{1}{r})t\right)n-1$. Notice that any $r$ vertices in $U$ cannot induce a $t$-heavy $K_r$, that is, every $t$-heavy $K_r$ in $G$ must intersect $W$. As $|W|=\frac{n}{r}-1$, such an edge-weighted complete graph does not contain a $t$-heavy $K_r$-factor. 

Balogh, Kemkes, Lee and Young~\cite{BKLY} determined that the minimum weighted degree condition guaranteeing a $t$-heavy $K_2$-factor is $\frac{1+t}{2}$. Notably, for $r\ge 3$, they proved that when $t$ is sufficiently small, the condition $\delta^w(K_n) \geq \left(\frac{1}{r} + \left(1-\frac{1}{r}\right)t\right)n$ suffices for the existence of a $t$-heavy $K_r$-factor. %" if $t$ is small enough, $\de^w(K_n)\ge \left(\frac{1}{r}+(1-\frac{1}{r})t\right)n$ is sufficient for the existence of a $t$-heavy $K_r$-factor.
%\blue{Note that the condition that $t$ be sufficiently small is imposed because their proof substitutes a $t$-heavy $K_r$ with a single edge whose weight is more that $t\binom{r}{2}$.}
Later on, Balogh, Molla and Sharifzadeh~\cite{BMS} confirmed the triangle case of Conjecture~\ref{conj: main conj} for all $t\in (0,1)$.
Our main result confirms Conjecture~\ref{conj: main conj} for the next open case (i.e., $r=4$). % is to prove Conjecture~\ref{conj: main conj} for $r=4$.

\begin{theorem}[Main theorem]\label{main thm}
For all $\mu>0,t\in (0,1)$ and sufficiently large $n\in 4\N$, if $G$ is an $n$-vertex edge-weighted complete graph with $\de^w(G)\ge (\tfrac{1+3t}{4}+\mu)n$, then $G$ contains a $t$-heavy $K_4$-factor.
\end{theorem}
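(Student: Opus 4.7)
My approach follows the now-standard scheme combining the absorbing method with a stability dichotomy, adapted to the edge-weighted setting — this is the blueprint Balogh, Molla and Sharifzadeh used for the triangle version (the $r=3$ case of Conjecture~\ref{conj: main conj}). Pick auxiliary constants $\alpha\ll \mu$. I split into two cases according to whether $G$ is \emph{$\alpha$-extremal}, meaning there is a partition $V(G)=U\cup W$ with $|U|=\tfrac{3n}{4}\pm\alpha n$ and all but $\alpha n^2$ edges inside $U$ of weight at most $t+\alpha$, or $G$ is \emph{$\alpha$-non-extremal}. The non-extremal case is handled by absorption; the extremal case, by a direct combinatorial construction.

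\emph{Non-extremal case.} The first step would be an absorption lemma: for every $4$-set $Q\subseteq V(G)$, the number of $s$-sets $S$ (with $s$ a bounded absorber size) such that both $G[S]$ and $G[S\cup Q]$ admit a $t$-heavy $K_4$-factor is $\Omega(n^s)$. A standard probabilistic selection then yields a small \emph{absorbing set} $A\subseteq V(G)$ with $|A|=o(n)$ that can absorb any leftover $4$-divisible set of size at most $\mu n/10$. The second step is to cover $V(G)\setminus A$ by a $t$-heavy $K_4$-tiling missing only $o(n)$ vertices; I plan to use a weighted regularity lemma together with Theorem~\ref{thm:HS} applied to a reduced graph whose edges correspond to pairs of clusters supporting many heavy $K_4$-connections. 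The non-extremality hypothesis is used precisely here to rule out the natural obstruction. Absorbing the leftover set into $A$ then yields the desired $t$-heavy $K_4$-factor.

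\emph{Extremal case.} Here I would exploit the structural decomposition $V(G)=U\cup W$. Each $v\in U$ has weighted degree at least $(\tfrac{1+3t}{4}+\mu)n$, while its edges to $U$ contribute at most $(t+\alpha)(|U|-1)+o(n)$; since $\mu\gg\alpha$, this forces $v$ to be incident to many ``extra heavy'' edges either inside $U$ or, crucially, into $W$. Since $|W|\le n/4+\alpha n$, the $\mu n$ slack comfortably exceeds the excess, and a matching/augmentation argument pairs each $W$-vertex with a suitably chosen $3$-set in $U$ so that the resulting $K_4$ is $t$-heavy, producing the required factor.

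\emph{Main obstacle.} The crux is the absorption lemma. For $K_4$ the gadgets are significantly more intricate than in the triangle case: a minimal absorber for a $4$-tuple $Q=\{x_1,x_2,x_3,x_4\}$ will typically consist of two $t$-heavy $K_4$'s overlapping in a vertex or edge, arranged so that the ``exchange'' swapping $Q$ into the configuration preserves $t$-heaviness of every copy simultaneously. Establishing $\Omega(n^s)$ such gadgets from only the weighted minimum-degree condition demands a careful case analysis on the weight profile of $Q$ and of the weighted neighbourhoods of its vertices, since heaviness budgets in several interacting $K_4$'s must be balanced at once. A secondary technical point is verifying the extremal case uniformly for $t$ close to $1$, where the slack in the weight budget at $W$-vertices is tightest and the pigeonhole threshold $|W|=n/4-1$ in the extremal construction is only just overcome.
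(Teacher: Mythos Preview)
Your high-level framework (absorption plus almost-cover) matches the paper's, but the paper uses \emph{no} stability dichotomy, and your proposed almost-cover step has a genuine gap. You plan to apply Theorem~\ref{thm:HS} to an unweighted reduced graph ``whose edges correspond to pairs of clusters supporting many heavy $K_4$-connections''; for this to work you would need both that every $K_4$ in this auxiliary graph lifts to heavy $K_4$'s in $G$, and that its minimum degree is at least $\tfrac{3}{4}k$. There is no natural threshold achieving both. For instance, with $t=\tfrac13$ (so $\tfrac{1+3t}{4}=\tfrac12$), a cluster can have weight-$1$ density to half the other clusters and weight-$0$ density to the rest: it meets the weighted degree hypothesis but has only $k/2$ ``heavy'' partners, and this configuration is not $\alpha$-extremal in your sense. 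More generally, the weighted-degree condition on the reduced graph is exactly the hypothesis of the theorem you are trying to prove, so routing through regularity here is circular. The paper's Lemma~\ref{almost} instead argues directly: take a maximal family $\mathcal{R}$ of heavy $K_4$'s \emph{each containing at least two heavy triangles and two heavy edges}, then heavy triangles $\mathcal{T}$, then a heavy matching $M$, maximising $(|\mathcal{R}|,|\mathcal{T}|,|M|,\rho)$ lexicographically; the structural observations of Proposition~\ref{prop: heavy} then yield augmenting contradictions that bound the leftover by $O(1/\mu)$.

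On absorption you correctly identify the crux, but the paper does not proceed by direct gadget-counting and the authors explicitly remark that the triangle-case argument does not extend under the weaker $K_4$ degree condition. The key new idea is Lemma~\ref{lem: two reachable from three}: among any three vertices, some two are $(K_4,\beta n,1)$-reachable. This yields a partition of $V(G)$ into at most two closed parts (Lemma~\ref{lem: partition lem}) plus a tiny set $B$; a transferral argument in a multicoloured weighted reduced graph (Lemmas~\ref{lem: merge lem} and~\ref{lem: embedding lem}, exploiting Proposition~\ref{prop: heavy} again) then merges the two parts into a single closed set, from which absorbers are built via Lemma~\ref{lem: absorber lem}. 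Your anticipated ``case analysis on the weight profile of $Q$'' is precisely what this reachability route avoids.
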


\noindent{\bf Related work.}
Considerable attention has recently been focused on extremal problems concerning edge-weighted graphs, especially in the study of Ramsey-Tur\'an problems.
The \textit{Ramsey-Tur\'an number} $RT_p(n,K_q,\eps n)$ is the maximum number of edges in an $n$-vertex $K_q$-free graph $G$ with $\alpha_p(G)\le\eps n$,  where $\alpha_p(G)$ %the $p$-independence number 
denotes the size 
of a largest $K_p$-free induced subgraph of $G$.  %$\alpha_p(G):=\max\{|U|: U\subseteq V(G)~\text{and $G[U]$ is $K_p$-free}\}$. 
The \textit{Ramsey-Tur\'an density} is defined as \[
\rho_p(q):=\lim_{\eps\to 0}\lim_{n\to \infty}\frac{RT_p(n,K_q,\eps n)}{\binom{n}{2}}.
\]
This type of problem was initiated in 1969 by S\'{o}s \cite{Sos} and later generalised by Erd\H{o}s and S\'{o}s \cite{Erdos-Sos}, who determined the value of $\rho_2(q)$ for all odd $q$. Since then, determining Ramsey-Tur\'{a}n densities for various parameters $p$ and $q$ has become a central problem in this field.

As is common in such problems, an application of Szemer\'edi's  regularity lemma allows us to reduce the original problems to embedding certain structures in the corresponding edge-weighted reduced graph. 
%By a standard application of Szemer\'edi's regularity lemma, our problems usually reduce to embedding certain structures in the corresponding edge-weighted reduced graph. 
For instance, Balogh, Brada\v{c} and Lidick\'y~\cite{Balogh2025} recently reformulates the Ramsey-Tur\'{a}n problem as a clique embedding problem in edge-weighted reduced graphs. They further identified a key theoretical barrier: ``The main obstacle for obtaining further upper bounds on Ramsey-Tur\'{a}n densities has been a lack of such weighted Tur\'{a}n results". 
On the other hand, extremal problems in the weighted graph are closely related to multigraph extremal problems. For instance, if we consider a multigraph with multiplicities $1$ and $2$, it can be represented by an edge-weighted graph with the weights of $\frac{1}{2}$ and $1$, respectively. Similar strategies have been adopted in recent developments for example~\cite{Chang2023, Chen2024,gaojiang, Knierim2020, Nenadov2020}. 

%\red{Gao, Jiang, Liu and Sankar~\cite{gaojiang} demonstrated the equivalence between generalized Ramsey-Tur\'an density and the clique density in weighted graphs (with both vertex and edge weights), that is, $\rho_p(q)$ can be determined by the maximum possible $K_p$-density in a weighted graph avoiding some special structures. 
%In a vertex-edge-weighted graph, we independently and uniformly choose $p$ vertices according to the vertex weight. The $K_p$ density in the weighted graph is defined as the expectation of the products of the weights of all pair whose corresponding indices are adjacent in $K_p$. 

%For every $e\in E(K_n)$, set $w(e)=t$ if both endpoints of $e$ are in $U$ and otherwise set $w(e)=1$.
%There is no $\frac{n}{4}$ vertex-disjoint $t$-heavy $K_4$s since the vertices in $W$ will provide at most $\frac{n}{4}-1$ vertex-disjoint $K_4$s. 
%Furthermore, we have 

% For our problem, the ``cover" property is a necessary condition. In order to have a $t$-heavy $K_4$-factor in $G$, our primary objective is to break the \textit{cover threshold}: every vertex $v\in V(G)$ must be covered by a copy of $t$-heavy $K_4$ in $G$. The cover threshold has been proved by Balogh, Kemkes, Lee and Young in~\cite{BKLY}.
\vspace{2mm}
\noindent{\bf Notation.} 
Throughout the paper we follow the standard graph-theoretic notation~\cite{diestel2017graph}. Let $G=(V,E,w)$ be an edge-weighted graph and $U\subseteq V$ be a vertex subset. Denote by $G[U]$ the induced edge-weighted
subgraph of $G$ on $U$. The notation $G-U$ is used to denote the induced weighted  subgraph after removing $U$,
that is $G-U:=G[V\setminus U]$. Given two disjoint vertex subsets $A,B\subseteq V(G)$, we use $G[A,B]$ to denote the edge-weighted bipartite subgraph of $G$ with vertex set $A\cup B$ and edge set consisting of all edges having one endpoint in $A$ and the other in $B$. The total weights of edges in $G[A,B]$ is denoted by $w(A,B)$. When $A$ and $B$ are subgraphs of $G$, we always  abbreviate $G[V(A),V(B)]$ and 
$w(V(A),V(B))$ as $G[A,B]$ and 
$w(A,B)$ respectively; when $A=B$, we abbreviate $w(A,B)$ as $w(A)$.  For any subgraph  defined above, the edge weights are preserved from the original graph 
$G$.
%For a constant $t>0$ we say a copy of $K_r$ in $G$ is \textit{$t$-heavy} if the weight sum over all the $\binom{r}{2}$ edges from the copy is strictly more than $\binom{r}{2}t$.
%In particular, when $A=\{u\}$ is a singleton, we simplify notation by writing $w(u,B)$ instead of $w(\{u\},B)$. %Given a vertex $v\in V$ and $X\subseteq V$, denote by $N_X(v)$ the set of neighbors of $v$ in $X$ and let $d_X(v):=|N_X(v)|$. In particular, we write $N_G(v)$ for the set of neighbors of $v$ in $G$. We omit the index $G$ if the graph is clear from the context.

Given a set $U$ 
and an integer $k$, we write
$\binom{U}{k}$ the collection of all 
$k$-element subsets of $U$. For any integers $a\leq b$, define $[a,b]:=\{i\in \mathbb{N}:a\leq i\leq b\}$.
  When we write  $\alpha\ll \beta\ll \gamma$, we always mean that $\alpha, \beta, \gamma$  are constants in $(0,1)$, and $\alpha\ll \beta$ means that there exists $\alpha_0=\alpha_0(\beta)$ such that the subsequent arguments hold for all $0<\alpha\leq \alpha_0$. 
Hierarchies of other lengths are defined analogously.

For simplicity, we will often abbreviate $t$-heavy subgraphs as heavy subgraphs. In what follows, we consider only edge-weighted graphs; thus, the term ``weighted graph" will always refer to an edge-weighted graph unless stated otherwise.

\subsection{Proof overviews}
Our proof primarily adopts the framework of the absorption method proposed by R\"odl, Ruci\'nski and Szemer\'edi~\cite{RRS2009}. % Note that the absorption method is successfully used to solve Ramsey-Tur\'an tiling problems~\cite{Han2024,Knierim2020,Nenadov2020}.% to prove the existence of the perfect matchings in hypergraphs.
% Prior to its development, the only known systematic approach is the blow-up lemma established by  Koml\'os, S\'ark\"ozy and Szemer\'edi~\cite{Blow-up}. %In the standard absorbing method, a key step in constructing an absorbing set is to show that there exists a constant $k$ such that for every $4$-set, the host graph $G$ contains $\Omega(n^{4k})$ $4k$-sets absorbers. However, a much weaker counting argument pioneered by the work of Balogh, Molla and Sharifzadeh~\cite{BMS}, that is, to guarantee the existence of an absorbing set, it suffices to show that there are $\Omega(n)$ vertex-disjoint absorbers in graph $G$ for every set of $4$ vertices. 
This method provides an efficient framework for constructing spanning subgraphs.
The absorption method typically decomposes the problem of finding perfect tilings into two parts. The first part involves constructing an absorbing set $A$ in the host weighted complete graph $G:=K_n$, which can `absorb' any small set of `remaining' vertices. The second part is to find an almost perfect $t$-heavy $K_4$-tiling covering almost all vertices in $G-A$.

%The general idea of absorption is to split the problem of finding perfect tilings into two tasks. The first major task is to define and find an absorbing set in the host graph which can `absorb' remaining  vertices \blue{(only a few)}.
\vspace{2mm}
\noindent{\bf Absorbing.} 
In order to construct an absorbing set, we adopt a strategy of Nenadov and Pehova~\cite{Nenadov2020} in the edge-weighted setting to transform our construction into finding linearly many vertex-disjoint absorbers for every four vertices. This is usually achieved by the reachability argument as for example in \cite{Lo2015, Nenadov2020}.
In particular, for the triangle case of Conjecture~\ref{conj: main conj}, Balogh, Molla and Sharifzadeh~\cite{BMS} were able to show that if $\de^w(G)\ge (\tfrac{1+2t}{3}+o(1))n$, then every two vertices in $V(G)$ are $1$-reachable (to be defined later), and this together with a standard construction (see Lemma~\ref{lem: absorber lem}) guarantees $\Omega(n)$ vertex-disjoint absorbers for every three vertices. 
However, it is quite difficult to extend this to the tetrahedron case under a much weaker degree condition $\de^w(G)\ge (\tfrac{1+3t}{4}+o(1))n$. 
To address this, we establish a technical lemma (see Lemma~\ref{lem: two reachable from three}) which tells that
\begin{quote}
\textit{for every three vertices, there exist two vertices that are $1$-reachable} .
\end{quote}
This intuitively means that almost all vertices can be $1$-reachable to many other vertices in $V(G)$. In this case, our constructions carry on with a partition $V(G)=B\cup U$, where $|B|=o(n)$ and every pair of vertices in $U$ are $32$-reachable (see Lemma \ref{main lemma}), and this is the bulk of our proof. 

%We first show that $V(G)$ can be partitioned into at most two closed vertex subsets $V_1\cup V_2$, i.e., every pair of vertices in the same subset are reachable. 
For the proof of Lemma \ref{main lemma}, we employ a merging process by combining the transferral detecting (see Lemma~\ref{lem: merge lem}) with many new geometric perspectives on heavy tetrahedrons. Roughly speaking, we shall first show that $V(G)$ can be partitioned into vertex subsets $V_1, V_2$, such that every pair of vertices in the same subset are $4$-reachable (see Lemma \ref{lem: partition lem}). It remains to show that vertices from distinct parts are reachable, so as to merge $V_1$ and $V_2$ into a closed set as desired. Lemma~\ref{lem: merge lem} reduces this to detecting the existence of transferrals (see Definition~\ref{lattice}), that is, to find two different classes of heavy tetrahedrons with prescribed distributions (in $V_1$ and $V_2$). In doing this, we first utilize a multicolored version of the regularity method and use an embedding lemma (see Lemma \ref{lem: embedding lem}) to embed linearly many disjoint heavy tetrahedrons of certain types. An advantage for this is that it suffices to find one (instead of many) heavy tetrahedron as required in the corresponding weighted reduced graph. We also bring some insights of heavy tetrahedrons (see Proposition~\ref{prop: heavy}). Note that the $1$-reachability between any two vertices means the existence of two heavy tetrahedrons sharing a common triangular face. In this way, Proposition~\ref{prop: heavy} tells that every heavy tetrahedron constructed as in \ref{per1} contains two heavy triangles sharing an edge, while Proposition~\ref{prop: heavy triangle} \ref{quarter2} enables you to extend them to two heavy tetrahedrons sharing a common triangle. This essentially guides us to find transferrals by checking various tetrahedrons as in \ref{per1} (in the reduced graph) which intersect with both $V_1$ and $V_2$. We remark that our new perspectives (see Proposition~\ref{prop: heavy} and the remark afterwards) may shed light on a full resolution of Conjecture~\ref{conj: main conj}.
%\blue{There are some other technicalities in the proof.  %The minimum weighted degree condition guarantees that for any $4$ vertices, there exist at least $o(n)$ vertices such that the total edge weight from each to these $4$ vertices is at least $1+3t$.  Proposition~\ref{prop: heavy} (i) guarantees the existence of two adjacent heavy base faces in a tetrahedron. Hence, for two $t$-heavy tetrahedra sharing a common triangular face with two heavy base faces, if such a vertex serves as the common apex, then the two vertices not lying on the common triangular face are $1$-reachable. 

\vspace{2mm}
\noindent{\bf Almost cover.} 
Following the previous framework in~\cite{BKLY} for the triangle case, our construction of almost perfect heavy $K_4$-tiling becomes surprisingly challenging and involved. This is where our new perspectives (in Proposition~\ref{prop: heavy}) on heavy tetrahedrons come into play. We first greedily select a maximal collection of vertex-disjoint heavy $K_4$s, denoted as $\mathcal{R}$. Here, the key difference is that we employ a technical selection as follows:  
\begin{quote}
\textit{every copy of $K_4$ from $\mathcal{R}$ must contain at least  two heavy triangles and two heavy edges. }
\end{quote}
%Our second task is to establish an almost $t$-heavy $K_4$-tiling. We collect maximum number of vertex-disjoint $t$-heavy $K_4$ which contains at least two heavy triangles and two heavy edges. 
For the remaining vertices, we greedily choose a maximal family $\mathcal{T}$ of vertex-disjoint heavy triangles, followed by a maximal matching $M$ of heavy edges in what remains. Throughout this process, the selection is made to maximize 
the tuple $(|\mathcal{R}|,  |\mathcal{T}|,|M|,\rho)$ lexicographically, where $\rho$ denotes the total weights of all heavy $K_4$ in $\mathcal{R}$.  %The remaining vertices can be categorized as: (1) heavy triangles, (2) heavy edges, and (3) isolated vertices, ordered by precedence. 
By viewing $K_4$ as a tetrahedron and careful analysis on heavy faces of each tetrahedron in $\mathcal{R}$, we show that actually $\mathcal{R}$ covers almost all vertices, as otherwise we can augment the tuple $(|\mathcal{R}|,  |\mathcal{T}|,|M|,\rho)$ in various cases, yielding a contradiction.

\vspace{2mm}
\noindent{\bf Organization.} The rest of the paper is organized as follows. In the next section, we shall
introduce an absorbing lemma (Lemma \ref{abs}) and an almost perfect tiling lemma (Lemma \ref{almost}) for proving Theorem \ref{main thm}. In Section 3, we shall prove Lemma \ref{almost}. Section 4 will briefly present some necessary results and tools to introduce the lattice-based absorbing method and how they come together to prove Lemma \ref{abs}.

\section{Proof of Theorem \ref{main thm} and Preliminaries}
\subsection{Proof of Theorem \ref{main thm}}

We will start with the notion of absorbers needed in our proof.
\begin{defn}[Absorber]\label{defabs}
Let $H$ be an $h$-vertex graph and $G$ be an $n$-vertex weighted  graph.
For any $h$-set $S\subseteq V(G)$, any integer $k$ and $t\in (0,1)$, we say that a set $A_S\subseteq V(G)\setminus S$ is an \textit{$(H,k)$-absorber} for $S$  if $|A_S|\le h^2k$ and both $G[A_S]$ and $G[A_S\cup S]$ contain $t$-heavy $H$-factors.
\end{defn}

\begin{defn}[Absorbing set]
Let $H$ be an $h$-vertex graph, $G$ be an $n$-vertex weighted  graph, $\xi$ be a positive  constant and $t\in (0,1)$.
A set $A\subseteq V(G)$ is called an $(H,\xi)$-\textit{absorbing set} (for $V(G)$) if for any set $R\subseteq V(G)\setminus A$ with $|R|\le \xi n$ and $|A\cup R|\in h\mathbb{N}$, then graph $G[A\cup R]$ contains a $t$-heavy $H$-factor.
\end{defn}
Our first task is to construct an absorbing set. %handled as follows.
\begin{lemma}[Absorbing lemma]\label{abs}
For any constants $\mu>0,\,t\in(0,1)$, there exist $\gamma,\,\xi>0$ and $n_0\in \mathbb{N}$ such that the following statement holds. %such that for any sufficiently large $n$, 
If $G$ is a weighted  complete graph with $n\geq n_0$ vertices and $\de^w(G)\ge (\tfrac{1+3t}{4}+\mu)n$, then $G$ contains a $(K_4,  \xi)$-absorbing set of size at most $\gamma n$.
\end{lemma}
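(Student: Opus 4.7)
The plan is to implement the lattice-based (Lo--Markstr\"om / Nenadov--Pehova) absorbing framework in the heavy $K_4$ setting. First I would reduce the construction of the global absorbing set to the construction of many vertex-disjoint \emph{local} absorbers: for every $4$-set $S \subseteq V(G)$ there should exist at least $\Omega(n)$ pairwise disjoint sets $A_S$ of size bounded by $16k$ such that both $G[A_S]$ and $G[A_S \cup S]$ carry heavy $K_4$-factors. Once this linear supply is available uniformly in $S$, a standard probabilistic sampling with sampling parameter of order $\gamma$ produces a set $A$ with $|A| \leq \gamma n$ which with high probability retains $\Omega(n)$ private absorbers for every $4$-set; any leftover $R$ with $|R| \leq \xi n$ can then be absorbed four vertices at a time.

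The quantity powering this reduction is $\beta$-reachability: a pair $u, v$ is $\beta$-reachable if there are at least $\beta n^{4\beta - 1}$ sets $T$ of size $4\beta - 1$ such that both $G[T \cup \{u\}]$ and $G[T \cup \{v\}]$ admit a heavy $K_4$-factor. The critical input is Lemma \ref{main lemma}, producing a partition $V(G) = B \cup U$ with $|B| = o(n)$ and every pair in $U$ being $32$-reachable. Given such a $U$, a local absorber for an arbitrary $4$-set $S = \{s_1, s_2, s_3, s_4\}$ is built by choosing a heavy $K_4$ on $\{v_1, v_2, v_3, v_4\} \subseteq U$ which is configured favorably relative to $S$, and stitching it to the $32$-reachability witnesses between each $s_i$ and $v_i$. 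Vertices of $B$ are handled separately by absorbing them in triples or quadruples using Lemma \ref{lem: two reachable from three}, which guarantees that among any three vertices two are already $1$-reachable, so no vertex of $B$ is left totally isolated.

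To establish Lemma \ref{main lemma} I would proceed in two stages. The first stage is Lemma \ref{lem: partition lem}, which coarsens reachability into a partition $V(G) = V_1 \cup V_2$ in which every in-part pair is $4$-reachable: starting from the two-out-of-three $1$-reachability input, one iteratively merges reachability classes, and the weighted degree condition $\delta^w(G) \geq (\tfrac{1+3t}{4} + \mu)n$ forbids more than two substantial classes. The second stage merges $V_1$ with $V_2$ using the transferral machinery of Lemma \ref{lem: merge lem}: it suffices to exhibit two small heavy $K_4$-tilings whose indicator vectors with respect to $(V_1, V_2)$ differ by one unit. To find such tilings I would regularise $G$ via a multicoloured regularity lemma, apply Lemma \ref{lem: embedding lem} to embed a heavy $K_4$ straddling $V_1$ and $V_2$ in the weighted reduced graph, and then invoke the geometric statements Proposition \ref{prop: heavy} and Proposition \ref{prop: heavy triangle} to upgrade this to a pair of heavy tetrahedrons sharing a common heavy triangle yet differing in one vertex placed on opposite sides of $(V_1, V_2)$, which is precisely the transferral required.

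The principal obstacle is the second stage. Under the weak density $\tfrac{1+3t}{4} + \mu$ the host graph can resemble the extremal example very closely, so forcing a transferral across $(V_1, V_2)$ is delicate and cannot be accomplished by any purely density-based counting; it genuinely needs the combinatorial geometry of heavy tetrahedrons encoded in Proposition \ref{prop: heavy}, namely that every heavy $K_4$ obtained via the controlled construction contains two heavy triangles meeting in an edge, which combined with Proposition \ref{prop: heavy triangle} permits the vertex swap across the partition without sacrificing heaviness. All remaining components---probabilistic selection of local absorbers, treatment of the small exceptional set $B$, and the partition stage---are comparatively routine once the reachability toolkit and the embedding lemma of the paper are in hand.
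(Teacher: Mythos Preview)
Your overall strategy matches the paper's approach closely: you correctly invoke Lemma~\ref{main lemma} for the partition $V(G)=B\cup U$, Lemma~\ref{lem: absorber lem} to convert closedness of $U$ into linearly many local absorbers, and Lemma~\ref{lem: abs set} to assemble the absorbing set; and your sketch of how Lemma~\ref{main lemma} is proved (partition into at most two closed classes via Lemma~\ref{lem: partition lem}, then merge via transferrals found in the reduced graph using Lemma~\ref{lem: embedding lem} and Propositions~\ref{prop: heavy}--\ref{prop: heavy triangle}) is exactly the paper's route.

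The one genuine gap is your treatment of the exceptional set $B$. You propose to ``absorb $B$ in triples or quadruples using Lemma~\ref{lem: two reachable from three}'', but that lemma only guarantees that \emph{some} two out of any three vertices are $1$-reachable; it gives no control over which two, so a specific $b\in B$ need not be reachable to anything in $U$. Indeed, in the first case of the proof of Lemma~\ref{main lemma}, $B$ is built precisely around a vertex $v$ with very few reachable partners, and every vertex of $U$ is by construction \emph{not} $1$-reachable to $v$. Reachability is therefore the wrong tool here. The paper instead uses Lemma~\ref{lem: cover threshold} (from~\cite{BKLY}), which says every vertex lies in $\Omega(n^3)$ heavy $K_4$'s purely from the degree condition; one greedily covers each $b\in B$ by a heavy $K_4$ disjoint from the absorbing set $A_U\subseteq U$ already built, and sets $A:=A_U\cup F$ where $F$ is the union of these covering tetrahedra. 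Since $B\subseteq F\subseteq A$, any leftover $R\subseteq V(G)\setminus A$ automatically lies in $U$, so the absorbing property of $A_U$ together with the fixed heavy $K_4$-factor on $F$ finishes the job.

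A minor slip: your formulation of reachability (``at least $\beta n^{4\beta-1}$ sets $T$'') is garbled. The paper uses the robust version (for every exclusion set $W$ of size $\beta n$ there is a connector avoiding $W$); in the counting version the exponent would be $4s-1$ with $s$ an integer parameter, not $4\beta-1$.
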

\medskip

The second major task is to find a heavy $K_4$-tiling that covers almost all vertices. %, leaving just a small portion of vertices uncovered. For convenience we will call such a tiling an \textit{almost cover}.

\begin{lemma}[Almost cover]\label{almost}
For any $\mu>0$ and $t\in(0,1)$, there exists $n_0\in \mathbb{N}$ such that the following statement holds. If $G$ is a weighted complete graph with $n\geq n_0$ vertices and $\de^w(G)\ge (\tfrac{1+3t}{4}+\mu)n$, then $G$ contains a $t$-heavy $K_4$-tiling covering all but at most $9+\frac{3}{\mu}$ vertices.
\end{lemma}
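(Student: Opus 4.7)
The plan is to follow the greedy-extremal framework sketched in the proof overview. First, select a maximal collection $\mathcal{R}$ of vertex-disjoint $t$-heavy $K_4$s in $G$ subject to the structural constraint that every $K_4\in\mathcal{R}$ contains at least two heavy triangles and at least two heavy edges. Next, in $G_1:=G-V(\mathcal{R})$, choose a maximal family $\mathcal{T}$ of vertex-disjoint heavy triangles; then, in $G_2:=G_1-V(\mathcal{T})$, choose a maximal matching $M$ of heavy edges. Among all such triples $(\mathcal{R},\mathcal{T},M)$, we take one maximising $(|\mathcal{R}|,|\mathcal{T}|,|M|,\rho)$ lexicographically, with $\rho:=\sum_{K\in\mathcal{R}}w(K)$, and set $U:=V(G_2)\setminus V(M)$. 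The tiling we output is $\mathcal{R}$, and the aim is $3|\mathcal{T}|+2|M|+|U|\le 9+3/\mu$.

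By the three maximalities, $G[U]$ contains no heavy edge (so $w(\{u\},U)\le t(|U|-1)$ for each $u\in U$), $G_2$ contains no heavy triangle, and $G_1$ contains no heavy $K_4$ meeting the structural constraint. Hence, for each $u\in U$,
\[
w\bigl(\{u\},\,V(\mathcal{R})\cup V(\mathcal{T})\cup V(M)\bigr)\;\ge\;\Bigl(\tfrac{1+3t}{4}+\mu\Bigr)n-t(|U|-1).
\]
Suppose for contradiction that $3|\mathcal{T}|+2|M|+|U|>9+3/\mu$. The bulk of the proof then consists of ruling out, case by case, the swaps that would contradict lex-maximality: (i) $u\in U$ and $T\in\mathcal{T}$ with $w(\{u\},T)+w(T)>6t$, so that $T\cup\{u\}$ is a heavy $K_4$ which, via Proposition~\ref{prop: heavy}, satisfies the structural constraint and can be moved into $\mathcal{R}$, strictly increasing $|\mathcal{R}|$; (ii) $u_1,u_2\in U$ and $e\in M$ whose union forms a heavy constrained $K_4$, again increasing $|\mathcal{R}|$ at the cost of $|M|$; (iii) $u\in U$ and $e\in M$ with $w(\{u\},e)+w(e)>3t$, producing a heavy triangle which replaces $e$ and strictly increases $|\mathcal{T}|$; (iv) a rotation inside a single $K_4\in\mathcal{R}$ replacing a vertex $v$ by some $u\in U$ with $w(\{u\},K\setminus\{v\})>w(\{v\},K\setminus\{v\})$ while preserving the structural constraint, which strictly increases $\rho$.

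Assuming none of (i)--(iv) is available, we obtain uniform upper bounds on the weight from each $u\in U$ to each block of $\mathcal{R}$, $\mathcal{T}$, $M$ (roughly of magnitude $4t$ per $K_4$-block, strictly less than $3t$ per triangle-block, and strictly less than $2t$ per edge-block, in each case coming from the negation of the corresponding augmentation). Substituting these bounds into the displayed inequality and using $n=4|\mathcal{R}|+3|\mathcal{T}|+2|M|+|U|$, the main contribution $\tfrac{1+3t}{4}n$ cancels and the surplus $\mu n$ has to be absorbed by the blocks of $\mathcal{T}$ together with a constant-size remainder inside $U\cup V(M)$, which after the natural weighted counting yields $|\mathcal{T}|\le 1/\mu$ and $2|M|+|U|\le 9$, giving the desired bound and a contradiction.

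The main obstacle is the structural constraint on $\mathcal{R}$: every augmenting swap that creates a new $K_4$ must be shown to satisfy the two-heavy-triangle and two-heavy-edge condition, and this is precisely where Proposition~\ref{prop: heavy} (and, when needed, Proposition~\ref{prop: heavy triangle}) come in, through a careful analysis of which faces of the new tetrahedron inherit or gain heaviness. Equally delicate is the book-keeping that an augmentation really increases some coordinate of the lex-ordered tuple without secretly decreasing an earlier one — in particular, when a vertex is evicted from $\mathcal{R}$ during rotation in (iv), one must verify that the subsequent maximal re-selections of $\mathcal{T}$ and $M$ can only retain or increase their sizes, so that the overall tuple strictly grows in the $\rho$-coordinate.
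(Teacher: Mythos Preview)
Your overall framework — choose $(\mathcal{R},\mathcal{T},M)$ lex-maximal with the structural constraint on $\mathcal{R}$, then bound the leftover — is exactly the paper's, and your augmentations (i)--(iii) match the paper's swaps. The genuine gap is the bound you assert on $w(\{u\},K)$ for $u\in U$ and $K\in\mathcal{R}$. No simple inequality of the form $w(\{u\},K)\le 4t$ (or $\le 1+3t$) follows from lex-maximality: your only mechanism touching $\mathcal{R}$ is the rotation (iv), and that gives at best the \emph{relative} bound $w(\{u\},F)\le w(\{v\},F)$ for a heavy face $F$ of $K$ opposite $v$, not an absolute bound in $t$. Concretely, if every edge of $K$ and every edge from $u$ to $K$ has weight $1$, then $w(\{u\},K)=4$ while all rotations leave $\rho$ unchanged, so nothing is contradicted; your displayed counting inequality therefore does not close. (And if the bound \emph{did} hold, your arithmetic would force $|\mathcal{T}|=|M|=|U|=0$, which is too strong.)

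The paper circumvents this by never bounding the weight from a single leftover vertex to a single $K\in\mathcal{R}$. Instead it aggregates over \emph{several} leftover objects at once, averages to locate one $R\in\mathcal{R}$ receiving large total weight from them, and then performs swaps that use \emph{two distinct} vertices of $R$ simultaneously. This yields the three separate bounds $|\mathcal{T}|\le 3$, $|M|\le 1/\mu$, and $|U|\le 1/\mu$ (so the uncovered set has size at most $9+3/\mu$), rather than your proposed $|\mathcal{T}|\le 1/\mu$ and $2|M|+|U|\le 9$. For instance, to bound $|\mathcal{T}|$ one takes four triangles $T_1,\ldots,T_4\in\mathcal{T}$, finds $R$ with $w(\cup T_i,R)\ge 12(1+3t)$, and extracts two distinct vertices $v,v'\in R$ with $w(T_1,v),\,w(T_2,v')>3t$; Proposition~\ref{prop: heavy}\,\ref{per1} then turns $T_1\cup\{v\}$ and $T_2\cup\{v'\}$ into two new constrained heavy $K_4$'s, strictly increasing $|\mathcal{R}|$. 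The arguments for $|M|$ and $|U|$ are similar in spirit but require a more delicate case analysis on which faces and edges of $R$ are heavy, and it is here (not only in (iv)) that the fourth coordinate $\rho$ is actually used to force a contradiction. Your sketch would need to be reorganised along these lines to go through.
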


We end this section with a quick proof of Theorem~\ref{main thm}.
\begin{proof}[\textbf{Proof of Theorem~\ref{main thm}}]
Given constants $\mu>0$ and $t\in (0,1)$, we choose 
$$
\frac{1}{n}\ll \xi\ll\gamma\ll\mu.
$$
Let $G$ be an $n$-vertex weighted complete graph with $n\in 4\mathbb{N}$ and  $\delta^w(G)\ge (\frac{1+3t}{4}+\mu)n$. 
By Lemma~\ref{abs}, we can find a $(K_4,\xi)$-absorbing set $A\subseteq V(G)$ of size at most $\gamma n$. Let $G_1:=G-A$. Recall that $\gamma\ll\mu$. Hence 
\[
\delta^w(G_1)\ge\delta^w(G)-\gamma n\ge \left(\frac{1+3t}{4}+\mu\right)n-\gamma n\ge \left(\frac{1+3t}{4}+\frac{\mu}{2}\right)n.
\]
Applying Lemma~\ref{almost} on $G_1$, we obtain a $t$-heavy $K_4$-tiling $\mathcal{R}$ covering all but a set $R$ of at most $9+\frac{6}{\mu}$ vertices in $G_1$. Since $9+\frac{6}{\mu}\le\xi n$, the absorbing property of $A$ implies that $G[A\cup R]$ contains a $t$-heavy $K_4$-factor, which together with $\mathcal{R}$ forms a $t$-heavy $K_4$-factor in $G$.
\end{proof}

\subsection{Multicolored Regularity}

Let $G=(V,E,w)$ be a graph with an edge weighting $w: E\rightarrow [0,1]$. In order to better visualize the edge weights, we define an  edge coloring $c:E(G)\rightarrow [p]$ for any given integer $p\in \N$, where each edge $e\in E$ is assigned a color $\ell\in[p-1]$ if and only if $w(e)\in [\frac{\ell-1}{p},\frac{\ell}{p})$, and a color $p$ if and only if $w(e)\in [\frac{p-1}{p},1]$. For convenience, given an edge weighting $w$ and an integer $p$, we often use $c(w,p)$ to denote the edge coloring given as above, and simply call $G=(V,E,c)$ a \textit{$p$-edge-colored graph}. % where the color mapped to an edge is determined only by its weight, that is, assigning to each edge $e\in E$ a color $c(e)\in [p]$, where $c(e)=\ell$ if and only if $w(e)\in [\frac{\ell-1}{p},\frac{\ell}{p})$ for each $e\in E$. 
Note that $E(G)=\cup_{\ell \in [p]}E(G_{\ell})$ where $G_{\ell}$ is the weighted subgraph induced by all edges of color $\ell$.

Axenovich and Martin~\cite{AM} generalized the Szemer\'edi's regularity lemma to the multicolored version. %Given an $n$-vertex graph $G=(V,E,c)$ where $V$ is a vertex set of size $n$ and $c: \blue{E}\rightarrow [p]$ is a coloring of the edge set $E$, we often say that $G$ is a \textit{\red{$p$-edge-colored} graph}.

\begin{defn}[Regular pair]
Given a $p$-edge-colored graph $G=(V,E, c)$. For any disjoint vertex subsets $X,Y\subseteq V$ and a color $\ell\in [p]$, the \textit{density} of the pair $(X,Y)$ in $G_\ell$ is defined as $$d_\ell(X,Y):=\frac{|E(G_{\ell}[X,Y])|}{|X||Y|}.$$ %where $e_\ell(X,Y)$ is the number of edges in $G_{\ell}$ with one endpoint in $X$ and the other in $Y$.
For a constant $\varepsilon>0$, the pair $(X,Y)$ is \textit{$\varepsilon$-regular} if for any $X'\subseteq X,\, Y'\subseteq Y$ with $|X'|\ge \varepsilon |X|,\,|Y'|\ge \varepsilon |Y|$, we have $$|d_\ell(X',Y')-d_\ell(X,Y)|<\varepsilon ~\text{for every}\ \ell\in [p].$$
Additionally, for a real vector ${\bf d}=(d_1,\ldots,d_p)$, if for  some $\ell\in [p]$, $d_\ell(X,Y)\ge d_{\ell}$, then we say $(X,Y)$ is \textit{$(\varepsilon,d_{\ell})$-regular} in $G_{\ell}$. Furthermore, we say $(X,Y)$ is \textit{$(\varepsilon,{\bf d})$-regular} in $G$ if $d_\ell(X,Y)\ge d_{\ell}$ for every $\ell\in [p]$.
\end{defn}

% \begin{defn}[Regular pair]
% Given a graph $G$ and disjoint vertex subsets $X,Y\subseteq V(G)$, the \textit{density} of the pair $(X,Y)$ is defined as $$d(X,Y):=\frac{e(X,Y)}{|X||Y|},$$ where $e(X,Y):=e(G[X,Y])$. For $\varepsilon>0$, the pair $(X,Y)$ is $\varepsilon$-\textit{regular} if for any $A\subseteq X, B\subseteq Y$ with $|A|\ge \varepsilon |X|, |B|\ge \varepsilon |Y|$, we have $$|d(A,B)-d(X,Y)|<\varepsilon.$$ Additionally, if $d(X,Y)\ge d$ for some $d>0$, then we say that $(X,Y)$ is $(\varepsilon,d)$-\textit{regular}.
% \end{defn}

\begin{defn}[Regular partition]
For an $n$-vertex $p$-edge-colored graph $G=(V,E,c)$ and a constant $\varepsilon>0$, a partition $\mathcal{P}=\{V_0, V_1, \ldots , V_k\}$ is an \textit{$\varepsilon$-regular partition} of $V(G)$ if all of the following hold: 
\begin{itemize}
  \item $|V_0|\le \varepsilon n$;
  \item $|V_1|=|V_2|=\cdots=|V_k|$;
  \item all but at most $\varepsilon k^2$ pairs $(V_i,V_j)$ with $1\le i< j\le k$ are $\varepsilon$-regular.
\end{itemize}
We usually call $V_1,\ldots,V_k$ \textit{clusters} and call $V_0$ the \textit{exceptional set}.
\end{defn}

% \begin{defn}[Regular partition]
% For a graph $G=(V,E)$ and $\varepsilon,d>0$, a partition $V=V_0\cup V_1\cup \ldots \cup V_k$ is $(\varepsilon,d)$\textit{-regular}, if
% \begin{itemize}
%   \item $|V_0|\le \varepsilon |V|$;
%   \item $|V_1|=|V_2|=\ldots=|V_k|\le\lceil\varepsilon |V|\rceil$;
%   \item all but at most $\varepsilon k^2$ pairs $(V_i,V_j)$ with $1\le i< j\le k$ are $(\varepsilon,d)$-regular.
% \end{itemize}
% We usually call $V_1,\ldots,V_k$ \textit{clusters} and call $V_0$ the \textit{exceptional set}.
% \end{defn}
The following fact follows directly from the definition of regularity.  
\begin{fact}\label{fact: large deg}
Let $(X,Y)$ be an $(\varepsilon,{\bf d})$-regular pair in a $p$-edge-colored graph $G=(V,E,c)$. If $Y'\subseteq Y$ with $|Y'|\ge \varepsilon |Y|$, then all but at most $\varepsilon |X|$ vertices in $X$ have at most $(d_{\ell}+\varepsilon)|Y'|$ neighbors in $Y'$ for each color $\ell\in [p]$.
\end{fact}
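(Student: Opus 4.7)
The plan is the standard contradiction argument that powers most corollaries of the regularity lemma, carried out color by color. First, I would fix a color $\ell \in [p]$ and let $X_\ell \subseteq X$ denote the set of those $x \in X$ whose color-$\ell$ neighborhood in $Y'$ has size strictly larger than $(d_\ell + \varepsilon)|Y'|$. The fact then reduces to proving $|X_\ell| \le \varepsilon|X|$ for every $\ell \in [p]$, so it suffices to show that $|X_\ell| > \varepsilon|X|$ leads to a contradiction.

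Suppose, then, that $|X_\ell| > \varepsilon|X|$ for some $\ell$. Combined with the hypothesis $|Y'| \ge \varepsilon|Y|$, the pair $(X_\ell, Y')$ satisfies the size conditions in the definition of $\varepsilon$-regularity of $(X, Y)$, so the regularity condition applied in color $\ell$ forces
\[
|d_\ell(X_\ell, Y') - d_\ell(X, Y)| < \varepsilon,
\]
and in particular $d_\ell(X_\ell, Y') < d_\ell(X, Y) + \varepsilon$. On the other hand, counting color-$\ell$ edges between $X_\ell$ and $Y'$ by summing degrees over the endpoints in $X_\ell$ and dividing by $|X_\ell||Y'|$ gives the reverse estimate $d_\ell(X_\ell, Y') > d_\ell + \varepsilon$. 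Combining these two inequalities with the lower bound $d_\ell(X, Y) \ge d_\ell$ built into $(\varepsilon, \mathbf{d})$-regularity (in the standard reading of this fact, where $d_\ell$ in the threshold is identified with $d_\ell(X,Y)$) yields the desired contradiction.

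Running exactly the same argument independently for each $\ell \in [p]$ then completes the proof. No step is substantive here: the whole statement is a one-color-at-a-time corollary of the definition of an $\varepsilon$-regular pair, and the only point to watch is that the regularity hypothesis must be applied to the subpair $(X_\ell, Y')$ rather than to $(X, Y)$ itself, which is precisely what the size bounds $|X_\ell| > \varepsilon|X|$ and $|Y'| \ge \varepsilon|Y|$ make available.
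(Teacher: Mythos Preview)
Your argument is the standard one and is exactly what the paper has in mind: the paper gives no written proof, merely stating that the fact ``follows directly from the definition of regularity,'' and your contradiction via the subpair $(X_\ell,Y')$ is precisely that direct derivation. One small clarification: the sentence invoking ``the lower bound $d_\ell(X,Y)\ge d_\ell$'' does not actually close the contradiction (that inequality points the wrong way); what makes the argument work is your parenthetical reading that the symbol $d_\ell$ in the threshold denotes the density $d_\ell(X,Y)$, under which the two displayed inequalities are immediately incompatible---and this is indeed how the fact is applied later in the paper (see the proof of Lemma~\ref{thm: degree form of RL}).
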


Axenovich and Martin~\cite{AM} established a multicolored version of Szemer\'edi's regularity lemma, which can be deduced from the proof framework provided by Koml\'{o}s and Simonovits~\cite{KS}.

\begin{theorem}[Theorem 1.6 in~\cite{AM}]\label{thm: original form of RL}
Fix an integer $p\ge 2$. For every $\varepsilon>0$ and positive integer $K$, there exists an integer $M:=M(K,\varepsilon)$ with the following property:
for every $p$-edge-colored graph $G$ on $n\ge M$ vertices, there exists an $\varepsilon$-regular partition $\mathcal{P}=\{V_0,V_1,\dots,V_k\}$ of the vertex set $V(G)$, where $K\leq k\leq M$.
\end{theorem}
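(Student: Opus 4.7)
The plan is to adapt Szemer\'edi's classical iterated-refinement proof of the regularity lemma to the multicoloured setting, observing that a single mean-square \textbf{index} function summed over all $p$ colours remains bounded and still increases by a definite amount whenever refinement is needed.

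For a partition $\mathcal{P}=\{V_0,V_1,\ldots,V_k\}$ of the vertex set of a $p$-edge-coloured graph $G$ on $n$ vertices, I would define the multicoloured index
\[
q(\mathcal{P}):=\sum_{\ell=1}^{p}\sum_{1\le i<j\le k}\frac{|V_i||V_j|}{n^2}\,d_\ell(V_i,V_j)^2.
\]
Since every edge receives exactly one colour, $\sum_{\ell}d_\ell(V_i,V_j)\le 1$ for each pair, which forces $q(\mathcal{P})\le 1$. Moreover, refining any part can only increase $q$: for each fixed colour $\ell$, single-colour convexity gives monotonicity, and summing over $\ell$ preserves it.

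The crux is a multicoloured defect Cauchy--Schwarz step. Suppose $(V_i,V_j)$ is not $\varepsilon$-regular; by definition there is some colour $\ell\in [p]$ and witnesses $V_i'\subseteq V_i$, $V_j'\subseteq V_j$ with $|V_i'|\ge\varepsilon|V_i|$, $|V_j'|\ge\varepsilon|V_j|$ and $|d_\ell(V_i',V_j')-d_\ell(V_i,V_j)|\ge\varepsilon$. Applying the single-colour defect Cauchy--Schwarz to the $2\times 2$ refinement $(V_i,V_j)=(V_i'\cup(V_i\setminus V_i'),\,V_j'\cup(V_j\setminus V_j'))$ yields an index gain of at least $\varepsilon^4|V_i||V_j|/n^2$ in colour $\ell$; contributions from the remaining colours cannot decrease, so $q$ rises by at least this amount. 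Aggregating over the $\ge\varepsilon k^2$ irregular pairs forced by $\mathcal{P}$ failing to be $\varepsilon$-regular, and then merging the resulting binary subpartitions of each $V_i$ into a common equipartition (absorbing at most $\varepsilon n$ leftover vertices into an enlarged exceptional set $V_0$, in the manner of the Koml\'os--Simonovits presentation), I obtain a refined partition $\mathcal{P}'$ with $q(\mathcal{P}')\ge q(\mathcal{P})+\varepsilon^5/2$.

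The rest is routine: begin with an arbitrary equipartition into $K$ parts (adding a small exceptional set if $K\nmid n$) and iterate the refinement as long as the current partition fails to be $\varepsilon$-regular. Because $q\in[0,1]$ increases by at least $\varepsilon^5/2$ at each step, the process terminates after at most $\lceil 2/\varepsilon^5\rceil$ iterations; each iteration inflates the number of parts by at most a factor of $4^{k}$, so the final $k$ is bounded above by a tower-type function $M(K,\varepsilon)$ of $K$ and $1/\varepsilon$, yielding the claimed $K\le k\le M$. The chief obstacle relative to the one-colour case is checking that the index gain witnessed in a single colour is not silently cancelled by losses in the other $p-1$ colours; this is precisely what colour-wise convexity (Jensen's inequality) rules out, but care is needed because the colour-densities are coupled by $\sum_\ell d_\ell\le 1$, and one has to confirm that no hidden normalisation shrinks the per-step gain.
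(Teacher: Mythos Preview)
The paper does not supply its own proof of this statement; it quotes it as Theorem~1.6 of Axenovich--Martin and merely remarks that it can be deduced from the Koml\'os--Simonovits proof framework. Your proposal reconstructs exactly that framework---a colourwise-summed mean-square index with the standard defect Cauchy--Schwarz increment and iterated refinement---and is correct.
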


%Applying with Theorem~\ref{original form of RL}, we can obtain a ``pure" graph $G'$ by deleting the edges within the clusters of the partition and the edges of non-regular pairs and regular pairs with low densities. The following precise formulation of this process is a degree form of Szemer\'edi regularity lemma of multicolor graph.

Following a standard `cleaning' step as outlined in \cite{Townsend2015}, we can derive a degree form of the  multicolored regularity lemma from Theorem~\ref{thm: original form of RL}. For completeness, we present its proof in Appendix A.

%\red{(Yang: what is "cleaning"?)}, 

\begin{lemma}[Degree form of the multicolored regularity lemma]\label{thm: degree form of RL}
Fix an integer $p\ge 2$. For every $\eps>0$ there is an integer  $M:=M(\eps)$ such that the following holds for any real vector ${\bf d}=(d_1,\ldots,d_p)\in [0,1]^p$ and every integer $n\geq M$. Let $G$ be a $p$-edge-colored graph on $n$ vertices. Then there exists an $\eps$-regular partition $\mathcal{P}=\{V_0,V_1, \dots,V_k\}$ of $V(G)$ and a spanning subgraph $G'\subseteq G$ with the following properties:
\begin{enumerate}[label =\rm (\arabic{enumi})]
 \item\label{pro1} $\frac{1}{\eps}\le k \le M$;
 \item\label{pro2} $|V_0|\le \eps n$ and $|V_1|=|V_2|=\dots=|V_k|=:m$; % with $m\le \eps n$;
 \item\label{pro3} $d_{G'_{\ell}}(v)>d_{G_\ell}(v)-(d_{\ell}+\eps)n$ for every $v\in V(G)$ and $\ell \in [p]$;
 \item\label{pro4} $|E(G_\ell'[V_i])|=0$ for every $i\in [k]$ and $\ell \in [p]$;
 \item\label{pro5} for $1\le i<j\le k$ and $\ell\in[p]$, each pair $(V_i,V_j)$ is $\eps$-regular with density either $0$ or at least $d_{\ell}$ in $G'_{\ell}$.
\end{enumerate}
\end{lemma}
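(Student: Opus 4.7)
The plan is to derive this degree-form multicolored regularity lemma from Theorem~\ref{thm: original form of RL} by the standard edge-deletion ``cleaning'' procedure, in the spirit of the uncolored argument in~\cite{Townsend2015}. Given $\eps>0$, I would fix an auxiliary parameter $\eps_0$ sufficiently small in terms of $\eps$ and $p$, set $K:=\lceil 2/\eps\rceil$, and let $M:=M(K,\eps_0)$ be the constant from Theorem~\ref{thm: original form of RL}. For the given $n$-vertex $p$-edge-colored graph $G$, that theorem supplies an $\eps_0$-regular partition $\mathcal{P}_0=\{V_0^\ast, V_1,\ldots, V_k\}$ with $K\le k\le M$.

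The next step is to enlarge $V_0^\ast$ to the final exceptional set $V_0$ by dumping into it: (a) every cluster $V_i$ involved in more than $\sqrt{\eps_0}\,k$ irregular pairs---by Markov these are at most $2\sqrt{\eps_0}\,k$ clusters, contributing $\le 2\sqrt{\eps_0}\,n$ vertices---and (b) every vertex $v\in V_i$ that is ``atypical'' (per Fact~\ref{fact: large deg}) with respect to some surviving regular pair $(V_i,V_j)$ and some color $\ell$ with $d_\ell(V_i,V_j)<d_\ell$. A union bound over pairs and colors, combined with the smallness of $\eps_0$, keeps $|V_0|\le \eps n$, giving \ref{pro2}; the surviving cluster count satisfies $k\ge K/2\ge 1/\eps$, giving \ref{pro1}; and a final trim of each remaining cluster to their common size preserves uniformity. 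Then I define $G'\subseteq G$ by retaining a color-$\ell$ edge $uv$ exactly when either $\{u,v\}$ meets $V_0$, or $u\in V_i$ and $v\in V_j$ lie in distinct surviving clusters with $(V_i,V_j)$ being $\eps_0$-regular and $d_\ell(V_i,V_j)\ge d_\ell$; properties \ref{pro4} and \ref{pro5} are then immediate.

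It remains to verify \ref{pro3}. For $v\in V_0$ no edge at $v$ is removed, so the inequality is trivial. For $v\in V_i$ with $i\ge 1$ and color $\ell$, the removed color-$\ell$ edges at $v$ decompose into edges inside $V_i$ (at most $m\le \eps n$), edges into irregular partners of $V_i$ (at most $\sqrt{\eps_0}\,k\cdot m\le \sqrt{\eps_0}\,n$ by step~(a)), and edges into regular partners $V_j$ of color-$\ell$ density below $d_\ell$---for which, since $v$ survived step~(b), Fact~\ref{fact: large deg} supplies the per-pair bound $|N_{G_\ell}(v)\cap V_j|\le (d_\ell+\eps_0)|V_j|$, summing to at most $(d_\ell+\eps_0)\,n$. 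The total is strictly less than $(d_\ell+\eps)\,n$, completing \ref{pro3}.

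The whole argument is routine bookkeeping, and I do not foresee any real obstacle; the only care needed is the standard up-front calibration of $\eps_0$ small enough relative to $\eps$, $p$, and the resulting cluster-count bound $M$, which is handled by the usual maneuver of pinning down $M$ first and then shrinking $\eps_0$ in the hierarchy.
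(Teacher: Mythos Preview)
Your plan follows the standard cleaning argument, and the handling of irregular pairs in step~(a) as well as the degree accounting for \ref{pro3} are fine. The gap is in step~(b). Dumping into $V_0$ \emph{every} vertex that is atypical for \emph{some} pair and \emph{some} colour removes, by your own union bound, up to $\eps_0 m$ vertices from $V_i$ per partner $V_j$ and per colour $\ell$; summing over the $k-1$ partners and the $p$ colours gives up to $p(k-1)\eps_0 m$ vertices lost from each cluster, hence up to $pk\eps_0 n$ in total. For this to be at most $\eps n$ you need $\eps_0\lesssim \eps/(pk)$, but $k$ can be as large as $M(K,\eps_0)$, which grows as $\eps_0\to 0$. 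Your ``usual maneuver of pinning down $M$ first and then shrinking $\eps_0$'' is therefore circular: $M$ is a function of $\eps_0$, and shrinking $\eps_0$ only makes $M$ (and hence potentially $k$) larger. The same issue infects the trimming step, since a single cluster could in principle lose a constant fraction of its vertices in~(b), forcing all other clusters to be trimmed by that fraction.

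The paper breaks this circularity by counting \emph{edges} rather than atypical vertices. For each atypical vertex in a low-density pair it marks only the ``excess'' colour-$\ell$ edges beyond the typical count $(d_\ell+2\eps')m'$; the total number of marked edges is then at most $p\binom{k'}{2}\cdot \eps' m'\cdot m'\le \tfrac{1}{2}p\eps' n^2$, crucially independent of $k'$. Only vertices incident to at least $\tfrac{\eps}{4}n$ marked edges are moved to $V_0$, and by a Markov-type count there are at most $O(p\eps'/\eps)\,n$ of them, which is small once $\eps'\le \eps^2/(4p)$. Every surviving vertex then obeys exactly the per-pair neighbour bound you need in your verification of \ref{pro3}, up to a global slack of $\tfrac{\eps}{4}n$. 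Equal cluster sizes are finally restored by subdividing into small equal blocks (using Fact~\ref{fact: regular pair} to retain regularity) rather than trimming to a minimum.
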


%{\color{blue}We first delete the edges within $V_i$ for each $i\in [k]$. Indeed, every vertex in $V_i$ has at most $\varepsilon n$ neighbors inside $V_i$ since the size of $V_i$ is at most $\varepsilon n$. Next we delete the edges between non-regular pair $(V_i,V_j)$. By Theorem~\ref{thm: original form of RL}, for almost $i\in [k]$, there are at most $\varepsilon k$ $j\in [k]$ such that $(V_i,V_j)$ is non-regular (\red{Yang: seems not true}). Last we delete the edges between regular pair\red{s} with low density. For each $\varepsilon$-regular pair $(V_i,V_j)$, there are at most $d_{\ell}\cdot m^2$ edges with the $\ell$-density at most $d_{\ell}$ (\red{Yang: rephrase}). Therefore, for every $v\in V(G)$(\red{Yang: what about vertices in $V_0$?}), we will delete at most $\varepsilon n+\varepsilon k\cdot m+\red{d_{\ell}m\cdot k}\le (d_{\ell}+2\varepsilon)n$ edges incident with $v$ under the edge coloring $\ell\in [p]$.}

%For convenience, we refer to the partition obtained in Lemma~\ref{thm: degree form of RL} an $(\varepsilon,\mathbf{d})$-regular partition.
A widely-used auxiliary graph accompanied with the regular partition is the \textit{reduced graph} defined as follows. %An \textit{weighted  reduced graph} of $G$ is a union of $G_{\ell}$ that is defined on the vertex set $\{V_1,\dots, V_k\}$ for each color $\ell\in [p]$ and we employ the following definitions.
%by assigning the density between each pair of partitions as weights under the color $\ell\in [p]$ 
\begin{defn}[Reduced graph]
Let $G$ be a $p$-edge-colored weighted graph  with the corresponding edge coloring $c(w,p)$. Given parameter $\eps>0$, let $G'$ be a spanning subgraph of $G$ and $\mathcal{P}=\{V_0,V_1, \dots,V_k\}$ be a partition of $V(G)$, as given in Lemma \ref{thm: degree form of RL}. We define the \textit{edge-weighted reduced graph} $R:=R(\eps)$ for $\mathcal{P}$ as follows: %Apply Lemma \ref{thm: degree form of RL} to $G$, with parameters $\eps,{\bf d}$ to obtain a spanning subgraph $G'$ and a partition $\mathcal{P}=\{V_0,V_1, \dots,V_k\}$ of $V(G)$. Then 
$R$ is a complete graph on vertex set $\{V_1,\ldots,V_k\}$ and the weight of every edge $V_iV_j$ is defined as 
\[w_{R}(V_iV_j):=\sum_{\ell\in [p]}\frac{\ell-1}{p}\cdot d_\ell(V_i,V_j),\]
where $d_\ell(V_i,V_j)$ denotes the density of the pair $(V_i,V_j)$ in the subgraph $G'_{\ell}$.

\end{defn}
 %Let $k\in \mathbb{N},\,\varepsilon>0$, real vector ${\bf d}=(d_1,\ldots,d_p)$ with $d_{\ell}\in [0,1]$ for each $\ell\in [p]$, $G$ be a $p$-edge-coloring weighted graph with a vertex partition $V(G)=V_0\cup \cdots\cup V_k$ and $G'\subseteq G$ be a spanning subgraph satisfying the properties of Lemma~\ref{thm: degree form of RL}. We denote by $R=\{\{V_1,\cdots,V_k\},E,w\}$ the weighted  reduced graph for the $(\varepsilon,\mathbf{d})$-regular partition, which is defined as follows. For two distinct clusters $V_i$ and $V_j$, we draw an edge between $V_i$ and $V_j$ if $(V_i,V_j)$ is $(\varepsilon,\mathbf{d})$-regular in $G'$ and assigns to every edge $V_iV_j$ the average density $\sum_{\ell\in [p]}\frac{\ell}{p}\cdot d_\ell(V_i,V_j)$.

%To simplify the notation, we use $d_\ell(V_i,V_j)$ to denote the density $d_\ell(V_i,V_j)$ for every $\varepsilon$-regular pair $(V_i,V_j)$ in $G_{\ell}$. % and use $d_{R}(V_i)$ to denote the degree of $V_i$ in $R$ for each $i\in[k]$. 
%Let $w(V_iV_j):=\sum_{\ell\in [p]}\frac{\ell}{p}\cdot d_\ell(V_i,V_j)$. Then the degree of $V_i$ in $R$ is $d_{R}(V_i)=\sum_{j\neq i}w(V_iV_j)$. 

The following lemma tells us that the weighted reduced graph inherits the minimum weighted degree from the original graph. %reduced graph provided the minimum weighted degree of $G$.

\begin{lemma}\label{lem: degree of reduced graph}
For a non-negative vector ${\bf d}=(d_1,\ldots,d_p)$ and positive constants $\mu, \varepsilon, c$, % with $\frac{1}{p}\ll\mu$, 
let $G$ be an $n$-vertex $p$-edge-colored weighted graph  with the corresponding edge coloring $c(w,p)$ and  $\delta^w(G)\ge (c+\mu)n$. Assume that  $G'$ and $\mathcal{P}$ are obtained from Lemma~\ref{thm: degree form of RL}, and $R:=R(\eps)$ is the corresponding edge-weigthed reduced graph. Then $\delta^w(R)\ge (c+\mu-\sum_{\ell\in [p]}d_{\ell}-2p\varepsilon-\tfrac{1}{p} )k$.
\end{lemma}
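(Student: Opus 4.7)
The plan is a standard double-counting argument that walks a single vertex's weighted degree from $G$ up to the reduced graph $R$, keeping careful track of the three sources of loss: the discretization of weights into $p$ colors, the cleaning step of the regularity lemma, and the exceptional set.

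First I would fix an arbitrary cluster $V_i \in V(R)$ and any vertex $v \in V_i$. The first step is a purely weight-vs-color comparison: since an edge of color $\ell$ has weight in $[\tfrac{\ell-1}{p},\tfrac{\ell}{p}]$, we have
\[
d^w_G(v) \;\le\; \sum_{\ell\in[p]} \frac{\ell}{p}\,d_{G_\ell}(v) \;=\; \sum_{\ell\in[p]} \frac{\ell-1}{p}\,d_{G_\ell}(v) \;+\; \frac{1}{p}\sum_{\ell\in[p]} d_{G_\ell}(v),
\]
and since $\sum_\ell d_{G_\ell}(v)\le n$, rearranging gives $\sum_\ell \frac{\ell-1}{p}d_{G_\ell}(v)\ge (c+\mu)n - \tfrac{n}{p}$. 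The second step is to pass from $G$ to $G'$ via property \ref{pro3} of Lemma~\ref{thm: degree form of RL}: for each color $\ell$, $d_{G'_\ell}(v) \ge d_{G_\ell}(v) - (d_\ell+\eps)n$. Multiplying by $\tfrac{\ell-1}{p}\le 1$ and summing over $\ell$ loses at most $(\sum_\ell d_\ell + p\eps)n$, so
\[
\sum_{\ell\in[p]} \tfrac{\ell-1}{p}\,d_{G'_\ell}(v) \;\ge\; \Bigl(c+\mu - \tfrac{1}{p} - \textstyle\sum_\ell d_\ell - p\eps\Bigr)n.
\]

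The third step averages this inequality over all $v\in V_i$ and re-expresses the left-hand side in terms of pair densities. Using property \ref{pro4} (no $G'_\ell$-edges inside $V_i$), the only contributions to $\sum_{v\in V_i} d_{G'_\ell}(v)$ come from edges to $V_0$ or to other clusters $V_j$; the former is bounded by $|V_i|\cdot|V_0|\le m\eps n$, and the latter is $\sum_{j\ne i} d_\ell(V_i,V_j)\cdot m^2$. Assembling the colors then yields
\[
m^2\sum_{j\ne i} w_R(V_iV_j) + p\,m\eps n \;\ge\; m\Bigl(c+\mu - \tfrac{1}{p} - \textstyle\sum_\ell d_\ell - p\eps\Bigr)n.
\]

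Dividing through by $m^2$ and using $n/m \ge k$ (which holds because $km\le n$) will deliver exactly
\[
d^w_R(V_i) \;\ge\; \Bigl(c+\mu - \textstyle\sum_\ell d_\ell - 2p\eps - \tfrac{1}{p}\Bigr)k,
\]
as required. I do not foresee a genuine obstacle here; the only subtlety is bookkeeping, in particular remembering the additive $1/p$ slack from the weight-to-color discretization and the $p\eps$ slack that enters twice (once from property \ref{pro3} and once from the $V_0$-correction). The argument is uniform in the choice of $V_i$, giving the bound on $\delta^w(R)$.
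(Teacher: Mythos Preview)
Your proposal is correct and follows essentially the same double-counting argument as the paper: both proofs account for the same three losses (the $\tfrac{1}{p}$ discretization, the $\sum_\ell d_\ell + p\eps$ from property~\ref{pro3}, and the $V_0$ correction) and finish via $n/m\ge k$. The only cosmetic differences are that the paper works ``backwards'' from the definition of $d_R^w(V_i)$ while you work ``forwards'' from $d^w_G(v)$, and that you overcount the $V_0$ contribution as $p\,m\eps n$ (bounding each color separately) whereas the paper folds $|V_0|\le\eps n$ into the $(d_\ell+\eps)$ term to get $(d_\ell+2\eps)$; both bookkeepings land on the same $2p\eps$ in the final bound.
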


\begin{proof}
By definition, each $V_i\in V(R)$ has a weighted degree \allowdisplaybreaks
\begin{align}\notag
d^w(V_i)&=\sum_{j\neq i}\sum_{\ell\in [p]}\frac{\ell-1}{p}\cdot d_\ell(V_i,V_j)=\sum_{j\neq i}\sum_{\ell\in [p]}\frac{\ell-1}{p}\cdot \frac{|E(G_{\ell}'[V_i,V_j])|}{|V_i||V_j|}\\\notag
%= \frac{1}{m^2}\sum_{j\ne i}\sum_{\ell\in [p]}\frac{\ell}{p}\cdot {|E(G_{\ell}'[V_i,V_j])|}
&=\frac{1}{m^2}\sum_{j\neq i}\sum_{\ell\in [p]}\frac{\ell-1}{p}\sum_{v\in V_i}|N_{G'_{\ell}}(v)\cap V_j|\\\notag
&= \frac{1}{m^2}\sum_{v\in V_i}\sum_{\ell\in [p]}\frac{\ell-1}{p}\cdot  (d_{G'_{\ell}}(v)-|V_0|)\ \ \  \text{(by Lemma \ref{thm: degree form of RL} \ref{pro4}-\ref{pro5})}\\\notag
&\geq \frac{1}{m^2}\sum_{v\in V_i}\sum_{\ell\in [p]}\frac{\ell-1}{p}\cdot \big{(}d_{G_{\ell}}(v)-(d_{\ell}
+2\eps) n\big{)}\ \ \ \text{(by Lemma \ref{thm: degree form of RL} \ref{pro3})}\\\label{eq:weight}
&\geq \frac{1}{m^2}\sum_{v\in V_i}\sum_{\ell\in [p]} \big{(}d^w_{G_{\ell}}(v)-\tfrac{\ell}{p}\cdot (d_{\ell}+2\eps)n-\frac{d_{G_{\ell}}(v)}{p}\big{)}\\\notag
&\geq \frac{1}{m^2}\sum_{v\in V_i}\big{(}\delta^w(G)-\frac{n}{p}-\sum_{\ell\in [p]}\tfrac{\ell}{p}\cdot(d_{\ell}+2\eps)n\big{)}\\\notag
&\geq\big{(}c+\mu-\sum_{\ell\in [p]}d_{\ell}-2p\eps-\tfrac{1}{p}\big{)}k,
\end{align}
where the inequality in \eqref{eq:weight} follows since the weight of each edge in graph $G_{\ell}$ is at most $\frac{\ell}{p}$. Consequently, $\delta^w(R)\ge (c+\mu-\sum_{\ell\in [p]}d_{\ell}-2p\varepsilon-\tfrac{1}{p} )k$. 
\end{proof}

At the end of this subsection, we establish an embedding lemma that will be instrumental in our proof. We show that every heavy $K_4$ in the reduced weighted graph $R$ ensures at least $\Omega(n)$ vertex-disjoint heavy $K_4$s in the original weighted  graph $G$.

\begin{lemma}[Embedding Lemma]\label{lem: embedding lem}
For all $d>0,\,t\in(0,1)$ and $\frac{1}{n}\ll \be\ll \eps\ll d$, assume that $G$ is a $p$-edge-colored weighted complete graph on $n$ vertices and $\mathcal{P}=\{V_0,V_1,\ldots,V_k\}$ is the partition of $V(G)$ obtained in Lemma~\ref{thm: degree form of RL}. Let ${\bf d}=(d,\ldots,d)$  and $R:=R(\eps)$ be the weighted reduced graph for $\mathcal{P}$. If $R$ contains a $t$-heavy $K_4$ induced on $\{V_1,\cdots,V_4\}$, then $G$ contains $\be n$ vertex-disjoint $t$-heavy $K_4$'s each containing exactly one vertex in $V_i$ for $i\in[4]$. 
\end{lemma}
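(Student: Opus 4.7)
The plan is to show first that a uniformly random $4$-tuple $(v_1,v_2,v_3,v_4)\in V_1\times V_2\times V_3\times V_4$ has expected total edge weight strictly exceeding $6t$ in $G$, then to deduce that a positive fraction of such tuples actually form $t$-heavy $K_4$'s, and finally to extract $\be n$ vertex-disjoint copies via a greedy argument. Write $\eta:=\sum_{1\le i<j\le 4}w_R(V_iV_j)-6t$, which is strictly positive by the hypothesis that $\{V_1,\ldots,V_4\}$ induces a $t$-heavy $K_4$ in $R$.

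The key step is the inequality $\bar w(V_i,V_j)\ge w_R(V_iV_j)$ for each pair $1\le i<j\le 4$, where $\bar w(V_i,V_j):=m^{-2}\sum_{v_i\in V_i,\,v_j\in V_j}w(v_iv_j)$ is the average edge weight across the pair in $G$. To see this, note that $G$ is complete, so every pair $(v_i,v_j)\in V_i\times V_j$ is an edge, and an edge of color $\ell$ has weight at least $(\ell-1)/p$ by the definition of $c(w,p)$. Summing this lower bound and using $G'\subseteq G$ (so $d_\ell^{G'}(V_i,V_j)\le d_\ell^G(V_i,V_j)$), one obtains $\bar w(V_i,V_j)\ge \sum_{\ell\in[p]}\tfrac{\ell-1}{p}\,d_\ell^{G'}(V_i,V_j)=w_R(V_iV_j)$. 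Choosing the $v_i$'s independently and uniformly then gives, by linearity of expectation, $\mathbb{E}\!\left[\sum_{i<j}w(v_iv_j)\right]\ge 6t+\eta$.

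Since every $K_4$ has total weight at most $6$, an elementary averaging bound (applied to the random variable $X=\sum_{i<j}w(v_iv_j)$ with $X\le 6$ and $\mathbb{E}[X]\ge 6t+\eta$) yields $\Pr[X>6t]\ge \eta/(6(1-t))$. Hence at least $\eta m^4/(6(1-t))$ of the $m^4$ tuples in $V_1\times\cdots\times V_4$ form $t$-heavy $K_4$'s in $G$. Each single vertex lies in at most $m^3$ such tuples, so a greedy procedure—iteratively selecting a heavy $K_4$ and deleting its four vertices—continues for at least $\eta m/(24(1-t))$ steps. Since $m\ge (1-\eps)n/k$ with $k$ bounded by $M(\eps)$ from Lemma~\ref{thm: degree form of RL}, this produces $\Omega_{\eta,\eps}(n)$ vertex-disjoint heavy $K_4$'s, exceeding $\be n$ provided $\be$ is taken sufficiently small in the hierarchy $\be\ll\eps$.

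The main subtlety lies in the fact that the slack $\eta$ is not bounded below by a function of $\eps$ and $d$ alone; it depends on the particular heavy $K_4$ in $R$. Thus, to obtain a \emph{uniform} $\be$, one must either sharpen the averaging step by exploiting a quantitative strict slack built into the heaviness assumption, or note that $\be$ is to be chosen last in the parameter hierarchy and therefore may be tailored to any fixed instance. It is worth remarking that the $\eps$-regularity of the pairs $(V_i,V_j)$ plays no essential role in this argument beyond the existence of $R$: completeness of $G$ and the definition of $w_R$ alone drive the expectation bound, and no concentration within individual regular pairs is needed.
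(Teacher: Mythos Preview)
Your averaging and greedy steps are correct, but the gap you label a ``subtlety'' is fatal as stated. The quantity $\eta=\sum_{i<j}w_R(V_iV_j)-6t$ can be arbitrarily small: the reduced-graph edge weights $w_R(V_iV_j)=\sum_\ell\frac{\ell-1}{p}d_\ell(V_i,V_j)$ take a continuum of values, so nothing in the hypotheses bounds $\eta$ away from zero. Your greedy output is $\eta m/(24(1-t))$ disjoint copies, which for small $\eta$ falls below any $\be n$ that was fixed in advance. Neither proposed fix works: there is no quantitative slack in the definition of $t$-heavy, and $\be$ is \emph{not} permitted to depend on the instance---in the hierarchy $\be\ll\eps\ll d$ it is chosen as a function of $\eps,d,t,p$ only, and in the application (the proof of Lemma~\ref{main lemma}) the same $\be$ must serve for whichever heavy $K_4$ is later found in $R$.

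The paper's proof avoids this by using regularity rather than averaging. For each pair with $w_{ij}>0$ it selects a single color $\ell_{ij}$ satisfying $\tfrac{\ell_{ij}-1}{p}\ge w_{ij}$; such a color exists because $w_{ij}=\sum_\ell\tfrac{\ell-1}{p}d_\ell(V_i,V_j)$ with $\sum_\ell d_\ell\le 1$, so some term with positive density must have $\tfrac{\ell-1}{p}\ge w_{ij}$, and by property~\ref{pro5} of Lemma~\ref{thm: degree form of RL} that pair is $(\eps,d)$-regular in $G'_{\ell_{ij}}$. The standard embedding lemma applied to these six $(\eps,d)$-regular bipartite graphs yields $\Omega_{d,\eps}(m)$ vertex-disjoint $K_4$'s, and every such copy has weight at least $\sum_{i<j}\tfrac{\ell_{ij}-1}{p}\ge\sum_{i<j}w_{ij}>6t$. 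The point is that the number of copies now depends only on $d$ and $\eps$, not on $\eta$. Your remark that regularity ``plays no essential role'' is thus exactly backwards: it is precisely what decouples the count from the slack.
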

\begin{proof}
Let $w_R$ be the edge-weighting of $R$ with $w_{ij}:=w_R(V_iV_j)=\sum_{\ell\in [p]}\frac{\ell-1}{p}\cdot d_{\ell}(V_i,V_j)$ for each $V_iV_j\in E(R)$. Since $R[\{V_1,\ldots,V_4\}]$ is heavy, one has $\sum_{1\le i<j\le 4}w_{ij}>6t$.
For any $1\leq i<j\leq 4$ with $w_{ij}>0$, it follows by definition that there exists $\ell_{ij}\in [p]$ such that  $(V_i,V_j)$ is $(\eps, d)$-regular in the subgraph $G_{\ell_{ij}}'$ and $\frac{\ell_{ij}-1}{p}\ge w_{ij}$. Otherwise, we have $\frac{\ell-1}{p}< w_{ij}$ for all $\ell\in [p]$, and this yields a contradiction as 
\[
w_{ij}=\sum_{\ell\in [p]}\frac{\ell-1}{p}\cdot d_{\ell}(V_i,V_j)<w_{ij}\cdot\sum_{\ell\in [p]}d_{\ell}(V_i,V_j)\le w_{ij}.
\]
If $w_{ij}=0$, then we arbitrarily pick an integer $\ell_{ij}\in[p]$.

Now we have a collection of $\eps$-regular bipartite subgraphs $\{G'_{\ell_{ij}}[V_i,V_j]:1\le i<j\le 4\}$ such that for each $w_{ij}>0$, $(V_i,V_j)$ is $(\eps, d)$-regular in $G'_{\ell_{ij}}$ and $\frac{\ell_{ij}-1}{p}\ge w_{ij}$. By applying a standard embedding lemma and by the choice $\be\ll \eps\ll d$, we can greedily find $\be n$ vertex-disjoint copies of $K_4$ as desired in $G$, and each of them has  total edge weight at least \[\sum_{1\le i<j\le 4,~w_{ij}>0}\tfrac{\ell_{ij}-1}{p}\ge \sum_{1\le i<j\le 4}w_{ij}>6t.\] This completes the proof.
\end{proof}

\subsection{Geometric perspectives on heavy faces in Tetrahedrons}
In this section, we collect a series of interesting observations that will be useful in our proof. 
\begin{prop}\label{prop: heavy}
The following results hold for any fixed weighted complete graph $G$ and $t\in(0,1)$.
\begin{enumerate}[label =\rm  (\arabic{enumi})]
 \item\label{per1}  Let $T$ be a heavy triangle in $G$ and $u$ be a vertex in $V(G)\setminus V(T)$ such that $w(u,T)>3t$. Then $G[V(T)\cup \{u\}]$ forms a heavy $K_4$ containing at least two heavy triangles and two heavy edges.
 \item\label{per2}  Let $uu'$ and $vv'$ be two disjoint heavy edges of $G$ such that $w(uu',vv')>4t$. Then $G[\{u,u',v,v'\}]$ is a heavy $K_4$ containing  at least two heavy triangles and two heavy edges.
    \end{enumerate}
\end{prop}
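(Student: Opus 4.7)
\bigskip

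\noindent\textbf{Proof proposal.} The plan is to use straightforward averaging over the four triangular faces and the six edges of the $K_4$ in question, exploiting the fact that each edge lies in exactly two triangular faces.

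For part \ref{per1}, write $V(T)=\{v_1,v_2,v_3\}$, set $e_{ij}:=w(v_iv_j)$ and $f_i:=w(uv_i)$, so the hypotheses read $e_{12}+e_{13}+e_{23}>3t$ and $f_1+f_2+f_3>3t$. Adding these immediately gives total weight $>6t$, so the $K_4$ itself is heavy. The four triangular faces are $T_0=v_1v_2v_3$ and $T_i=u v_j v_k$ for $\{i,j,k\}=\{1,2,3\}$; $T_0$ is heavy by assumption. To locate a second heavy triangle, I would observe that
\[
w(T_1)+w(T_2)+w(T_3)=2(f_1+f_2+f_3)+(e_{12}+e_{13}+e_{23})>9t,
\]
so one of $T_1,T_2,T_3$ has weight $>3t$. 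For the heavy edges, assume for contradiction that at most one of the six edges is heavy. If the unique heavy edge lies in $T$, say it is $e_{12}$, then $f_1,f_2,f_3\le t$ forces $f_1+f_2+f_3\le 3t$, contradicting the hypothesis; symmetrically, if the unique heavy edge is some $f_i$, then $e_{12}+e_{13}+e_{23}\le 3t$ gives a contradiction. Hence at least two edges are heavy.

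For part \ref{per2}, label the four cross-edges $a_1=uv,\ a_2=uv',\ a_3=u'v,\ a_4=u'v'$, so $a_1+a_2+a_3+a_4>4t$, while $w(uu'),w(vv')>t$ by assumption. The total weight is $w(uu')+w(vv')+\sum_i a_i>6t$, so the $K_4$ is heavy; the two given edges $uu',vv'$ already supply the required two heavy edges. To produce two heavy triangles, group the four faces according to which of $uu'$ or $vv'$ they contain: let $T_1=uu'v$, $T_2=uu'v'$, $T_3=uvv'$, $T_4=u'vv'$. Then
\[
w(T_1)+w(T_2)=2w(uu')+(a_1+a_2+a_3+a_4)>6t,
\]
so at least one of $T_1,T_2$ is heavy, and analogously $w(T_3)+w(T_4)>6t$ gives a heavy triangle among $\{T_3,T_4\}$. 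Since the two groups share no face, this yields two heavy triangles.

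I do not anticipate any real obstacle: both statements are elementary averaging arguments, and the only bookkeeping is to keep track of which edges appear in which faces (each edge of $K_4$ lies in exactly two of the four triangular faces). The only mildly delicate point is the edge-counting step in \ref{per1}, where one must split on which of the six edges could be the unique heavy one and invoke the appropriate half of the hypothesis to derive a contradiction.
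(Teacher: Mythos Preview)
Your proposal is correct and follows essentially the same approach as the paper: both arguments derive the second heavy triangle by summing the weights of the three (respectively two) relevant faces and applying pigeonhole, and part~\ref{per2} is handled identically by grouping faces according to which of $uu'$, $vv'$ they contain. The only cosmetic difference is that for the two-heavy-edges claim in~\ref{per1} the paper simply asserts ``it is easy to see'', whereas you spell out the case split; just be sure your ``at most one heavy edge'' case analysis also covers the possibility of zero heavy edges (which is immediate from either half of the hypothesis).
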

\begin{proof}%[\bf Proof of Claim \ref{heavy}]
(1) Let $G_1:=G[V(T)\cup \{u\}]$ and $V(T):=\{u_1,u_2,u_3\}$. It is easy to see that $G_1$ contains at least two heavy edges. Suppose to the contrary that $u_1u_2u, u_1u_3u$ and $u_2u_3u$ are not heavy triangles.
Hence 
\[
3t+6t< w(u_1u_2u_3)+2w(u_1u_2u_3,u)= w(u_1u_2u)+w(u_1u_3u)+w(u_2u_3u)\le 9t,
\]
a contradiction.
It follows that $G_1$ contains a heavy triangle different from $T$, as desired. 

(2) Clearly, $G[\{u,u',v,v'\}]$ is a heavy $K_4$ containing at least two heavy edges. Since $w(uu',vv')>4t$, either 
$w(u,vv')>2t$ or $w(u',vv')>2t$ holds. Hence, at least one of $uvv'$ and $u'vv'$ is a heavy triangle in $G$. Similarly, either 
$vuu'$ or $v'uu'$ is a heavy triangle in $G$, as desired. %Hence, $uu'vv'$ contains a heavy triangle $T$ with $u,u'\in V(T)$, as desired. 
\end{proof}

\noindent\textbf{Remark.} It is worth mentioning that Proposition~\ref{prop: heavy}~\ref{per1} can be extended for larger cliques $K_r$ with $r\ge 4$:  Let $T$ be a heavy $K_r$ in $G$ and $u$ be a vertex such that $w(u,T)>rt$. Then $G[V(T)\cup \{u\}]$ forms a heavy $K_{r+1}$ containing at least two heavy $K_r$s and two heavy edges. The proof is almost the same as above and here we omit it.

\begin{prop}\label{prop: heavy triangle}
For all $t\in(0,1)$ and $\mu>0$, if $G$ is a weighted complete graph with $\delta^w(G)\geq (\frac{1+3t}{4}+\mu)n$, then the following results hold for sufficiently large $n$.
\begin{enumerate}[label =\rm  (\arabic{enumi})]
 \item\label{quarter1} If $G$ contains a heavy $K_{r-1}$ for some integer $r\geq 2$, then there are at least $\frac{1}{4}n$ vertices $u\in V(G)\setminus V(K_{r-1})$ such that $w(V(K_{r-1}),u)> (r-1)t$.
Furthermore, $V(K_{r-1})\cup\{u\}$ induces a heavy $K_r$ in $G$.
 \item\label{quarter2} If $S\<V(G)$ is a set of $4$ vertices, then there are at least $\mu n$ vertices $u\in V(G)\setminus S$ such that $w(S,u)>1+3t$.
\end{enumerate}
\end{prop}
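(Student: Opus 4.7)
The plan is to prove both parts by a simple weighted double-counting argument, using only the minimum weighted degree condition and the fact that edge weights lie in $[0,1]$.

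For part (1), I would set $K := V(K_{r-1})$ and estimate
\[
\Sigma := \sum_{u \in V(G) \setminus K} w(K, u) = \sum_{v \in K} d^w(v) - 2w(K)
\]
from below using the degree hypothesis, which gives $\Sigma \geq (r-1)(\tfrac{1+3t}{4}+\mu)n - (r-1)(r-2)$, since $w(K) \leq \binom{r-1}{2}$. Letting $B := \{u \in V(G) \setminus K : w(K,u) > (r-1)t\}$, I would bound $\Sigma$ from above by $(r-1)|B| + (r-1)t(n - (r-1) - |B|)$, because each $u \notin B$ contributes at most $(r-1)t$ to the sum and each $u \in B$ contributes at most $r-1$. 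Combining the two bounds, dividing by $r-1$, and solving for $|B|$ yields an inequality of the shape $|B|(1-t) \geq \tfrac{n(1-t)}{4} + \mu n - O(r)$, so $|B| \geq n/4$ once $n$ is large. The ``furthermore'' clause is then immediate: since $K$ is heavy, $w(K) > \binom{r-1}{2}t$, and adding $w(K, u) > (r-1)t$ produces $w(K \cup \{u\}) > \binom{r}{2}t$, so $K \cup \{u\}$ induces a heavy $K_r$.

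Part (2) follows the identical template with the $4$-set $S$ in place of $K$. The lower bound on $\sum_{u \notin S} w(S, u) = \sum_{v \in S} d^w(v) - 2w(S)$ becomes $(1 + 3t + 4\mu)n - 12$, while, writing $D := \{u \in V(G) \setminus S : w(S,u) > 1+3t\}$, the upper bound is $4|D| + (1+3t)(n - 4 - |D|)$. These combine to give $|D|(3 - 3t) \geq 4\mu n - O(1)$, hence $|D| \geq \mu n$ for $n$ large (using $\tfrac{4}{3(1-t)} > 1$ since $t \in (0,1)$). There is no substantive obstacle here; the argument is a direct averaging, and the only bookkeeping point is that the factor $(1-t)$ appearing in the denominator on the left matches the gap between the crude above-threshold and below-threshold contributions, which is exactly why the threshold $\tfrac{1+3t}{4}$ in the degree condition is precisely tuned to produce $n/4$ (rather than a larger constant) in part (1), while in part (2) the full $4\mu$ slack survives to produce a linear set $D$.
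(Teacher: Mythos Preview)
Your proposal is correct and essentially identical to the paper's proof: both parts are handled by the same weighted double-counting argument, bounding $w(S,V\setminus S)$ below via the minimum degree condition and above via the split into above-threshold and below-threshold vertices. The paper only writes out part~(1) and defers part~(2) as ``similar'', whereas you spell out both; otherwise the arguments coincide line for line.
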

\begin{proof}%[\textbf{Proof of Claim~\ref{heavy triangle}}]
We only give the proof of \ref{quarter1}, and the second part can be proved by a similar argument. Fix a heavy $K_{r-1}$ with vertex set $S$ in $G$, let $V':=V(G)\setminus S$ and 
\[
X:=\{u\in V': w(u,S)> (r-1)t\}.
\]
Since $\delta^w(G)\ge \left(\frac{1+3t}{4}+\mu\right)n$,  one has 
\begin{align*}
(r-1)\cdot \left(\frac{1+3t}{4}+\mu\right)n-2\cdot \binom{r-1}{2}\le w(S,V')\le (r-1)\cdot |X|+(r-1)t\cdot |V'\setminus X|.
\end{align*}
This implies that $|X|\ge \frac{1}{4}n$, as desired. 
\end{proof}

\section{Almost cover}
In this section, we prove the almost cover lemma by using our new perspectives (in Proposition~\ref{prop: heavy}) on heavy tetrahedrons. 
\begin{proof}[\textbf{Proof of Lemma~\ref{almost}}] 
Given {constants} $\mu>0$ and $t\in(0,1)$, we assume that $G$ is an $n$-vertex weighted complete graph with $\delta^w(G)\ge (\frac{1+3t}{4}+\mu)n$. 

Let $\mathcal{R}$ be a collection of vertex-disjoint heavy $K_4$s in $G$, each of which contains at least two heavy triangles and two heavy edges. Define $\rho:=\sum_{R\in \mathcal{R}}w(R)$. Let $\mathcal{T}$ be a collection of vertex-disjoint heavy triangles in $G[V\setminus V(\mathcal{R})]$, let $M$ be a matching consisting of  heavy edges in $G[V\setminus V(\mathcal{R}\cup \mathcal{T})]$, and denote $I:=V\setminus V(\mathcal{R}\cup \mathcal{T}\cup M)$. Assume that $\mathcal{R}$,  $\mathcal{T}$, and $M$ are picked to maximize the tuple $(|\mathcal{R}|,  |\mathcal{T}|,|M|,\rho)$ lexicographically. %We proceed by considering the following claims. 
In the subsequent three claims, we show that none of $|\mathcal{T}|$, $|M|$, or $|I|$ is large. 
\begin{claim}\label{claim-T-size}
    $|\mathcal{T}|\leq 3$.
\end{claim}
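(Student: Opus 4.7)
My plan is to argue by contradiction: supposing $|\mathcal{T}|\ge 4$, I will exhibit a rearrangement of $(\mathcal{R},\mathcal{T},M)$ that strictly increases the tuple $(|\mathcal{R}|,|\mathcal{T}|,|M|,\rho)$ in lex order, contradicting maximality. Fix four triangles $T_1,T_2,T_3,T_4\in\mathcal{T}$ and for each $i\in[4]$ introduce the extension set
\[ X_i:=\{u\in V(G)\setminus V(T_i): w(u,T_i)>3t\}. \]
By Proposition~\ref{prop: heavy triangle}~\ref{quarter1}, $|X_i|\ge n/4$, and by Proposition~\ref{prop: heavy}~\ref{per1} every $K_4$ of the form $V(T_i)\cup\{u\}$ with $u\in X_i$ automatically contains at least two heavy triangles and two heavy edges, hence is eligible for $\mathcal{R}$.

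The argument splits into two cases. If some $X_i$ meets $V(G)\setminus V(\mathcal{R})$, pick such a $u$: then $V(T_i)\cup\{u\}$ is a valid heavy $K_4$ disjoint from $V(\mathcal{R})$. Depending on where $u$ sits (in $I$, in an edge of $M$, or in another triangle $T'\in\mathcal{T}$), I would drop the relevant element from $\mathcal{T}\cup M\cup\{I\}$ and slot the new $K_4$ into $\mathcal{R}$. This strictly increases $|\mathcal{R}|$, contradicting maximality.

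The substantive case is when $X_i\subseteq V(\mathcal{R})$ for every $i\in[4]$. Here my plan is a 2-for-1 swap: locate $R\in\mathcal{R}$, indices $i\ne j$ in $[4]$, and distinct $v_1,v_2\in V(R)$ with $v_1\in X_i$ and $v_2\in X_j$. Because $v_1,v_2\in V(\mathcal{R})$ are disjoint from $V(\mathcal{T})$, we automatically have $v_1\notin V(T_j)$ and $v_2\notin V(T_i)$, so $V(T_i)\cup\{v_1\}$ and $V(T_j)\cup\{v_2\}$ are vertex-disjoint heavy $K_4$'s of the required type. Replacing $R$ by these two copies and removing $T_i,T_j$ from $\mathcal{T}$ raises $|\mathcal{R}|$ by one, again contradicting maximality.

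The main obstacle is proving that such a swap exists. I plan to handle this by a double-counting contradiction: if no swap is available, then for each $R\in\mathcal{R}$ the bipartite incidence graph on $V(R)$ versus $[4]$, with an edge between $v\in V(R)$ and $i\in[4]$ whenever $v\in X_i$, has no matching of size two. By K\"onig's theorem its edges are covered by a single vertex: either some $v^*\in V(R)$ (forcing $V(R)\cap X_i\subseteq\{v^*\}$ for all $i$) or some $i^*\in[4]$ (forcing $V(R)\cap X_i=\emptyset$ for $i\ne i^*$). In either situation $\sum_{i\in[4]}|V(R)\cap X_i|\le 4$. Summing over $R\in\mathcal{R}$ and using $X_i\subseteq V(\mathcal{R})$ gives
\[ n\le \sum_{i=1}^4 |X_i| = \sum_{R\in\mathcal{R}}\sum_{i=1}^4 |V(R)\cap X_i|\le 4|\mathcal{R}|, \]
so $|\mathcal{R}|\ge n/4$. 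On the other hand, $V(\mathcal{R})\cap V(\mathcal{T})=\emptyset$ and $|V(\mathcal{T})|\ge 12$, so $|\mathcal{R}|\le(n-12)/4<n/4$, a contradiction.
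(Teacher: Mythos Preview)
Your proof is correct, and the overall architecture matches the paper's: both arguments first use maximality of $|\mathcal{R}|$ to reduce to the situation $X_i\subseteq V(\mathcal{R})$ for all $i$ (your Case~1 is exactly what the paper encodes in the line ``for each $i\in[4]$ and any vertex $x\in V\setminus V(\mathcal{R}\cup T_i)$, we have $w(T_i,x)\le 3t$''), and both then find a single $R\in\mathcal{R}$ together with two distinct vertices $v_1,v_2\in V(R)$ extending two different triangles $T_i,T_j$ to eligible heavy $K_4$'s, giving a 2-for-1 swap.

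Where you diverge is in how you locate that swap. The paper computes $w\bigl(\bigcup_i V(T_i),V(\mathcal{R})\bigr)$ directly, averages to find $R$ with $w(\bigcup_i V(T_i),V(R))\ge 12(1+3t+\mu)$, and then uses a short pigeonhole on the four vertices of $R$: the largest-weight triangle $T_1$ satisfies $w(T_1,R)>3(1+3t)$, forcing two good extenders in $V(R)$, while $w(T_1,R)\le 12$ forces $w(T_2,R)>12t$, giving a third. Your route is purely combinatorial: if no $R$ admits a matching of size two in the incidence bipartite graph $V(R)\times[4]$, then K\"onig bounds $\sum_i|V(R)\cap X_i|\le 4$ for every $R$, and summing gives $n\le\sum_i|X_i|\le 4|\mathcal{R}|$, contradicting $4|\mathcal{R}|\le n-12$. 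Your argument is cleaner in that it never touches edge weights after invoking Proposition~\ref{prop: heavy triangle}\ref{quarter1}, while the paper's weight calculation is more explicit and in the same spirit as the subsequent Claims~\ref{claim-M-size} and~\ref{claim-I-size}.
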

\begin{proof}%[\bf Proof of Claim \ref{claim-T-size}]
Suppose for contradiction that there are at least $4$ distinct heavy triangles in $\mathcal{T}$, say $T_1,T_2,T_3$ and $T_4$. Denote by $S$ the vertex set of $\cup_{i\in [4]}T_i$. By the maximality of $|\mathcal{R}|$ and Proposition~\ref{prop: heavy}~\ref{per1}, for each $i\in [4]$ and any vertex $x\in V\setminus V(\mathcal{R}\cup T_i)$, we have $w(T_i,x)\leq 3t$. Therefore, %$w(T_i,V\setminus V(\mathcal{R}))\leq 3t|V\setminus V(\mathcal{R})|+6$. Hence
    \begin{align*}
        w(S, V(\mathcal{R}))
        &\geq12\cdot \left(\frac{1+3t}{4}+\mu\right)n- 
        4\cdot 3t\cdot |V\setminus V(\mathcal{R})|-4\cdot 6\\
        &\geq 3(1+3t+\mu)|V(\mathcal{R})|\\
        &=12(1+3t+\mu)|\mathcal{R}|.
    \end{align*}
Thus, there exists a heavy $K_4$ say $R\in\mathcal{R}$ such that $w(S,V(R))\geq 12(1+3t+\mu)$. Without loss of generality, assume
that $w(T_1,R)\geq w(T_2,R)\geq w(T_3,R)\geq w(T_4,R)$. Hence $w(T_1,R)> 3(1+3t)$ and there are two vertices $v_1,v_2\in V(R)$ such that $w(T_1,v_1)\geq w(T_1,v_2)>3t$. Observe that $w(T_1,R)\leq 12$. Then $w(T_2,R)>12t.$  It follows that there is a vertex $v'\in V(R)$ such that $w(T_2,v')> 3t$. Choose $v\in \{v_1,v_2\}\setminus \{v'\}$. Together with Proposition~\ref{prop: heavy}~\ref{per1}, $G[\{v\}\cup V(T_1)]$ and $G[\{v'\}\cup V(T_2)]$ are two heavy $K_4$s, each containing at least two heavy triangles and two heavy edges (see Figure \ref{fig1}), which violates the maximality of $|\mathcal{R}|$.
\begin{figure}[h]
    \centering
\begin{tikzpicture}[
    dot/.style={circle,fill=black,inner sep=1.5pt},
    highlightdot/.style={
        circle,
        fill=red!80,
        draw=red,
        line width=1.2pt,
        inner sep=1.5pt
    },
    shaded triangle/.style={
        fill=red!50,
        draw=black,
        line width=0.6pt
    },
    highlight line 1/.style={  % 新增高亮边样式
        draw=red,           % 红色
        line width=1.5pt    % 加粗
    }
]
% 定义上方四面体顶点
\coordinate (A1) at (1,2);
\coordinate (A2) at (1.75,2.5);
\coordinate (A3) at (2.5,2);
\coordinate (A4) at (1.75,3);

% 绘制四面体
\draw (A1) -- (A2)-- (A3) -- (A4)-- (A1)-- (A3);
\draw (A2) -- (A4);

% 四面体顶点
\node[dot] at (A1) {};
\node[dot] at (A2) {};
\node[dot] at (A3) {};
\node[dot] at (A4) {};

% 定义下方三角形T1顶点
\coordinate (A) at (-0.5,0.5);
\coordinate (B) at (0.5,0.5);
\coordinate (C) at (1.5,0.5);

% 绘制带阴影的T1三角形（先填充后画边）

\draw[blue, line width=1pt] (A) -- (B) -- (C);
\draw[blue, line width=1pt] (A) -- (C);
\draw[blue, line width=1pt] (A) to[out=-30,in=-150] (C); 

% 定义下方三角形T2顶点
\coordinate (B1) at (2,0.5);
\coordinate (B2) at (3,0.5);
\coordinate (B3) at (4,0.5);

% 绘制带阴影的T2三角形（先填充后画边）

\draw[red, line width=1pt] (B1) -- (B2) -- (B3);
\draw[red, line width=1pt] (B1) -- (B3);
\draw[red, line width=1pt] (B1) to[out=-30,in=-150] (B3); 

% 下方三角形顶点
\node[dot] at (A) {};
\node[dot] at (B) {};
\node[dot] at (C) {};
\node[dot] at (B1) {};
\node[dot] at (B2) {};
\node[dot] at (B3) {};

% 连接线
\draw[blue, line width=1pt] (A1) -- (A);
\draw[blue, line width=1pt] (A1) -- (B);
\draw[blue, line width=1pt] (A1) -- (C);
%\draw (A3) -- (A);
%\draw (A3) -- (B);
%\draw (A3) -- (C);
\draw[red, line width=1pt] (A3) -- (B1);
\draw[red, line width=1pt] (A3) -- (B2);
\draw[red, line width=1pt] (A3) -- (B3);

% 标签
\node at (0.7,2) {$v$};
\node at (2.9,2) {$v'$};
\node at (0.5,-0.1) {$T_1$};
\node at (3,-0.1) {$T_2$};

\end{tikzpicture}
    \caption{Claim \ref{claim-T-size}.}
    \label{fig1}
\end{figure}
\end{proof}

\begin{claim}\label{claim-M-size}
    $|M|\leq \frac{1}{\mu}$.
\end{claim}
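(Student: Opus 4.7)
The plan is to argue by contradiction, assuming $m := |M| > 1/\mu$, in three steps mirroring the strategy of Claim~\ref{claim-T-size}.

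First, I would establish three local constraints from the lex-maximality of $(|\mathcal{R}|, |\mathcal{T}|, |M|, \rho)$: (i) for any distinct $e_i, e_j \in M$, $w(e_i, e_j) \le 4t$, since otherwise Proposition~\ref{prop: heavy}~\ref{per2} produces a heavy $K_4$ on $e_i \cup e_j$ (with two heavy triangles and two heavy edges) that replaces $e_i, e_j$ and strictly increases $|\mathcal{R}|$; (ii) for any $e_i \in M$ and $x \in (I \cup V(M)) \setminus e_i$, $w(e_i, x) \le 2t$, since otherwise the heavy triangle $G[e_i \cup \{x\}]$ augments $|\mathcal{T}|$ after removing $e_i$ (and, if $x \in V(M)$, the other edge of $M$ containing $x$); (iii) for any $v \in V(M)$ and $T \in \mathcal{T}$, $w(v, T) \le 3t$, since otherwise Proposition~\ref{prop: heavy}~\ref{per1} gives a heavy $K_4$ on $T \cup \{v\}$ that augments $|\mathcal{R}|$ after removing $T$ and the edge of $M$ containing $v$.

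Next, I would sum weighted degrees over $V(M)$: $\sum_{v \in V(M)} d^w(v) \ge 2m\delta^w(G) \ge 2m(\tfrac{1+3t}{4}+\mu)n$. Combining (i)--(iii) with Claim~\ref{claim-T-size} ($|\mathcal{T}| \le 3$), the contributions from $G[V(M)]$, $V(\mathcal{T})$, and $I$ are bounded by $2m + 4tm(m-1)$, $18tm$, and $2tm|I|$ respectively, so
\[
w(V(M), V(\mathcal{R})) \ge 2m\delta^w(G) - 2m - 4tm(m-1) - 18tm - 2tm|I|.
\]
Averaging over $R \in \mathcal{R}$ and using $|\mathcal{R}| \le n/4$, some $R^* \in \mathcal{R}$ satisfies $w(V(M), R^*) \ge 2m(1+3t) + 8m\mu - O(m^2/n + tm|I|/n)$ for $n$ large.

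Third, I would use the selection property of $R^*$ (two heavy triangles and two heavy edges) to close the argument. If $R^*$ has two disjoint heavy edges $v_1v_2$ and $v_3v_4$, then the constraint (i) together with Proposition~\ref{prop: heavy}~\ref{per2} rules out distinct $e_i, e_j \in M$ with $w(e_i, \{v_1, v_2\}) > 4t$ and $w(e_j, \{v_3, v_4\}) > 4t$ (else the two resulting heavy $K_4$s replace $R^*, e_i, e_j$ and increase $|\mathcal{R}|$). This forces $w(V(M), R^*) \le 8tm + O(1)$, which combined with the previous lower bound gives $2m(1-t+4\mu) \le O(1)$, contradicting $m > 1/\mu$ after rearranging.

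The main obstacle is the remaining case where the two heavy edges of $R^*$ share a vertex, so $R^*$ admits no two disjoint heavy edges: Proposition~\ref{prop: heavy}~\ref{per2} then yields at most one heavy $K_4$ per attempt, and the direct $|\mathcal{R}|$-augmentation is unavailable. To handle this case, I would instead apply Proposition~\ref{prop: heavy}~\ref{per1} to the two heavy triangles of $R^*$ (which necessarily share an edge) together with the lex-maximality of $\rho$: any swap of $R^*$ for a $K_4$ of the form $T \cup \{v\}$ (with $T$ a heavy triangle of $R^*$ and $v \in V(M)$) that preserves $(|\mathcal{R}|, |\mathcal{T}|, |M|)$ must not strictly increase $\rho$, which translates into weight inequalities on $w(v, T)$ sufficient to bound $w(V(M), R^*)$ by $O(tm) + O(1)$ and close the contradiction.
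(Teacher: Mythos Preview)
Your averaging step to locate a heavy $R^*\in\mathcal{R}$ with large $w(V(M),R^*)$ is essentially the same as the paper's, but the case analysis that follows has a genuine gap.

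In your Case~1 (two disjoint heavy edges $v_1v_2,\,v_3v_4$ in $R^*$), the assertion ``$w(V(M),R^*)\le 8tm+O(1)$'' does not follow from the constraint you state. Ruling out distinct $e_i,e_j$ with $w(e_i,\{v_1,v_2\})>4t$ and $w(e_j,\{v_3,v_4\})>4t$ leaves open the possibility that (say) every $e_i$ satisfies $w(e_i,\{v_1,v_2\})\le 4t$ while $w(e_i,\{v_3,v_4\})$ is unconstrained. That only yields $w(V(M),R^*)\le 4tm+4m$, and since $4tm+4m\ge 2m(1+3t)+8m\mu$ is equivalent to $1-t\ge 4\mu$, there is no contradiction for small $\mu$. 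Your Case~2 is left as a sketch (``weight inequalities sufficient to bound $w(V(M),R^*)$ by $O(tm)+O(1)$'') and the actual mechanism by which $\rho$-maximality produces such a bound is not supplied.

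The paper's route is rather different from the global bound you attempt. After the averaging, it isolates only the two edges $e_1,e_2\in M$ of largest weight to $R$, shows $w(u_1,R),w(u_2,R)>1+3t$, and then works with the two heavy \emph{triangles} of $R$, which necessarily share an edge, say $v_1v_2v_3$ and $v_1v_3v_4$. The argument pivots on the non-shared vertices $v_2,v_4$: if $w(v_2,e_1)>2t$ then $v_2u_1u_1'$ is a heavy triangle while $u_2v_1v_3v_4$ is a valid heavy $K_4$ (via Proposition~\ref{prop: heavy}\ref{per1}), contradicting the maximality of $|\mathcal{T}|$; symmetric arguments force $w(v_2,e_i),w(v_4,e_i)\le 2t$ for $i=1,2$. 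This in turn pushes enough weight onto $v_1,v_3$ to control the location of the heavy edges in $R$, and a final comparison using $\rho$-maximality yields $t>1$. The contradiction is obtained from two specific matching edges and a careful local case analysis, not from an aggregate bound on $w(V(M),R^*)$.
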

\begin{proof}%[\bf Proof of Claim \ref{claim-M-size}]
Suppose that $M=\{e_1,e_2,\ldots,e_{K}\}$ with $e_i=u_iu_i'$ and $K>\frac{1}{\mu}$. By a similar argument as Claim \ref{claim-T-size}, we obtain that there exists a heavy $K_4$, say $R\in\mathcal{R}$ with $V(R)=\{v_1,v_2,v_3,v_4\}$,  satisfying $w(\cup_{i=1}^Ke_i,R)\geq 2K(1+3t+3\mu)$.

Without loss of generality, assume that $w(e_i,R)\geq w(e_j,R)$  and $w(u_i,R)\geq w(u_i',R)$ for each $1\leq i<j\leq K$. Hence $w(e_1,R)> 2(1+3t)$ and 
$$w(e_2,R)\geq \frac{2K(1+3t+3\mu)-8}{K-1}>2(1+3t).$$
Therefore, $w(u_1,R)>  1+3t$ and $w(u_2,R)>1+3t$. 
     
Recall that $R$ contains at least two heavy triangles. Without loss of generality, assume that $v_1v_2v_3$ and $v_1v_3v_4$ are two such triangles. If $w(v_2,e_1)>2t$, then $v_2u_1u_1'$ is a heavy triangle inside $G$. On the other hand, it follows from Proposition~\ref{prop: heavy}~\ref{per1} that $u_2v_1v_3v_4$ is a heavy $K_4$ containing at least two heavy triangles and two heavy edges (see Figure \ref{fig2}). This contradicts the maximality of $|\mathcal{T}|$. Hence  $w(v_2,e_1)\leq 2t$. Similarly, we have $w(v_2,e_2),w(v_4,e_1),w(v_4,e_2)\leq 2t$. In this case, for every $i\in [2]$ we have 
\begin{align*}   &w(v_1v_2v_3,e_i)> 2+4t, \  w(v_1v_3v_4,e_i)> 2+4t,\\
&w(v_1v_3,e_i)> 2+2t,\ w(v_2v_3,e_i)> 4t\ \text{and}\\
&w(v_1,e_i),\,w(v_3,e_i)>2t.
\end{align*}

If $v_2v_3$ is a heavy edge, then Proposition~\ref{prop: heavy}~\ref{per2} implies that $v_2v_3u_1u_1'$ is a heavy $K_4$ containing at least two heavy triangles and two heavy edges. Furthermore, $v_1u_2u_2'$ forms a heavy triangle (see Figure \ref{fig3}), which also  contradicts the maximality of $|\mathcal{T}|$. It follows that $v_iv_{i+1}$ is not a heavy edge for any $i\in [4]$ (where $v_5=v_1$). Recall that  $R$ contains at least two heavy edges. Hence $w(v_1v_3),\, w(v_2v_4)>t$.

\begin{figure}[h]
\begin{minipage}[t]{0.48\textwidth}
    \centering
\begin{tikzpicture}[
    dot/.style={circle,fill=black,inner sep=1.5pt},
    shaded triangle/.style={
        fill=red!50,  % 浅灰色填充
        draw=black,    % 黑色边框
        line width=0.6pt
    }
]

% 定义上方四面体顶点
\coordinate (A1) at (0,2);
\coordinate (A2) at (-0.5,1);
\coordinate (A3) at (0,0);
\coordinate (A4) at (0.5,1);

% 绘制四面体
\draw (A1) -- (A2)-- (A4)-- (A1);
\draw (A2) -- (A4);
\draw (A2) -- (A3);
\draw[red, line width=1pt] (A1) -- (A3) -- (A4)-- (A1);
% 四面体顶点
\node[dot] at (A1) {};
\node[dot] at (A2) {};
\node[dot] at (A3) {};
\node[dot] at (A4) {};

% 定义下方三角形T1顶点
\coordinate (A) at (-1,1.5);
\coordinate (B) at (-1,0.5);

\draw[blue, line width=1pt] (A) -- (B); 

% 定义下方三角形T2顶点
\coordinate (B1) at (1.2,1.5);
\coordinate (B2) at (1.2,0.5);

\draw (B1) -- (B2); 

% 下方三角形顶点
\node[dot] at (A) {};
\node[dot] at (B) {};
\node[dot] at (B1) {};
\node[dot] at (B2) {};

% 连接线
\draw[blue, line width=1pt] (A2) -- (A);
\draw[blue, line width=1pt] (A2) -- (B);
\draw[red, line width=1pt] (B1) -- (A1);
\draw[red, line width=1pt] (B1) -- (A3);
\draw[red, line width=1pt] (B1) -- (A4);

% 标签
\node at (-1.2,1) {$e_1$};
\node at (1.4,1) {$e_2$};
\node at (0,2.2) {$v_1$};
\node at (-0.6,1.3) {$v_2$};
\node at (0,-0.2) {$v_3$};
\node at (0.6,1.3) {$v_4$};
\node at (1.2,1.7) {$u_2$};

\end{tikzpicture}
    \caption{$w(v_2,e_1)>2t.$}
    \label{fig2}
\end{minipage}
\hfill
\begin{minipage}[t]{0.48\textwidth}
\centering
    \begin{tikzpicture}[
    dot/.style={circle,fill=black,inner sep=1.5pt},
    shaded triangle/.style={
        fill=red!50,  % 浅灰色填充
        draw=black,    % 黑色边框
        line width=0.6pt
    }
]

% 定义上方四面体顶点
\coordinate (A1) at (0,2);
\coordinate (A2) at (-0.5,1);
\coordinate (A3) at (0,0);
\coordinate (A4) at (0.5,1);

% 绘制四面体
\draw (A1) -- (A2);
\draw(A3) -- (A4)-- (A1)-- (A3);
\draw (A2) -- (A4);
\draw[red, line width=1pt](A2) -- (A3);

% 四面体顶点
\node[dot] at (A1) {};
\node[dot] at (A2) {};
\node[dot] at (A3) {};
\node[dot] at (A4) {};

% 定义下方三角形T1顶点
\coordinate (A) at (-1,1);
\coordinate (B) at (-1,0);

\draw[red, line width=1pt] (A) -- (B); 

% 定义下方三角形T2顶点
\coordinate (B1) at (1.2,1.5);
\coordinate (B2) at (1.2,0.5);

\draw[blue, line width=1pt] (B1) -- (B2); 

% 下方三角形顶点
\node[dot] at (A) {};
\node[dot] at (B) {};
\node[dot] at (B1) {};
\node[dot] at (B2) {};

% 连接线
\draw[red, line width=1pt] (A2) -- (A);
\draw[red, line width=1pt] (A2) -- (B);
\draw[red, line width=1pt] (A3) -- (A);
\draw[red, line width=1pt] (A3) -- (B);
\draw[blue, line width=1pt] (B1) -- (A1);
\draw[blue, line width=1pt] (B2) -- (A1);

% 标签
\node at (-1.2,0.5) {$e_1$};
\node at (1.4,1) {$e_2$};
\node at (0,2.2) {$v_1$};
\node at (-0.6,1.3) {$v_2$};
\node at (0,-0.2) {$v_3$};
\node at (0.6,0.7) {$v_4$};
%\node at (1.2,1.7) {$u_2$};

\end{tikzpicture}
\caption{$v_2v_3$ is a heavy edge.}
    \label{fig3}
\end{minipage}
\end{figure}
Applying Proposition~\ref{prop: heavy}~\ref{per2} again yields that $u_1u_1'v_1v_3$ is a heavy $K_4$ containing at least two heavy triangles and two heavy edges.  Since $v_2v_4$ is a heavy edge, by the maximality of $\rho$ one has $w(u_1u_1'v_1v_3)\leq w(R)$, that is, 
$$w(v_1v_3)+2+2t+t< w(u_1u_1'v_1v_3)\leq w(R)\leq w(v_1v_3)+4t+1,
$$
 and this implies $t>1$, a contradiction. 
\end{proof}
\begin{claim}\label{claim-I-size}
    $|I|\leq \frac{1}{\mu}$. 
\end{claim}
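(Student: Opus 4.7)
Suppose for contradiction that $K:=|I|>1/\mu$. The plan follows the template of Claims~\ref{claim-T-size} and~\ref{claim-M-size}: extract local bounds from the maximality of the tuple $(|\mathcal{R}|,|\mathcal{T}|,|M|,\rho)$, run an averaging argument over $\mathcal{R}$, then find an augmenting move via Proposition~\ref{prop: heavy}. For every $u \in I$, maximality immediately gives $w(uu') \le t$ for all $u' \in I$ (else $uu'$ extends $M$), $w(u,e) \le 2t$ for every $e \in M$ (else $\{u\}\cup e$ is a heavy triangle and swapping it for $e$ pushes $|\mathcal{T}|$ up), and $w(u,T) \le 3t$ for every $T \in \mathcal{T}$ (else Proposition~\ref{prop: heavy}\ref{per1} turns $\{u\}\cup T$ into a valid $K_4$ swapping into $\mathcal{R}$). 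Combining these with $|M|\le 1/\mu$ from Claim~\ref{claim-M-size} and summing weighted degrees over $I$ would produce
\[
w(I,V(\mathcal{R})) \;\ge\; K\Bigl[\Bigl(\tfrac{1+3t}{4}+\mu\Bigr)n - t(K-1) - 9t - \tfrac{2t}{\mu}\Bigr].
\]

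Averaging this inequality over $\mathcal{R}$ and using $|\mathcal{R}|\le n/4$ (with $n$ much larger than $1/\mu^2$) would yield some $R^*\in\mathcal{R}$ with $w(I,V(R^*)) > K(1+3t+3\mu)$. Applying the sort-and-subtract trick from Claim~\ref{claim-M-size}---using the trivial bound $w(u,V(R^*))\le 4$ together with $K > 1/\mu$---would extract two distinct $u_1,u_2\in I$ with $w(u_j,V(R^*)) > 1+3t$ for $j=1,2$. Since each single edge weight is at most $1$, this forces $w(u_j, V(R^*)\setminus\{v\}) > 3t$ for every $v\in V(R^*)$. Let $T_1,T_2$ denote the two heavy triangles of $R^*$ (they necessarily share one edge $e^*$, as any two triangles in $K_4$ do). Then by Proposition~\ref{prop: heavy}\ref{per1} each of $\{u_j\}\cup T_i$ for $j,i\in\{1,2\}$ is a heavy $K_4$ containing two heavy triangles and two heavy edges---a legal replacement for $R^*$ in $\mathcal{R}$---and the maximality of $\rho$ pins all four of these swaps by $w(u_j,T_i)\le w(V(R^*)\setminus T_i, T_i)$.

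To close the argument I would locate an augmenting move that strictly raises the lexicographic tuple. Since $w(u_1,V(R^*))+w(u_2,V(R^*)) > 2+6t$ and $w(u_1u_2)\le t$, averaging produces some $v^*\in V(R^*)$ for which $\{u_1,u_2,v^*\}$ is a heavy triangle. In the favourable subcase $v^*\notin e^*$, $V(R^*)\setminus\{v^*\}$ coincides with $T_1$ or $T_2$ and is heavy; a second averaging based on $w(V(R^*)\setminus\{v^*\},I) > K(3t+3\mu) - O(1)$ would then produce some $u_3\in I\setminus\{u_1,u_2\}$ with $w(u_3,V(R^*)\setminus\{v^*\}) > 3t$, so by Proposition~\ref{prop: heavy}\ref{per1} the set $\{u_3\}\cup(V(R^*)\setminus\{v^*\})$ is a legal heavy $K_4$. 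Swapping $R^*$ for this $K_4$ and adding $\{u_1,u_2,v^*\}$ to $\mathcal{T}$ raises $|\mathcal{T}|$ by one---contradicting maximality. The hard part, and the main obstacle, is the remaining subcase $v^*\in e^*$, when $V(R^*)\setminus\{v^*\}$ need not be heavy. I expect to resolve it by mirroring the endgame of Claim~\ref{claim-M-size}: a delicate case analysis on which triangles and edges of $R^*$ are heavy, exploiting Proposition~\ref{prop: heavy}\ref{per2} on disjoint heavy edges together with the four simultaneous maximality inequalities $w(u_j,T_i)\le w(V(R^*)\setminus T_i, T_i)$, should assemble an augmenting structure (either a $\rho$-increasing swap or a new pair of vertex-disjoint heavy $K_4$s using $\{u_1,u_2\}\cup V(R^*)$ and an additional $u_3\in I$). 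Pinning down the sharp constant $1/\mu$---rather than a weaker $O(1/\mu)$ bound---is precisely the delicate part of this case analysis, and will require using $u_1$ and $u_2$ simultaneously and exploiting the two-heavy-edge structure inherent to every $K_4$ in $\mathcal{R}$.
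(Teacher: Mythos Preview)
Your proposal has two genuine gaps.

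First, the averaging step that is supposed to produce a heavy triangle $\{u_1,u_2,v^*\}$ does not work for all $t\in(0,1)$. From $w(u_1,V(R^*))+w(u_2,V(R^*))>2+6t$ you only get some $v^*$ with $w(u_1v^*)+w(u_2v^*)>(1+3t)/2$. For $\{u_1,u_2,v^*\}$ to be heavy you need $w(u_1v^*)+w(u_2v^*)>3t-w(u_1u_2)$; since $w(u_1u_2)$ can be $0$, this would require $(1+3t)/2>3t$, i.e.\ $t<1/3$. So the ``favourable subcase'' may simply never arise when $t\ge 1/3$, and your whole case split collapses.

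Second, even when the split makes sense, you leave the case $v^*\in e^*$ unresolved, and your sketch for it (needing a third point $u_3$ and further averaging over $V(R^*)\setminus\{v^*\}$, for which you have no lower bound) does not obviously close.

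The paper avoids both problems with one idea you list but never actually use: the $\rho$-inequalities are strong enough to manufacture a \emph{third} heavy triangle inside $R^*$. Write $V(R^*)=\{v_1,v_2,v_3,v_4\}$ with heavy triangles $v_1v_2v_3$ and $v_1v_3v_4$. First exploit $|M|$-maximality: swapping $R^*$ for $u_2v_1v_2v_3$ frees $v_4$, so $w(u_1v_4)\le t$; symmetrically $w(u_jv_2),w(u_jv_4)\le t$ for $j=1,2$, whence $w(u_jv_1),w(u_jv_3)>t$. Now $\rho$-maximality gives
\[
w(v_4,v_1v_2v_3)\ge w(u_2,v_1v_2v_3)>3t,\qquad w(v_2,v_1v_3v_4)\ge w(u_2,v_1v_3v_4)>3t,
\]
and summing yields $w(v_1v_2v_4)+w(v_2v_3v_4)>6t$, so one of $v_1v_2v_4$, $v_2v_3v_4$ is heavy. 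Attach $u_1$ to that triangle via Proposition~\ref{prop: heavy}\ref{per1} to get a legal replacement for $R^*$; the freed vertex ($v_3$ or $v_1$) together with $u_2$ is then a heavy edge, contradicting the maximality of $|M|$. No heavy triangle on $\{u_1,u_2,v^*\}$, no third vertex $u_3$, and no residual case is needed.
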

\begin{proof}%[\bf Proof of Claim \ref{claim-I-size}]
Suppose that $I=\{u_1,u_2,\ldots,u_{K}\}$ with $K>\frac{1}{\mu}$. By a similar averaging argument as in Claim \ref{claim-T-size}, we obtain that there exists a heavy $K_4$ in $\mathcal{R}$, say $R$ with $V(R)=\{v_1,v_2,v_3,v_4\}$, such that $w(\cup_{i=1}^Ku_i,R)\geq K(1+3t+3\mu)$. We may assume that $w(u_i,R)\geq w(u_j,R)$ for all $1\leq i<j\leq K$. Hence $w(u_1,R)>1+3t$ and then $$ w(u_2,R)\geq \frac{K(1+3t+3\mu)-4}{K-1}>1+3t.$$
Furthermore, we may assume that $v_1v_2v_3$ and $v_1v_3v_4$ are two heavy triangles in $R$. Then  Proposition~\ref{prop: heavy}~\ref{per1} implies that both $u_2v_1v_2v_3$ and $u_2v_1v_3v_4$ are heavy $K_4$s in $G$, each of which contains at least two heavy triangles and two heavy edges.
%$u_2v_1v_3v_4$ is a heavy $K_4$ containing at least two heavy triangles and two heavy edges. 
By the maximality of $|M|$, we obtain $w(v_4u_1),w(v_2u_1)\leq t$. Similarly, none of $u_2v_2,u_2v_4$ is a heavy edge. Therefore, 
$$
     w(u_1v_1),\,w(u_1v_3),\,w(u_2v_1),\,w(u_2v_3)>t.
$$

%Suppose that $w(v_2u_1)>t$. Together with Claim \ref{heavy} (i), $u_2v_1v_3v_4$ is a heavy $K_4$ containing at least two heavy triangles and two heavy edges. This contradicts the maximality of $|M|$. Therefore, none of $u_1v_2,u_1v_4,u_2v_2,u_2v_4$ is a heavy edge and 
 %    $$     %w(u_1v_1),\,w(u_1v_3),\,w(u_2v_1),\,w(u_2v_3)>t.
     %$$
     
On the other hand, the maximality of $\rho$ guarantees that 
     %Notice that both $u_2v_1v_2v_3$ and $u_2v_1v_3v_4$ are heavy $K_4$s in $G$, each of which contains at least two heavy triangles and two heavy edges. Hence 
     $$
     w(v_4,v_1v_2v_3)\geq w(u_2,v_1v_2v_3)>3t\ \text{and}\  w(v_2,v_1v_3v_4)\geq w(u_2,v_1v_3v_4)>3t.
     $$
     Therefore,
     $$
     6t<w(v_4,v_1v_2v_3)+w(v_2,v_1v_3v_4)=w(v_1v_2v_4)+w(v_2v_3v_4).
     $$
     It follows that either $v_1v_2v_4$ or $v_2v_3v_4$ is a heavy triangle in $G$. We only consider the former case. %Thus, either  $v_1v_2v_4$ is a heavy triangle. 
     Applying Proposition~\ref{prop: heavy}~\ref{per1} again yields that $u_1v_1v_2v_4$ is a heavy 
$K_4$ containing at least two heavy triangles and two heavy edges. However, as $u_2v_3$ is a heavy edge (see Figure \ref{fig4}), this violates the maximality of $|M|$. 
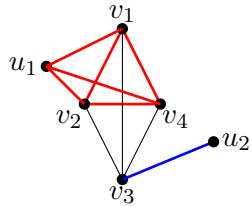
\begin{figure}[h]
    \centering
\begin{tikzpicture}[
    dot/.style={circle,fill=black,inner sep=1.5pt},
    shaded triangle/.style={
        fill=red!50,  % 浅灰色填充
        draw=black,    % 黑色边框
        line width=0.6pt
    }
]

% 定义上方四面体顶点
\coordinate (A1) at (0,2);
\coordinate (A2) at (-0.5,1);
\coordinate (A3) at (0,0);
\coordinate (A4) at (0.5,1);

% 绘制四面体
\draw[red, line width=1pt] (A1) -- (A2)-- (A4)-- (A1);
\draw (A2) -- (A3);
\draw (A1) -- (A3) -- (A4);
% 四面体顶点
\node[dot] at (A1) {};
\node[dot] at (A2) {};
\node[dot] at (A3) {};
\node[dot] at (A4) {};

% 定义下方三角形T1顶点
\coordinate (A) at (-1,1.5);

% 定义下方三角形T2顶点
\coordinate (B2) at (1.2,0.5);

\draw[blue, line width=1pt] (A3) -- (B2); 

% 下方三角形顶点
\node[dot] at (A) {};
\node[dot] at (B2) {};

% 连接线

\draw[red, line width=1pt] (A) -- (A1);
\draw[red, line width=1pt] (A) -- (A2);
\draw[red, line width=1pt] (A) -- (A4);

% 标签
\node at (-1.3,1.5) {$u_1$};
\node at (0,2.2) {$v_1$};
\node at (-0.7,0.8) {$v_2$};
\node at (0,-0.2) {$v_3$};
\node at (0.7,0.8) {$v_4$};
\node at (1.5,0.5) {$u_2$};

\end{tikzpicture}
    \caption{$v_1v_2v_4$ is a heavy triangle.}
    \label{fig4}
    \end{figure}
\end{proof}

Combining Claims \ref{claim-T-size}-\ref{claim-I-size}, we know that at most $9+\frac{3}{\mu}$ vertices are not covered by $\mathcal{R}$. This completes the proof. %Hence Lemma \ref{almost} holds. 
\end{proof}

\section{Building an absorbing set}
In this section, we discuss how to construct absorbing sets in weighted graphs using the lattice-based absorbing method. % weak absorption methods as above.
Here we adopt an approach due to Nenadov and Pehova~\cite{Nenadov2020}, which roughly tells that an absorbing set is guaranteed whenever every $4$-set of vertices %\red{$h$-vertex set?}\footnote{4-subset of vertices} 
witnesses linearly many vertex-disjoint absorbers. We use their approach in the edge-weighted setting and obtain the following result concerning heavy $K_4$, where the proof is by now standard and thus deferred to Appendix B of the arXiv version.
%We have put its certificate in the appendix.

\begin{lemma}\label{lem: abs set}
For any $\gamma>0,\,t\in(0,1)$ and $s\in\N$, there exist $\xi>0$ and $n_0\in \mathbb{N}$ such that the following holds. If $G$ is a weighted  graph on $n\geq n_0$ vertices such that  every set of four vertices has at least $\gamma n$ vertex-disjoint $(K_4,s)$-absorbers, then $G$ contains a $(K_4, \xi)$-absorbing set $A$ of size at most $\gamma n$.
\end{lemma}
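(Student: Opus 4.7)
The plan is to apply the probabilistic absorbing framework of Nenadov and Pehova~\cite{Nenadov2020} in the edge-weighted setting. For each $4$-set $S\in\binom{V(G)}{4}$, I will fix an arbitrary family $\mathcal{A}_S$ of $\gamma n$ vertex-disjoint $(K_4,s)$-absorbers for $S$, each of size at most $16s$. The target is a vertex-disjoint subfamily $\mathcal{F}''\subseteq \bigcup_S \mathcal{A}_S$ satisfying (i) $|V(\mathcal{F}'')|\le \gamma n$ and (ii) $|\mathcal{F}''\cap\mathcal{A}_S|\ge \xi n$ for every $4$-set $S$, where $\xi=\xi(\gamma,s)>0$ is to be chosen. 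Given such $\mathcal{F}''$, the absorbing set will simply be $A:=V(\mathcal{F}'')$.

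The selection is probabilistic. Sample a random vertex subset $B\subseteq V(G)$ by including each vertex independently with probability $q:=\gamma/2$, and consider the collection $\mathcal{F}_B$ of all absorbers from $\bigcup_S \mathcal{A}_S$ whose vertex set is fully contained in $B$. A Chernoff bound gives $|B|\le \gamma n$ with high probability. For each fixed $S$, the indicators $\mathbf{1}[A_S\subseteq B]$ over $A_S\in \mathcal{A}_S$ are mutually independent because $\mathcal{A}_S$ is vertex-disjoint, each with success probability at least $q^{16s}$; hence a second Chernoff bound shows $|\mathcal{F}_B\cap\mathcal{A}_S|\ge 2\xi n$ for a single $S$ with probability doubly exponential in $n$, where $\xi:=\tfrac14 \gamma q^{16s}$ suffices. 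A union bound over the $\binom{n}{4}$ choices of $S$ then gives this lower bound simultaneously for all $S$ with high probability.

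Next, clean $\mathcal{F}_B$ by discarding one absorber from each pair whose vertex sets intersect. A first-moment calculation bounds the expected number of intersecting pairs in $\mathcal{F}_B$: since within any single family $\mathcal{A}_S$ absorbers are vertex-disjoint, only cross-family pairs contribute, and using that each vertex lies in at most $O(n^3)$ absorbers one bounds the expected total number of conflicting pairs by $O(n^7 q^{16s+1})=o(n)$. With positive probability, we may therefore achieve simultaneously $|B|\le \gamma n$, $|\mathcal{F}_B\cap \mathcal{A}_S|\ge 2\xi n$ for every $S$, and total number of conflicts at most $\xi n$. Deleting one absorber per conflict leaves a vertex-disjoint family $\mathcal{F}''$ still satisfying $|\mathcal{F}''\cap \mathcal{A}_S|\ge \xi n$ for every $S$. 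Setting $A:=V(\mathcal{F}'')$, we have $|A|\le \gamma n$ and $|A|\in 4\N$ since every absorber has size divisible by $4$ (it admits a $K_4$-factor).

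To verify the absorbing property, take any $R\subseteq V(G)\setminus A$ with $|R|\le \xi n$ and $|A\cup R|\in 4\N$. Partition $R$ arbitrarily into $|R|/4$ four-sets $S_1,\ldots,S_{|R|/4}$. Since $|\mathcal{F}''\cap\mathcal{A}_{S_i}|\ge \xi n\ge |R|/4$, we greedily choose distinct $A_{S_i}\in\mathcal{F}''\cap\mathcal{A}_{S_i}$ for each $i$. Combining the $t$-heavy $K_4$-factors on each $G[A_{S_i}\cup S_i]$ (from the matched absorbers) with those on each unmatched $G[A_S]\in\mathcal{F}''$ yields the required $t$-heavy $K_4$-factor of $G[A\cup R]$. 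The main obstacle in the plan is the clean-up step: one must control the number of cross-family intersections in $\mathcal{F}_B$ without losing the linear lower bound on $|\mathcal{F}_B\cap\mathcal{A}_S|$ for each $S$. This is a careful but standard first-moment computation balancing $q$, $\gamma$, $s$, and $\xi$, and is the only step whose details depart mildly from routine, mirroring closely the framework in~\cite{Nenadov2020}.
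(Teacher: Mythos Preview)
Your overall plan (select absorbers probabilistically, then clean up conflicts) is in the spirit of the classical absorbing method, but the specific implementation you propose has a genuine gap in the clean-up step that cannot be repaired without changing the sampling scheme.

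You sample a random vertex set $B$ by including each vertex independently with \emph{constant} probability $q=\gamma/2$, and then let $\mathcal{F}_B$ be the family of all absorbers (across all $\binom{n}{4}$ families $\mathcal{A}_S$) whose vertex set is contained in $B$. The problem is that $\mathcal{F}_B$ is astronomically large. Since the $\mathcal{A}_S$ are vertex-disjoint, each vertex $v$ lies in at most one member of each $\mathcal{A}_S$, but there are $\binom{n}{4}=\Theta(n^4)$ such families; so a single vertex may lie in $\Theta(n^4)$ absorbers, not $O(n^3)$ as you claim. More importantly, even granting your $O(n^3)$ bound, the expected number of intersecting pairs you write as $O(n^7 q^{16s+1})$ is \emph{not} $o(n)$: here $q=\gamma/2$ is a fixed constant, so $q^{16s+1}$ is just a constant and the expression is $\Theta(n^7)$. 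In fact the expected number of conflicting pairs in $\mathcal{F}_B$ is polynomial in $n$ of high degree, which swamps the $2\xi n$ absorbers you have for each $S$. Deleting one absorber per conflict therefore wipes out $\mathcal{F}_B\cap\mathcal{A}_S$ entirely.

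The underlying issue is that vertex-sampling with constant probability does not make absorbers rare; it keeps a constant fraction of all $\Theta(n^5)$ absorbers inside a set of only $\Theta(n)$ vertices. The standard probabilistic absorbing arguments instead sample \emph{absorbers} (or ordered tuples) with probability $p\sim n^{1-\text{(absorber size)}}$, so that the expected family size is $\Theta(n)$ and the expected number of conflicts is $O(p^2\cdot n^{2\cdot\text{size}-1})=o(n)$; alternatively, the Nenadov--Pehova scheme you cite uses a fixed sparse bipartite template rather than vertex-sampling. The paper itself takes yet another route: it passes to an auxiliary $4$-uniform hypergraph whose edges are the $t$-heavy $K_4$'s and then quotes a black-box absorbing lemma from \cite{chang2020factors}, avoiding the probabilistic computation altogether. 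Any of these would work, but your vertex-sampling variant does not.
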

This leaves the difficult task of constructing linearly many vertex-disjoint absorbers for every four vertices, which we now discuss.

\subsection{Constructing absorbers}
%We adopt the \textit{lattice-based absorbing method} developed in~\cite{Han2016,Han2017,Keevash2015,Han2024}. % \sout{and recently are adopted by Han~\cite{Han2017}.} Prior to that, 
We first introduce a crucial notion of reachablility which was first introduced by Lo and Markstr\"om~\cite{Lo2015} and here we use a variant from~\cite{Chang2023}.% regarding the weak notion of absorbers.
\begin{defn}[Reachability \& Closedness]    
Let $m,s\in \mathbb{N},\, t\in(0,1)$ and $G$ be an $n$-vertex weighted graph. Suppose $H$ is an $h$-vertex graph.
We say that two vertices $u,v\in V(G)$ are \textit{$(H,m,s)$-reachable} if for any set $W$ of $m$ vertices, there is a set $S\subseteq V(G)\setminus W$ of size at most $hs-1$ such that both $G[S\cup\{u\}]$ and $G[S\cup\{v\}]$ contain $t$-heavy $H$-factors. We say a set $U\subseteq V(G)$ is \textit{$(H, m, s)$-closed} if every pair of vertices $u,v\in U$ is $(H, m, s)$-reachable in $G$; and $U$ is \textit{$(H, m, s)$-inner closed} if every pair of vertices $u,v\in U$ is $(H, m, s)$-reachable in the subgraph $G[U]$.
\end{defn}

The following result shows that for every $4$-vertex set $S$, one can construct linearly many vertex-disjoint absorbers, provided that $V(G)$ is closed.
This can be obtained by the following argument in~\cite{Han2024} in the edge-weighted setting, and we postpone the proof in Appendix C of the arXiv version.

\begin{lemma}\label{lem: absorber lem}
For all $t\in (0,1)$, $\beta>0$ and $s\in\N$, the following holds for sufficiently large $n\in \mathbb{N}$. Let $G$ be an $n$-vertex weighted  graph such that $V(G)$ is $(K_4,  \beta n,s)$-closed. Then every $S\in {V(G)\choose 4}$ has at least $\frac{\beta}{64s}n$ vertex-disjoint $(K_4,s)$-absorbers.
\end{lemma}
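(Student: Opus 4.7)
My plan is to build the required family of absorbers greedily, one at a time. Fix $S=\{v_1,v_2,v_3,v_4\}$ and suppose that $k$ pairwise-disjoint $(K_4,s)$-absorbers $A_1,\dots,A_k$ for $S$ have already been chosen, each disjoint from $S$ and of size at most $16s$. Set $W:=S\cup\bigcup_{i\le k}A_i$, so $|W|\le 4+16sk$; so long as $k<\beta n/(64s)$ we have $|W|\le \beta n/4+4$, which leaves plenty of slack inside the reachability budget $\beta n$. The inductive step is to produce one more absorber $A_{k+1}\subseteq V(G)\setminus W$.

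The construction of $A_{k+1}$ has two stages. First, I would produce a heavy $K_4$ on vertices $u_1,u_2,u_3,u_4\in V(G)\setminus W$ by bootstrapping the closedness hypothesis itself: pick any two vertices $x,y\in V(G)\setminus W$; since they are $(K_4,\beta n,s)$-reachable, using $W$ as the forbidden set yields a set $T\subseteq V(G)\setminus W$ of size at most $4s-1$ such that $G[T\cup\{x\}]$ admits a $t$-heavy $K_4$-factor, and any copy from that factor gives the desired $K_4$. Second, for each $i\in[4]$ in turn I would apply reachability of $v_i$ and $u_i$ with forbidden set $W\cup\{u_1,u_2,u_3,u_4\}\cup S_1\cup\cdots\cup S_{i-1}$ (whose total size is at most $\beta n/4+12s+8<\beta n$ for large $n$) to obtain a set $S_i$ of size at most $4s-1$ for which both $G[S_i\cup\{u_i\}]$ and $G[S_i\cup\{v_i\}]$ contain $t$-heavy $K_4$-factors. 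Setting $A_{k+1}:=\{u_1,u_2,u_3,u_4\}\cup\bigcup_{i=1}^{4}S_i$ gives $|A_{k+1}|\le 4+4(4s-1)=16s=4^2s$, matching the size requirement in Definition~\ref{defabs}.

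Verifying the absorber property is then immediate. A $t$-heavy $K_4$-factor of $G[A_{k+1}]$ is obtained by concatenating the factors of $G[S_i\cup\{u_i\}]$ for $i\in[4]$; these are pairwise disjoint and cover exactly $A_{k+1}$. A $t$-heavy $K_4$-factor of $G[A_{k+1}\cup S]$ is obtained instead by taking the factors of $G[S_i\cup\{v_i\}]$ for $i\in[4]$ together with the single heavy $K_4$ on $\{u_1,u_2,u_3,u_4\}$; the net effect of inserting $S$ is to replace one $K_4$ (on the $u_i$'s) by four (each $v_i$ landing with $S_i$ in place of $u_i$). Iterating the step until $k=\lfloor\beta n/(64s)\rfloor$ produces the required family of vertex-disjoint absorbers.

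There is no serious obstacle; the argument is essentially bookkeeping against a comfortable reachability budget. The one conceptual point is the swap structure built into the gadget, whereby the $u_i$'s serve as dummies that are covered in one factor by the $K_4$ $\{u_1,u_2,u_3,u_4\}$ and in the other by the $S_i$-reachability factors. The only delicate spot is ensuring in the first stage that a heavy $K_4$ disjoint from the growing set $W$ always exists, but this is handled by reapplying the closedness hypothesis itself to any pair of vertices outside $W$. The constant $1/64$ is quite loose, since only $16s\cdot\beta n/(64s)=\beta n/4$ vertices are ultimately consumed.
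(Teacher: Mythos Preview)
Your proposal is correct and follows essentially the same route as the paper's proof: greedily build absorbers of the form $\{u_1,u_2,u_3,u_4\}\cup S_1\cup\cdots\cup S_4$, where $\{u_1,\dots,u_4\}$ spans a heavy $K_4$ and each $S_i$ is a $K_4$-connector for $u_i,v_i$, stopping only when $\beta n/(64s)$ absorbers have been found. The one place where you are actually more careful than the paper is Stage~1: the paper simply asserts that closedness yields a heavy $K_4$ avoiding the forbidden set, whereas you explicitly extract it from a reachability witness $T\cup\{x\}$.
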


Based on Lemma~\ref{lem: abs set} and Lemma~\ref{lem: absorber lem}, a key step of constructing linearly many vertex-disjoint absorbers is to show that $V(G)$ is a closed set. Here we are able to replace this property with a slightly weaker one in the following, which is the bulk of our task. %\red{put Lem5.10 in the front and postpone its proof to a new subsection})
We present its proof to the next section.

\begin{lemma}\label{main lemma}
For all $\mu>0$ and $t\in (0,1)$, there exist $\gamma_0, \beta>0$ and an integer $n_0\in \mathbb{N}$ such that the following holds for all $n\geq n_0$ and $\gamma\le\gamma_0$. Let $G$ be an $n$-vertex weighted  complete graph with $\de^w(G)\ge (\frac{1+3t}{4}+\mu)n$. Then $V(G)$ admits a partition $V(G)=B\cup U$ such that $|B|\le \gamma n$ and $U$ is $(K_4,\beta n,32)$-inner closed.
\end{lemma}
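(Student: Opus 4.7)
The plan is a three-stage reduction: peel off a small \emph{bad} set $B$, coarsely partition the remainder $U$ into at most two parts each with strong internal reachability, and then \emph{merge} those parts via a transferral argument.

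First, I would isolate $B$ using the technical input Lemma \ref{lem: two reachable from three}, which asserts that among any three vertices of $V(G)$ at least two are $(K_4,\beta n,1)$-reachable. The non-$1$-reachability relation on $V(G)$ is therefore triangle-free, so taking $B$ to be the set of vertices having fewer than $\gamma n$ $1$-reachable partners, a Mantel-type count forces $|B|\le\gamma n$. Set $U:=V(G)\setminus B$; every vertex in $U$ is $1$-reachable to linearly many vertices of $U$.

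Next, I would invoke Lemma \ref{lem: partition lem} to obtain a partition $U=V_1\cup V_2$ (with $V_2$ possibly empty) such that every pair within a single part is $(K_4,\beta n,4)$-reachable in $G[U]$. The idea is to greedily cluster by iterated $1$-reachability; a second invocation of Lemma \ref{lem: two reachable from three} forbids three mutually non-reachable classes, forcing at most two parts, and composing short $1$-reachability walks promotes the inner reachability to parameter $4$. If $V_2=\emptyset$, we are done comfortably. Otherwise stage three carries the bulk of the work.

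To merge $V_1$ and $V_2$ I would apply Lemma \ref{lem: merge lem}, which reduces the task to producing a transferral across $(V_1,V_2)$: two heavy $K_4$s in $G[U]$ whose vertex-distributions over $\{V_1,V_2\}$ are distinct. Apply the multicolored degree-form regularity lemma (Lemma \ref{thm: degree form of RL}) to $G[U]$ and pass to the reduced graph $R$, whose clusters inherit a label in $\{V_1,V_2\}$ by majority and whose weighted minimum degree is still at least $(\tfrac{1+3t}{4}+\tfrac{\mu}{2})k$ by Lemma \ref{lem: degree of reduced graph}. Running the almost-cover Lemma \ref{almost} inside $R$ produces many heavy $K_4$s, each of which lifts through the Embedding Lemma \ref{lem: embedding lem} to $\beta n$ vertex-disjoint heavy $K_4$s in $G$ of the prescribed $(V_1,V_2)$-type. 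Summing the cost of $4$-reachability inside each part with the transferral cost contributed by the merge step then produces the claimed inner-closedness parameter $32$.

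The main obstacle is excluding the degenerate scenario in which \emph{every} heavy $K_4$ in $R$ has the same $(V_1,V_2)$-type, which would preclude any transferral. This is precisely where the geometric perspectives of Propositions \ref{prop: heavy} and \ref{prop: heavy triangle} intervene. Starting from any heavy $K_4$, Proposition \ref{prop: heavy}\ref{per1} supplies two distinct heavy triangles inside it; Proposition \ref{prop: heavy triangle}\ref{quarter1} applied to each such heavy triangle then yields at least $k/4$ alternative fourth-vertex choices extending it to another heavy $K_4$. Since both parts contribute linearly many clusters in $R$, at least one such rebalancing hits a cluster in the opposite part and flips the $(V_1,V_2)$-type of the tetrahedron, producing the second distribution class needed for the transferral. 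Verifying that the pair of distributions obtained is genuinely irreducible (so that it yields actual merging rather than a trivial lattice relation) and handling the boundary cases $(4,0)$ versus $(0,4)$, where Proposition \ref{prop: heavy triangle}\ref{quarter2} is invoked to bring in a ``swing'' vertex from the opposite part, is expected to be by far the most intricate part of the argument.
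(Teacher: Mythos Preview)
Your three-stage outline matches the paper's strategy, and Stages~1--2 are essentially right (though the paper handles Stage~1 by a cleaner dichotomy: if some vertex $v$ has fewer than $\gamma n$ reachable partners, it sets $B:=\{v\}\cup\tilde N(v)$ and observes that $U=V(G)\setminus B$ is already $(K_4,\beta n,1)$-inner closed directly from Lemma~\ref{lem: two reachable from three}, so no merging is needed; otherwise $B=\emptyset$ and one works on all of $V(G)$). The substantive gap is in Stage~3.

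Your ``rebalancing'' argument does not close. Proposition~\ref{prop: heavy triangle}\ref{quarter1} gives $k/4$ fourth-vertex choices extending a given heavy triangle, but ``both parts contribute linearly many clusters'' only means $k_i\ge \tfrac{\gamma}{2}k$ with $\gamma\ll 1$; nothing forces any of the $k/4$ extensions into the opposite part. Concretely, suppose every heavy $K_4$ in $R$ has index $(2,2)$. A heavy triangle inside such a $K_4$ has type $(2,1)$ or $(1,2)$; the $k/4$ extensions of a $(2,1)$-triangle could all lie in $\mathcal V_2$ (since $k_2\approx k/2\ge k/4$), producing only further $(2,2)$-copies and no transferral. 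Running Lemma~\ref{almost} inside $R$ gives many heavy $K_4$s but says nothing about their types. (Also, Proposition~\ref{prop: heavy}\ref{per1} does not assert that an \emph{arbitrary} heavy $K_4$ contains two heavy triangles; it applies only to $K_4$'s built by attaching a vertex $u$ with $w(u,T)>3t$ to a heavy triangle $T$.)

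The paper replaces this with a three-case endgame in $R$. First, if one part has $k_i<k/4$, repeated use of Proposition~\ref{prop: heavy triangle}\ref{quarter1} builds heavy $K_4$s of types $(1,3)$ and $(0,4)$ directly (Claim~\ref{claim: no small partition}). Second, if $R$ contains a \emph{crossing heavy triangle}, a delicate analysis introducing auxiliary clusters $Y_1,Y_2$ and exploiting that the two heavy triangles supplied by Proposition~\ref{prop: heavy}\ref{per1} \emph{share an edge} manufactures two heavy $K_4$s whose index vectors differ by $\mathbf u_1-\mathbf u_2$ (Claim~\ref{claim: no crossing heavy triangle}). Finally, if neither holds---both parts have size $\ge k/4$ and there is no crossing heavy triangle---an averaging argument shows every crossing edge in $R$ has weight at most $\tfrac{3t-1}{2}$ (Claim~\ref{claim: weight of crossing edge}); plugging this into the weighted degree of a cluster in the smaller part contradicts $\delta^w(R)\ge(\tfrac{1+3t}{4}+\tfrac\mu4)k$. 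This crossing-edge-weight bound and the resulting degree contradiction are the decisive quantitative ingredients your sketch is missing.
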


In this case, we apply Lemma~\ref{lem: abs set} and Lemma~\ref{lem: absorber lem} on $G[U]$ to obtain an absorbing set in $U$. It remains to handle vertices in $B$. For this, we use a result of Balogh, Kemkes, Lee and Young~\cite{BKLY}, which ensures that every vertex is covered by $\Omega(n^3)$ copies of heavy $K_4$. %  which we accomplish by covering each vertex with a family of vertex-disjoint heavy $K_4$. This can be achieved since Balogh, Kemkes, Lee and Young~\cite{BKLY} prove that each vertex is contained in a constant fraction of heavy $K_4$.}
%\red{(Put this part somewhere after the statement of Lem5.10 in section 5.2.)} Now, we are ready to give the proof of Lemma \ref{abs}. 

\begin{lemma}[\cite{BKLY}]\label{lem: cover threshold}
For every $t\in (0,1)$ and $r\geq 3$, if $G$ is an $n$-vertex weighted complete graph with minimum weighted degree at least $\de n$, then every vertex is in at least $\left(1-\frac{1-\de}{1-t}\right)\binom{n-1}{r-1}$ $t$-heavy $K_r$s.
\end{lemma}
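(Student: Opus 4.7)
The plan is a double-counting argument bounding the complement: fix $v\in V(G)$ and let $N_v$ denote the number of $(r-1)$-subsets $S\subseteq V(G)\setminus\{v\}$ for which $G[S\cup\{v\}]$ is \emph{not} $t$-heavy, i.e.\ $w(S\cup\{v\})\le \binom{r}{2}t$. Since every $K_r$ at $v$ arises from a unique such $S$, it suffices to prove $N_v\le \frac{1-\delta}{1-t}\binom{n-1}{r-1}$, whereupon the heavy-$K_r$ count at $v$ is at least $\binom{n-1}{r-1}-N_v\ge \left(1-\frac{1-\delta}{1-t}\right)\binom{n-1}{r-1}$.

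The first step is to evaluate $\sum_{S}w(S\cup\{v\})$ by exchanging the order of summation. An edge $vu$ at $v$ appears in all $S$ containing $u$, contributing with multiplicity $\binom{n-2}{r-2}$; an edge $uu'$ not at $v$ appears in all $S$ containing both endpoints, contributing with multiplicity $\binom{n-3}{r-3}$. Hence
\[
\sum_{S}w(S\cup\{v\})=\binom{n-2}{r-2}d^w(v)+\binom{n-3}{r-3}\sum_{e\in E(G-v)}w(e).
\]
Applying the minimum weighted degree condition in two ways — directly $d^w(v)\ge \delta n$, and through $\sum_{e\in E(G-v)}w(e)=\tfrac12\sum_{u\ne v}(d^w(u)-w(uv))\ge \tfrac12((n-1)\delta n-d^w(v))$ — and using the identity $(n-2)\binom{n-3}{r-3}=(r-2)\binom{n-2}{r-2}$, a short calculation collapses the lower bound to
\[
\sum_{S}w(S\cup\{v\})\ge \delta n\cdot \binom{n-2}{r-2}\cdot\tfrac{r}{2}=\binom{r}{2}\delta\cdot\tfrac{n}{n-1}\binom{n-1}{r-1}\ge \binom{r}{2}\delta\binom{n-1}{r-1}.
\]

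The second step is a deficit inequality. For each non-heavy $K_r$ at $v$ we have $\binom{r}{2}-w(S\cup\{v\})\ge \binom{r}{2}(1-t)$, while trivially $\binom{r}{2}-w(S\cup\{v\})\ge 0$ for all other $S$, so summing and combining with the bound above gives
\[
N_v\binom{r}{2}(1-t)\le \binom{r}{2}\binom{n-1}{r-1}-\sum_{S}w(S\cup\{v\})\le \binom{r}{2}(1-\delta)\binom{n-1}{r-1},
\]
which rearranges to the target inequality. The only substantive point to be careful about is that the internal contribution $\binom{n-3}{r-3}\sum_{E(G-v)}w$ must be retained: using only the edges at $v$ yields a lower bound of order $(r-1)\delta\binom{n-1}{r-1}$, short of the required $\binom{r}{2}\delta\binom{n-1}{r-1}$, so applying the minimum weighted degree to all other vertices (not just $v$) is what makes the inequality tight. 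Beyond this bookkeeping, no conceptual obstacle is expected.
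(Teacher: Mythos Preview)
Your argument is correct. The double-counting and deficit bound go through exactly as you describe; the key identity $(n-2)\binom{n-3}{r-3}=(r-2)\binom{n-2}{r-2}$ indeed collapses the two contributions to $\tfrac{r}{2}\,\delta n\binom{n-2}{r-2}$, and the rest is straightforward.

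Note, however, that the paper does not give its own proof of this lemma: it is quoted from~\cite{BKLY} and used as a black box. So there is nothing in the present paper to compare your approach against. Your proposal stands as a clean, self-contained proof of the cited result.
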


Now, we give a quick proof of Lemma~\ref{abs}. 

\begin{proof}[\textbf {Proof of Lemma~\ref{abs}}]
Given $t\in(0,1)$ and $\mu>0$, we choose $\tfrac{1}{n}\ll \xi\ll \beta\ll \gamma\ll \mu$ 
and let $G$ be an  $n$-vertex weighted complete graph  with $\de^w(G)\ge (\frac{1+3t}{4}+\mu)n$.
By Lemma \ref{main lemma}, %there exists a constant $\beta\,(< \gamma)$ such  that
$V(G)$ admits a partition $\mathcal{P}=\{B,U\}$ such that $|B|\le \frac{\gamma}{9} n$ and $U$ is $(K_4,\beta n,32)$-inner closed. Lemma \ref{lem: absorber lem} implies that every $S\in {U\choose 4}$ has at least $\frac{\beta}{2^{11}}n$ vertex-disjoint $(K_4,32)$-absorbers. Together with Lemma \ref{lem: abs set}, we obtain that %there is a constant $\xi>0$ such that 
$G[U]$ contains a $(K_4,  \xi)$-absorbing set $A$ of size at most $\frac{\beta}{2^{11}} n$. 

On the other hand, by Lemma~\ref{lem: cover threshold}, we know that every vertex in $G$ lies in at least $\frac{1}{4}\binom{n-1}{3}$  heavy $K_4$s. Hence, we can greedily choose $|B|$ vertex-disjoint heavy $K_4$s in $G-A$ each of which containing exactly one vertex in $B$ as $|B|\le \frac{\gamma}{9} n$. Let $F$ be the vertex set of those vertex-disjoint heavy $K_4$s. Since $\beta\ll \gamma$, the set $A\cup 
F$ forms a $(K_4,  \xi)$-absorbing set of size at most $\gamma n$ in $G$, as desired.
\end{proof}

\subsection{Reachability and Latticed-based Absorption}

The proof of Lemma \ref{main lemma} is technical; a high-level outline is as follows.
\begin{enumerate}
    \item Show that for every three vertices, at least two of them are $1$-reachable (see Lemma~\ref{lem: two reachable from three}). 
    \item Prove that $V(G)$ admits a partition $\mathcal{P}$ with at most $2$ parts, each of which is closed (see Lemma~\ref{lem: partition lem}).
    \item The key step is 
the merging process, where we first apply the multicolor version regularity lemma, then employ transferrals in the reduced graph  (see Lemma~\ref{lem: merge lem}) and finally invoke an embedding lemma (see Lemma~\ref{lem: embedding lem}). 
%regularity  Show that two  closed parts in $\mathcal{P}$ can be mergerd by    (see Lemma~\ref{lem: merge lem}). 
\end{enumerate}

\begin{lemma}\label{lem: two reachable from three}
Given $t\in (0,1)$, $\beta,\mu>0$ with $\be< \frac{\mu}{2}$, there exists an integer %positive constants $\beta\ll \mu$ and 
$n_0\in \mathbb{N}$ such that the following holds for every $n\ge n_0$. Let $G$ be an $n$-vertex weighted  complete graph with $\de^w(G)\ge (\frac{1+3t}{4}+\mu)n$. Then among every three vertices of $V(G)$, there exist two vertices that are $(K_4,\beta n,1)$-reachable.
\end{lemma}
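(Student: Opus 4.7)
The plan is to argue by contradiction. Assume that none of the three pairs $\{a,b\}, \{a,c\}, \{b,c\}$ is $(K_4,\beta n,1)$-reachable. Unpacking the definition of reachability with $s=1$, for each pair $\{u,v\}\subset\{a,b,c\}$ there exists a set $W_{uv}\subseteq V(G)$ with $|W_{uv}|=\beta n$ such that no triple $T\subseteq V(G)\setminus(W_{uv}\cup\{u,v\})$ yields both $\{u\}\cup T$ and $\{v\}\cup T$ as heavy $K_4$'s simultaneously. Put $W:=W_{ab}\cup W_{ac}\cup W_{bc}\cup\{a,b,c\}$, so $|W|\le 3\beta n+3$, and $V^*:=V(G)\setminus W$. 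The crucial structural consequence is that for every triple $T\in\binom{V^*}{3}$, at most one of $\{a\}\cup T$, $\{b\}\cup T$, $\{c\}\cup T$ is a heavy $K_4$; otherwise one of the three non-reachability conditions would be violated.

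The strategy is to combine two complementary counting inputs with this structural property. First, apply Lemma~\ref{lem: cover threshold} to each subgraph $G[V^*\cup\{v\}]$ for $v\in\{a,b,c\}$; since $|W|\le 3\beta n+3$, the minimum weighted degree of $G[V^*\cup\{v\}]$ is at least $(\delta-O(\beta))n$ with $\delta=(1+3t)/4+\mu$, yielding at least $\left(\tfrac{1}{4}+\tfrac{\mu}{1-t}-O(\beta)\right)\binom{|V^*|}{3}$ heavy $K_4$'s through $v$ with remaining vertices in $V^*$. Summing over $v\in\{a,b,c\}$ and invoking the inequality
\[
\sum_{v\in\{a,b,c\}}\bigl|\{T\in\tbinom{V^*}{3}:\{v\}\cup T\text{ is heavy}\}\bigr|\;\le\; \binom{|V^*|}{3}
\]
implied by the structural property gives $\tfrac{3}{4}+\tfrac{3\mu}{1-t}\le 1+O(\beta)$. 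Second, double-count the weighted sum $\Sigma:=\sum_{T\in\binom{V^*}{3}}\sum_{v\in\{a,b,c\}}w(\{v\}\cup T)$: the structural property implies $\Sigma\le(6+12t)\binom{|V^*|}{3}$ (one $K_4$ of weight at most $6$, two of weight at most $6t$), while expanding $w(\{v\}\cup T)=w(v,T)+w(T)$ and using the degree hypothesis lower-bounds $\Sigma$ by roughly $18\delta\binom{|V^*|}{3}$ asymptotically.

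The main expected obstacle is that each inequality alone is borderline tight near the extremal configuration from the introduction, and the slack shrinks toward zero in the regime where both $\mu$ and $t$ are small. To handle this uniformly in $\mu>0$ and $t\in(0,1)$ under $\beta<\mu/2$, I expect to sharpen the weighted upper bound: the per-triple bound $6+12t$ is attained only when $w(T)$ is close to $3$ and the unique heavy $K_4$ has all edges of weight near $1$; but if two of $\{b\}\cup T, \{c\}\cup T$ are non-heavy then $w(bT), w(cT)\ge 0$ forces $w(T)\le 6t$, tightening the per-triple bound to $3+18t$ whenever $6t<3$. Comparing this sharpened upper bound against the lower bound $\Sigma\ge 18\delta\binom{|V^*|}{3}$ already yields a contradiction for $t\le 1/3$. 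The remaining intermediate regime is expected to need a case split on the weights of the edges within $\{a,b,c\}$ (i.e.\ on whether $\{a,b,c\}$ itself forms a heavy triangle), using Propositions~\ref{prop: heavy} and~\ref{prop: heavy triangle} to show that many heavy $K_4$'s through each of $a,b,c$ share a common rigid local structure that becomes incompatible with the "at most one heavy per triple" condition once the minimum-weighted-degree slack is exploited.
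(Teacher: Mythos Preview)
Your counting strategy is a different route from the paper's, but it does not close, and you acknowledge as much. Concretely, your two estimates together cover only $t<\tfrac13+4\mu$ (via the sharpened per-triple bound $3+18t$, valid for $t<\tfrac12$) and $t>1-12\mu$ (via the crude bound $6+12t$ or, equivalently, the cover-threshold count); the entire interval $[\tfrac13+4\mu,\,1-12\mu]$ is left open when $\mu$ is small. The proposed rescue---a case split on $w(ab),w(ac),w(bc)$ together with Propositions~\ref{prop: heavy} and~\ref{prop: heavy triangle}---is not a plan: those propositions locate one heavy face or one extending vertex at a time and do not interface with an average over all $T\in\binom{V^*}{3}$. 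Your per-triple upper bounds are pointwise sharp (take $w(T)=\min(3,6t)$, $w(a,T)=3$, $w(b,T)=w(c,T)=0$), so no further tightening of $\Sigma$ is available without importing structural information that the averaging has already discarded.

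The paper abandons averaging and argues constructively. Inside your contradiction framing (which is the right setup), pick $v_4\in V^*$ maximising $w(v_4,\{v_1,v_2,v_3\})$ among vertices with $w(\,\cdot\,,\{v_1,v_2,v_3\})>3t$, then $v_5\in V^*$ with $w(v_5,\{v_1,\ldots,v_4\})>1+3t$ via Proposition~\ref{prop: heavy triangle}\ref{quarter2}. Writing $x:=w(v_4v_5)$, the maximality of $v_4$ gives $w(\{v_1,v_2,v_3\},\{v_4,v_5\})>2(1+3t-x)$; but if at most one of the triangles $v_iv_4v_5$ were heavy, that sum would be at most $2+2(3t-x)$, a contradiction. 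Hence two of them, say $v_1v_4v_5$ and $v_2v_4v_5$, are heavy, and one more application of Proposition~\ref{prop: heavy triangle}\ref{quarter2} to $\{v_1,v_2,v_4,v_5\}$ yields $v_6$ with $w(v_6,\{v_1,v_2,v_4,v_5\})>1+3t$; then $\{v_4,v_5,v_6\}$ simultaneously certifies heaviness of both $K_4$'s on $\{v_1,v_4,v_5,v_6\}$ and $\{v_2,v_4,v_5,v_6\}$. This works uniformly for all $t\in(0,1)$ with no case analysis, and uses $\beta<\mu/2$ only to ensure that $v_5,v_6$ can be chosen outside $W$.
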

\begin{proof}%[\textbf{Proof}]
Given constants $t\in(0,1),\,\beta,\mu>0$ with $\beta<\frac{\mu}{2}$, we choose $\frac{1}{n}\ll \beta,\mu$ and fix any three distinct vertices $v_1, v_2, v_3$ in $V(G)$. Let $W$ be  an arbitrary set of size $\beta n$ excluding vertices $v_1,v_2,v_3$ and set
$V':=V(G)\setminus (W\cup\{v_1, v_2, v_3\})$. Define $X:=\{v\in V': w(v, v_1v_2v_3)>3t\}$.
Since $\de^w(G)\ge (\frac{1+3t}{4}+\mu)n$, Proposition~\ref{prop: heavy triangle}~\ref{quarter1} implies 
$|X|\ge \frac{1}{4}n-\be n$. Choose $v_4\in X$ such that $$w(v_4, v_1v_2v_3)=\max\{w(v,v_1v_2v_3):v\in X\}.$$ % is maximal among all vertices from $X$. 
%Notice that $$w(v_1v_2v_3v_4,V')\ge (1+3t+4\mu)n-2w(v_1v_2v_3v_4).$$ 
It follows from Proposition~\ref{prop: heavy triangle}~\ref{quarter2} that $|\{v\in V'\setminus\{v_4\}:w(v_1v_2v_3v_4,v)>1+3t\}|\geq \mu n-\be n$. Recall that $\beta< \frac{\mu}{2}$, there exists a vertex $v_5\in V'\setminus\{v_4\}$ such that  $w(v_1v_2v_3v_4,v_5)>1+3t$.%there are at least $\phi n$ vertices $v_5$ such that $w(v_1v_2v_3v_4,v_5)>1+3t$.

For convenience, we set $w(v_4v_5)=x$.
By the choice of $v_4$, we know that $3\geq w(v_1v_2v_3,v_4)\ge w(v_1v_2v_3,v_5)>1+3t-x$.
Therefore, $x>3t-2$ and 
\begin{align}\label{weight}
w(v_1v_2v_3,v_4v_5)=w(v_1v_2v_3,v_4)+w(v_1v_2v_3,v_5)>2(1+3t-x).
\end{align}
%\begin{claim}\label{two heavy triangles}
We claim that there are at least two heavy triangles among $v_1v_4v_5, v_2v_4v_5$ and $v_3v_4v_5$. Otherwise, 
%\end{claim}
%\begin{proof}%[\textbf{Proof of Claim~\ref{two heavy triangles}}]
%Suppose to the contrary that 
at most one of  $v_1v_4v_5, v_2v_4v_5$ and $v_3v_4v_5$ is a heavy triangle in $G$. Then 
\[
w(v_1v_2v_3,v_4v_5)=\sum_{i\in [3]}w(v_i,v_4v_5)\le 2+2(3t-x),
\]
which contradicts with \eqref{weight}.
%Suppose first that there is no $t$-heavy triangles. %, then we have that 
%Then $w(v_1v_2v_3,v_4v_5)\le 9t-3x$. Combining this with (\ref{weight}) implies $x<3t-2$, a contradiction. %. This is a contradiction, because 
%\[
%w(v_1v_2v_3,v_5)>1+3t-x>1+3t-(3t-2)=3.
%\]
%Without loss of generality, we assume that $w(v_1v_4v_5)>3t$.
%If neither $v_2v_4v_5$ nor $v_3v_4v_5$ is a $t$-heavy triangle, combining (\ref{weight}) gives that
%\[
%2(1+3t-x)<w(v_1v_2v_3,v_4v_5)\le 2+6t-2x
%\]
%which is a contradiction.
%\end{proof}

Assume, without loss of generality, that both $v_1v_4v_5$ and $v_2v_4v_5$ are heavy triangles.
%Since 
%\[
%w(v_1v_2v_4v_5,V')\ge (1+3t+4\mu)n-2w(v_1v_2v_4v_5),
%\]
%there are at least $\phi n$ vertices $u\in V'$ such that $w(v_1v_2v_4v_5,u)> 1+3t$. 
Applying Proposition~\ref{prop: heavy triangle}~\ref{quarter2} again yields 
$$
|\{v\in V'\setminus \{v_4,v_5\}:w(v_1v_2v_4v_5,v)>1+3t\}|\geq \mu n-\be n-1>0\ \text{as}\  \beta<\frac{\mu}{2}.
$$
Choose $v_6\in V'\setminus  \{v_4,v_5\}$ such that  $w(v_1v_2v_4v_5,v_6)>1+3t$. 
Hence, both $v_1v_4v_5v_6$ and $v_2v_4v_5v_6$ are heavy $K_4$s in $G$. That is, $v_1$ and $v_2$ are $(K_4,\be n, 1)$-reachable as
there is a 3-set $S=\{v_4,v_5,v_6\}\subseteq V(G)\setminus W$ such that both $G[S\cup\{v_1\}]$ and $G[S\cup\{v_2\}]$ are heavy $K_4$'s, as required. 
\end{proof}
%For a graph with the minimum weighted degree, in order to obtain a closed set, we need to find a closed partition of the vertex set provided that every vertex is reachable to linear number of vertices.
Our next lemma states that we can find a reachability partition with at most $2$ parts.

\begin{lemma}[{Reachability partition}]\label{lem: partition lem}
For any constants $\gamma,\beta',\mu>0$ with $\beta'<\frac{\mu}{2}$ and $t\in (0,1)$, there exist $\beta>0$ and $n_0\in \mathbb{N}$ such that the following holds for all $n\geq n_0$.
Let $G$ be an $n$-vertex weighted complete  graph such that {$\de^w(G)\ge (\frac{1+3t}{4}+\mu)n$} and every vertex in $V(G)$ is $(K_4,\beta' n, 1)$-reachable to at least $\gamma n$ other vertices. Then either  $V(G)$ is $(K_4,\beta n,2)$-closed, or there is a partition $\mathcal{P}=\{V_1, V_2\}$ of $V(G)$ such that $V_i$ is $(K_4,\beta n,4)$-closed and has size at least $\frac{\gamma}{2}n$ for every $i\in [2]$. %Then there is a partition $\mathcal{P}=\{V_1, V_2\}$ of $V(G)$ such that for each $i\in [2]$, $V_i$ is $(K_4,\beta n,s)$-closed and has size at least $\frac{\gamma}{2}n$.
\end{lemma}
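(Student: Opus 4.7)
The plan is to work with the auxiliary \emph{$1$-reachability graph} $H$ on $V(G)$, defined by letting $uv\in E(H)$ iff $u$ and $v$ are $(K_4,\beta' n,1)$-reachable in $G$. The hypothesis of the lemma gives $\delta(H)\ge \gamma n$, while Lemma \ref{lem: two reachable from three} says that every triple of vertices contains an edge of $H$; equivalently, the complement $\overline H$ is triangle-free. This triangle-free complement is the structural lever that will bound the number of reachability classes by two.

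The core chaining observation I would establish first is: if $uv\notin H$ and $|N_H(u)\cap N_H(v)|\ge \beta n + 10$, then $u$ and $v$ are $(K_4,\beta n,2)$-reachable. Indeed, for a forbidden set $W$ with $|W|\le \beta n$, choose a common $H$-neighbour $w\in N_H(u)\cap N_H(v)$ outside $W\cup\{u,v\}$. Applying the $1$-reachability of $(u,w)$ with forbidden set $W\cup\{v\}$ yields a $3$-set $S_1$ so that $\{u\}\cup S_1$ and $\{w\}\cup S_1$ are heavy $K_4$s; applying the $1$-reachability of $(v,w)$ with forbidden set $W\cup S_1\cup\{u,w\}$ yields a disjoint $3$-set $S_2$. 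Then $S:=S_1\cup S_2\cup\{w\}$ has size $7$ and certifies $(K_4,\beta n,2)$-reachability, since $\{u\}\cup S$ splits into the heavy $K_4$s $\{u\}\cup S_1$ and $\{w\}\cup S_2$, and symmetrically for $v$. This works provided $\beta\le \beta'/2$ so that the enlarged forbidden sets stay below $\beta' n$.

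With this tool, I would split into two cases. In Case 1, every non-edge $uv$ of $H$ satisfies $|N_H(u)\cap N_H(v)|\ge \gamma n/2$; since edges of $H$ are automatically $1$-reachable and hence $2$-reachable, the chaining observation implies that $V(G)$ is $(K_4,\beta n,2)$-closed, giving the first alternative. In Case 2, there exist $u_1,u_2$ with $u_1u_2\notin H$ and $|N_H(u_1)\cap N_H(u_2)|<\gamma n/2$. Set $V_1:=N_H(u_1)\cup\{u_1\}$ and $V_2:=V(G)\setminus V_1$. Since $u_1u_2\notin H$, we have $u_2\in V_2$, and triangle-freeness of $\overline H$ applied to the triple $\{u_1,u_2,w\}$ forces $wu_2\in H$ for every $w\in V_2\setminus\{u_2\}$; hence every vertex of $V_i$ is $1$-reachable to the ``hub'' $u_i$. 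The size bounds come for free: $|V_1|\ge \delta(H)+1\ge \gamma n$, and $|V_2|\ge 1+|N_H(u_2)\setminus N_H(u_1)|\ge 1+\gamma n/2$.

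Inside each $V_i$, the chaining argument from Case 1, now with $u_i$ playing the role of the common $H$-neighbour, certifies $(K_4,\beta n,2)$-reachability (hence $(K_4,\beta n,4)$-reachability) for every pair in $V_i$, yielding the second alternative. The main technical nuisance, rather than a genuine obstacle, is the bookkeeping of forbidden vertices: each application of $1$-reachability must avoid $W$, the other endpoint, the hub $u_i$, and any $3$-set already selected, so $\beta$ must be chosen sufficiently small relative to $\beta'$ to keep all enlarged forbidden sets below $\beta' n$.
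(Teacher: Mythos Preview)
Your framework is essentially the paper's: work in the $1$-reachability graph $H$, exploit that $\overline{H}$ is triangle-free (Lemma~\ref{lem: two reachable from three}), and chain through common $H$-neighbours. The gap is in Case~2. Your chaining observation correctly requires $\Omega(\beta n)$ common $H$-neighbours so that one can be chosen \emph{outside} the forbidden set $W$; but in Case~2 you only exhibit the single fixed hub $u_i$ as a common neighbour for pairs in $V_i$, and when $u_i\in W$ the chain cannot be built. Your $V_2$ is in fact fine for a different reason you did not state: for $a,b\in V_2\setminus\{u_2\}$ the triple $\{a,b,u_1\}$ has $au_1,bu_1\notin H$, so $ab\in H$, and $V_2$ is an $H$-clique, hence $(K_4,\beta' n,1)$-closed outright. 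The genuine problem is $V_1=N_H(u_1)\cup\{u_1\}$: a non-$H$-edge $ab$ inside $V_1$ only forces one endpoint into $N_H(u_1)\cap N_H(u_2)$, and you have no lower bound on $|N_H(a)\cap N_H(b)|$ beyond the single vertex $u_1$.

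The paper repairs this by choosing the distinguished pair differently and introducing an intermediate scale $\beta\ll\beta_1\ll\gamma$: pick $v_1,v_2$ that are not $(K_4,\beta_1 n,2)$-reachable. Your own chaining observation (their Fact~\ref{fact: 2-reachable}) then forces $|N_H(v_1)\cap N_H(v_2)|\le\beta_1 n$, i.e.\ the intersection is \emph{tiny}, not merely below $\gamma n/2$. Setting $U_i:=(N_H(v_i)\cup\{v_i\})\setminus N_H(v_{3-i})$, triangle-freeness of $\overline H$ applied to $\{a,b,v_{3-i}\}$ shows each $U_i$ is an $H$-clique, hence $(K_4,\beta' n,1)$-closed without any chaining. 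The leftover $U_0\subseteq N_H(v_1)\cap N_H(v_2)$ has size at most $\beta_1 n$, so each $v\in U_0$ still has $|N_H(v)\setminus U_0|\ge \gamma n-\beta_1 n>2\beta_1 n$; by pigeonhole it has $>\beta_1 n$ $H$-neighbours in some $U_i$, and now the chaining observation legitimately makes $v$ $(K_4,\beta_1 n,2)$-reachable to all of $U_i$. Pairs among the redistributed vertices then need one more chaining step, which is exactly why the lemma outputs $4$-closed parts rather than $2$-closed.
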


%\begin{fact}\label{fact: 4 vertices heavy triangle}\red{better to put all these subtle facts together (with those in Claim 4.1) in one place?}
%Given a weighted graph $G$ and any four vertices $u_i\in V(G)$ where $i\in [4]$.
%If $u_1u_2u_3$ is a $t$-heavy triangle and $w(u_1u_2u_3,u_4)\ge3t$, then there exists another $t$-heavy triangle.
%\end{fact}

In this case, we shall use the lattice-based absorbing method to show that $V_1\cup V_2$ is in fact closed. To illustrate this, we introduce some definitions that were originally introduced by Keevash and Mycroft~\cite{Keevash2015}, and we extend them to the edge-weighted setting.

\begin{defn}\label{lattice}
Let $G$ be an $n$-vertex  weighted graph and $\mathcal{P} = \{V_{1},\ldots,V_{k}\}$ be a vertex partition of $V(G)$ for some integer $k \ge 1$.
  \begin{enumerate}
     \item[{\rm (i)}] The \textit{index vector} $\mathbf{i}_{\mathcal{P}}(S) \in \mathbb{Z}^{k}$ of a subset $S \subseteq V(G)$ with respect to $\mathcal{P}$ is the vector whose $i$th coordinate is the size of the intersections of $S$ with $V_i$, i.e. $|S\cap V_i|$, for every $i\in[k]$.
     \item[{\rm (ii)}] Given an $h$-vertex graph $H$ and constants $\beta > 0$,  $t\in (0,1)$, an $h$-vector $\mathbf{i} \in \mathbb{Z}^{k}$ is $(H,\beta)$-\textit{robust} if for any set $W\subseteq V(G)$ of size $\be n$, the graph $G-W$ contains a $t$-heavy copy of $H$ whose vertex set has the index vector $\mathbf{i}$ with respect to $\mathcal{P}$.
     \item[{\rm (iii)}] Denote by $I_{\mathcal{P}}^{\beta}(G,H)$ the set of all $(H,\beta)$-robust $h$-vectors $\mathbf{i} \in \mathbb{Z}^{k}$. 
     \item[{\rm (iv)}] For every $j\in [k]$, let $\mathbf{u}_{j} \in \mathbb{Z}^{k}$ be the $j$th unit vector, i.e. $\mathbf{u}_j$ has $1$ on the $j$th coordinate and 0 on the other coordinates.
     \item[{\rm (v)}] A \textit{transferral} is a vector of the form $\mathbf{u}_{i}-\mathbf{u}_{j}$ for some distinct $i,j \in [k]$.
   \end{enumerate}
\end{defn}

The following lemma builds a sufficient condition that allows us to merge two distinct parts into a closed one, which can be proved by a similar argument as in~\cite{Han2024}.  We present its  proof in Appendix D of the arXiv version.

\begin{lemma}\label{lem: merge lem}
Given any positive integers $s,r$ with $r\ge3$ and constants $\beta>0,\,t\in (0,1)$, {the following holds for sufficiently large $n$}.
Let $G$ be an $n$-vertex weighted graph, and let $\mathcal{P}=\{V_1, \dots, V_C\}$ be a partition of $V(G)$ such that each $V_i$ is $(K_r, \beta n,s)$-closed. For distinct $i,j\in [C]$, if there exist two $r$-vectors $\mathbf{s}, \mathbf{t} \in I_{\mathcal{P}}^{\beta}(G,K_r)$ with $\mathbf{s-t}=\mathbf{u}_i-\mathbf{u}_j$, then $V_i\cup V_j$ is $(K_r,\frac{\beta}{2}n,2rs)$-closed.
\end{lemma}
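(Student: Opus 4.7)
The only nontrivial case is a pair $u\in V_i$, $v\in V_j$ with $i\neq j$; pairs inside a single part are handled directly by the given closedness. Fixing a forbidden set $W$ with $|W|\le\beta n/2$, the plan is to produce a set $S\subseteq V(G)\setminus(W\cup\{u,v\})$ of size at most $2r^2s-1$ such that both $G[S\cup\{u\}]$ and $G[S\cup\{v\}]$ admit heavy $K_r$-factors.

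\textbf{Gadgets.} First I would invoke the $(K_r,\beta)$-robustness of $\mathbf{s}$ and $\mathbf{t}$ successively (first against the forbidden set $W\cup\{u,v\}$, then against this set enlarged by the first chosen copy; both sizes remain $\le\beta n$) to obtain vertex-disjoint heavy $K_r$-copies $T_\mathbf{s},T_\mathbf{t}$ in $G-W-\{u,v\}$ of respective index vectors $\mathbf{s}$ and $\mathbf{t}$. The identity $\mathbf{s}-\mathbf{t}=\mathbf{u}_i-\mathbf{u}_j$ forces $\mathbf{s}_i\ge 1$ and $\mathbf{t}_j\ge 1$, so I would fix $x\in V(T_\mathbf{s})\cap V_i$ and $y\in V(T_\mathbf{t})\cap V_j$. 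Then I would link $u$ to $x$ inside $V_i$ and $v$ to $y$ inside $V_j$: applying $(K_r,\beta n,s)$-closedness of $V_i$ to $\{u,x\}$ against the forbidden set $W\cup\{v\}\cup V(T_\mathbf{s})\cup V(T_\mathbf{t})$ yields $S_x\subseteq V_i$ of size at most $rs-1$ such that both $G[S_x\cup\{u\}]$ and $G[S_x\cup\{x\}]$ admit heavy $K_r$-factors; the symmetric application of $V_j$-closedness to $\{v,y\}$, with $S_x$ added to the forbidden set, produces $S_y\subseteq V_j$ of size at most $rs-1$ with heavy $K_r$-factors in $G[S_y\cup\{v\}]$ and $G[S_y\cup\{y\}]$.

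\textbf{Assembly and tilings.} I would then take $S$ to be a suitable union of $V(T_\mathbf{s})$, $V(T_\mathbf{t})$, $S_x$ and $S_y$ (including or excluding the anchors $x,y$ as needed to enforce $|S\cup\{u\}|\equiv 0\pmod r$), so $|S|\le 2r+2(rs-1)\le 2r^2s-1$. The heavy $K_r$-factor of $G[S\cup\{u\}]$ will be obtained by combining the heavy $K_r$ $T_\mathbf{s}$, the $V_i$-factor of $G[S_x\cup\{u\}]$, the $V_j$-factor of $G[S_y\cup\{y\}]$, and a closing heavy $K_r$ tile absorbing the leftover $r-1$ vertices of $V(T_\mathbf{t})\setminus\{y\}$; the factor of $G[S\cup\{v\}]$ is the mirror construction, exchanging the roles of $(T_\mathbf{s},x,u,S_x)$ and $(T_\mathbf{t},y,v,S_y)$. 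The transferral identity $\mathbf{s}-\mathbf{t}=\mathbf{u}_i-\mathbf{u}_j$ is precisely what matches the index vectors of the two closing tiles, allowing the $\mathbf{s}\leftrightarrow\mathbf{t}$ swap in the tile choice to absorb the $u\leftrightarrow v$ swap.

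\textbf{Main obstacle.} The hard part will be the ``closing tile'' together with the accompanying vertex-parity bookkeeping: since $|S_x|,|S_y|\equiv -1\pmod r$, a naive union of the four pieces has $|S\cup\{u\}|\equiv -1\pmod r$, so the construction must carefully include or exclude the anchors $x,y$ and invoke the transferral identity to produce consistent factors on both sides. This is the lattice-based exchange argument of~\cite{Han2024}, and it transports to the edge-weighted setting with no essential change because heaviness only needs to be checked on each individual tile.
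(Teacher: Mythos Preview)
Your setup is right, but the assembly step has a real gap. After using $T_{\mathbf s}$, the factor on $S_x\cup\{u\}$, and the factor on $S_y\cup\{y\}$, the leftover in $S\cup\{u\}$ is exactly $V(T_{\mathbf t})\setminus\{y\}$, which has $r-1$ vertices. There is no ``closing heavy $K_r$ tile'' to absorb these: nothing in the hypotheses promises a heavy $K_r$ on those $r-1$ vertices plus any auxiliary vertex, and no choice of including/excluding the anchors $x,y$ fixes the parity simultaneously on both sides (for instance, dropping $y$ leaves $S_y$ together with $V(T_{\mathbf t})\setminus\{y\}$, whose size is $\equiv -2\pmod r$). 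Your ``main obstacle'' paragraph correctly locates the problem but does not resolve it; the transferral identity alone does not manufacture the missing tile.

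What the paper does differently is add $r-1$ more connectors. The point of $\mathbf s-\mathbf t=\mathbf u_i-\mathbf u_j$ is not a tile swap but the fact that $V(T_{\mathbf s})\setminus\{x\}$ and $V(T_{\mathbf t})\setminus\{y\}$ have the \emph{same} index vector, so their vertices can be paired as $(u_\ell,v_\ell)$ with $u_\ell,v_\ell$ in a common part $V_{a_\ell}$ for each $\ell\in[r-1]$. One then picks pairwise disjoint connectors $S_\ell$ for each pair $(u_\ell,v_\ell)$ using closedness of $V_{a_\ell}$, in addition to your $S_x,S_y$. With $\hat S=\bigcup_\ell S_\ell\cup S_x\cup S_y\cup V(T_{\mathbf s})\cup V(T_{\mathbf t})$, the factor of $G[\hat S\cup\{u\}]$ is built from the heavy $K_r$ on $V(T_{\mathbf s})$, the factors in $G[S_x\cup\{u\}]$, $G[S_y\cup\{y\}]$, and $G[S_\ell\cup\{v_\ell\}]$ for all $\ell$; the factor on the $v$-side uses $T_{\mathbf t}$ and the other option of each connector. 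The total size is at most $(r+1)(rs-1)+2r\le 2r^2s-1$, matching the $2rs$ in the conclusion. (A minor side point: closedness of $V_i$ gives connectors in $V(G)$, not in $V_i$; your claim $S_x\subseteq V_i$ is not part of the definition, though it is harmless here.)
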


To end this section, we give a full proof of Lemma~\ref{lem: partition lem}.
\begin{proof}[\textbf{Proof of Lemma~\ref{lem: partition lem}}]
Given $t\in (0,1)$, and $\gamma,\beta',\mu>0$ with $\beta'<\frac{\mu}{2}$, we further  choose constants $$\frac{1}{n}\ll\beta\ll\beta_1\ll\beta',\gamma.$$ 
Assume that $G$ is an $n$-vertex weighted complete  graph such that {$\de^w(G)\ge (\frac{1+3t}{4}+\mu)n$} and every vertex in $V(G)$ is $(K_4,\beta' n, 1)$-reachable to at least $\gamma n$ other vertices. We first have the following subtle observation to aid our proof.

\begin{fact}\label{fact: 2-reachable}
Given positive integers $m_1,m_2,s$ and a constant $t\in (0,1)$, let $H$ be an $n$-vertex weighted  graph. For any two vertices $u,w\in V(H)$, if there exist $m_1$ vertices that are $(K_4,m_2,s)$-reachable to both $u$ and $w$, 
then $u$ and $w$ are $(K_4,m,2s)$-reachable in $H$, where $m=\min\{m_1-1,m_2-4s\}$.
\end{fact}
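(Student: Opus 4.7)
My plan is a standard \emph{transitivity through an intermediary} argument. Let $T$ be a set of $m_1$ vertices, each $(K_4,m_2,s)$-reachable to both $u$ and $w$. One may assume $u,w\notin T$, since otherwise $u$ and $w$ would already be $(K_4,m_2,s)$-reachable, which is strictly stronger than the desired $(K_4,m,2s)$-reachability (as $m\le m_2$ and $s\le 2s$). Given an arbitrary forbidden set $W\subseteq V(H)$ with $|W|\le m$, the condition $m\le m_1-1$ lets us pick an intermediary $v\in T\setminus W$.

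Next I would invoke the reachability hypothesis twice in succession, threading the forbidden sets carefully so that the two outputs are disjoint. Applying $(u,v)$-reachability with forbidden set $W\cup\{w\}$ (of size at most $m+1\le m_2$) yields a set $S_1\subseteq V(H)$ of size at most $4s-1$, disjoint from $W\cup\{u,v,w\}$, such that both $H[S_1\cup\{u\}]$ and $H[S_1\cup\{v\}]$ contain $t$-heavy $K_4$-factors; here disjointness of $S_1$ from $\{u,v\}$ is automatic from the factor structure. Then applying $(v,w)$-reachability with forbidden set $W\cup S_1\cup\{u\}$ (of size at most $m+(4s-1)+1=m+4s\le m_2$) yields $S_2\subseteq V(H)$ of size at most $4s-1$, disjoint from $W\cup S_1\cup\{u,v,w\}$, such that both $H[S_2\cup\{v\}]$ and $H[S_2\cup\{w\}]$ contain $t$-heavy $K_4$-factors.

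Finally I would set $S:=S_1\cup S_2\cup\{v\}$. By construction $S\cap W=\emptyset$ and
\[
|S|\le (4s-1)+(4s-1)+1=8s-1=4\cdot(2s)-1,
\]
which exactly matches the size budget allowed by $(K_4,m,2s)$-reachability. The key observation is that $S\cup\{u\}=(S_1\cup\{u\})\sqcup(S_2\cup\{v\})$ and $S\cup\{w\}=(S_1\cup\{v\})\sqcup(S_2\cup\{w\})$ are partitions into pieces each carrying a $t$-heavy $K_4$-factor, so both $H[S\cup\{u\}]$ and $H[S\cup\{w\}]$ inherit $t$-heavy $K_4$-factors, as required.

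I do not expect any real obstacle; the proof is essentially arithmetic bookkeeping of the two forbidden-set budgets. The two constraints $m\le m_1-1$ (to secure a fresh intermediary $v$) and $m\le m_2-4s$ (so that after reserving $S_1$ and $u$ alongside $W$ there is still room inside the $(v,w)$-reachability budget) are precisely what force the formula $m=\min\{m_1-1,m_2-4s\}$ in the statement.
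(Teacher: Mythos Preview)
Your proposal is correct and follows exactly the standard transitivity-through-an-intermediary argument; the paper does not give a proof of this fact at all (it is stated as a ``subtle observation'' inside the proof of Lemma~\ref{lem: partition lem}), so your write-up supplies precisely what the paper omits. The only point worth tightening is your assertion that ``disjointness of $S_1$ from $\{u,v\}$ is automatic from the factor structure'': the parity argument rules out \emph{exactly one} of $u,v$ lying in $S_1$, but not both; the cleanest fix is simply to include $v$ in the first forbidden set (size $\le m+2\le m_2$) so that $v\notin S_1$, which then forces $u\notin S_1$ by parity, and likewise to note that $v\notin S_2$ follows once you know $v\in T\setminus W$ can be placed in the second forbidden set without exceeding $m_2$ (or, as the paper implicitly does throughout, to read the definition of a connector $S$ as automatically excluding the two endpoints).
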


We may assume that there are two vertices in $G$ that are not $(K_4,\beta_1 n,2)$-reachable to each other, say $v_1,v_2$, otherwise $V(G)$ is $(K_4,\beta_1 n,2)$-closed as desired. 
Also note that $v_1$ and $v_2$ are not $(K_4,\beta' n,1)$-reachable as $\beta_1\ll \beta'$.
Lemma \ref{lem: two reachable from three} tells that every three vertices of $V(G)$ must contain two vertices that are $(K_4,\beta' n,1)$-reachable. Hence, for every vertex $v\in V(G)\setminus \{v_1,v_2\}$, either $v$ and $v_1$ are $(K_4,\beta' n,1)$-reachable or $v$ and $v_2$ are $(K_4,\beta' n,1)$-reachable.

For each $v_i$ with $i\in[2]$, we write $\tilde{N}(v_i)$ for the set of vertices which are $(K_4,\beta'n,1)$-reachable to $v_i$. Then the following properties hold: 
\begin{enumerate}[label = (\roman{enumi})]
    \item \label{obs 1}  any vertex $v\in V(G)\setminus \{v_1,v_2\}$ lies in $\tilde{N}(v_1)\cup \tilde{N}(v_2)$.
    \item \label{obs 2} $|\tilde{N}(v_1)\cap \tilde{N}(v_2)|\le \beta_1n$. Otherwise, Fact~\ref{fact: 2-reachable} implies that $v_1,v_2$ are $(K_4,\beta_1n,2)$-reachable, a contradiction.
\end{enumerate}
For each $i\in [2]$, let 
\[
U_i=(\tilde{N}(v_i)\cup \{v_i\})\setminus \tilde{N}(v_{3-i}).
\]
We claim that $U_i$ is $(K_4,\beta'n,1)$-closed.
Otherwise, there exist two vertices $u_1,u_2\in U_i$ that are not $(K_4,\beta'n,1)$-reachable.
Since every vertex in $U_i$ is not $(K_4,\beta'n,1)$-reachable to $v_{3-i}$, we have that $\{u_1,u_2,v_{3-i}\}$ are not $(K_4,\beta'n,1)$-reachable to each other, also a contradiction.

Let $U_0=V(G)\setminus(U_1\cup U_2)$.
We have $|U_0|\le\beta_1n$ due to \ref{obs 1} and \ref{obs 2} above.
To construct the desired reachability partition, we proceed by reassigning each vertex from $U_0$ to either 
$U_1$ or $U_2$ in the following. %In order to obtain the desired reachability partition, we now drop each vertex of $U_0$ back into some $U_i$ for $i\in [2]$ as follows.
By the assumption that each vertex $v\in U_0$ is $(K_4,\beta' n, 1)$-reachable to at least $\gamma n$ other vertices, we have  
$|\tilde{N}(v)\setminus U_0|\ge \gamma n-|U_0|\ge \gamma n-\beta_1n>2\beta_1n$ as $\beta_1\ll\gamma$.
Then for every $v\in U_0$, by the Pigeonhole principle, there exists some $i\in [2]$ such that $v$ is $(K_4,\beta' n, 1)$-reachable to at least $\beta_1n+1$ vertices in $U_i$.
Together with  $\beta_1\ll\beta'$, Fact~\ref{fact: 2-reachable} implies that $v$ is $(K_4,\beta_1 n, 2)$-reachable to every vertex in $U_i$.
Now partition $U_0$ as $U_0=R_1\cup R_2$, where for each $i\in [2]$, $R_i$ denotes a subset $U_0$ each of which is  $(K_4,\beta_1 n, 2)$-reachable to every vertex in $U_i$.
Applying Fact~\ref{fact: 2-reachable} again yields that every two vertices in $R_i$ are $(K_4,\beta n, 4)$-reachable for every $i\in [2]$ as $\beta\ll\beta_1$.
Let $V_i=R_i\cup U_i$ for each $i\in [2]$ and let $\mathcal{P}=\{V_1,V_2\}$.
Clearly, both $V_1$ and $V_2$ are $(K_4,\beta n, 4)$-closed.
Also, for each $i\in [2]$, as $\beta_1\ll\gamma$, one has 
\[
|V_i|\ge |U_i|\ge |\tilde{N}(v_i)|-\beta_1 n\ge \gamma n-\beta_1n\ge \frac{\gamma}{2}n.
\]
This completes the proof. % by taking  $\beta=\beta_3$ and $s=2^2$.
\end{proof}

%The rest of this paper is devoted to the proof of Lemma~\ref{main lemma}.

\subsection{Proof of Lemma~\ref{main lemma}}
\begin{proof}[{\textbf{Proof of Lemma~\ref{main lemma}}}]
Given constants $\mu>0$ and $t\in (0,1)$, %Let $\beta,\beta'$ be defined in Lemma \ref{lem: partition lem}. %$t\in (0,1)$ and a constant $\mu>0$.
we further choose
\[
\tfrac{1}{n}\ll \beta, \tfrac{1}{k}\ll\varepsilon\ll\tfrac{1}{p},\beta_1\ll \gamma\ll \beta'\ll \mu,
\]
and fix an $n$-vertex $p$-edge-colored weighted complete graph $G$ with $\de^w(G)\ge (\frac{1+3t}{4}+\mu)n$.
% \begin{claim}
% There is a partition $V(G)=B\cup U$ such that $|B|\le\gamma n$ and every vertex in $U$ is $(K_4,  \beta'n,1)$-reachable to at least $\gamma n$ vertices in $U$. 
% \end{claim}

Assume there exists a vertex $v\in V(G)$ that is $(K_4,  \beta'n,1)$-reachable to a set $B_1$ of vertices with $|B_1|\leq \gamma n-1$. % fewer than $\gamma n$ vertices in $V(G)$, denote by $B_1$ the set of vertices in $V(G)$ that are $(K_4,  \beta'n,1)$-reachable from $v$. %denoted as $B_1$.
Let $B=B_1\cup \{v\}$ and $U=V(G)\setminus B$. 
We claim that $U$ is $(K_4,  (\beta'-\gamma) n,1)$-inner closed. 
%Since $\gamma\ll\mu$, we have 
%\[
%\de^w(G[V(G)\setminus B_1])\ge \left(\frac{1+3t}{4}+\frac{\mu}{2}\right)n.
%\]
For every two vertices $v_1,v_2\in U$, it is obvious that $v$ is not $(K_4,  \beta'n,1)$-reachable to $v_1$ or $v_2$ in $G$. It follows from Lemma~\ref{lem: two reachable from three} that $v_1$ and $v_2$ are $(K_4,\beta' n,1)$-reachable in $G$.
%By applying Lemma~\ref{lem: two reachable from three} on  $G[V(G)\setminus B_1]$, for the three vertices $v,v_1,v_2$, we know that $v_1$ and $v_2$ are $(K_4,\beta' n,1)$-reachable in $G[V(G)\setminus B_1]$.
Therefore, by definition $U=V(G)\setminus B$ is $(K_4,(\beta'-\gamma)n,1)$-inner closed. As $\beta\ll \beta'-\gamma$, the set $U$ is $(K_4,\beta n,1)$-inner closed, as desired.

Next, we may assume that every vertex in $G$ is $(K_4,  \beta'n,1)$-reachable to at least $\gamma n$ vertices and  we shall prove that $V(G)$ itself is $(K_4,\beta n,32)$-closed. %\footnote{ we can give a explicit estimate for s}.
By applying Lemma~\ref{lem: partition lem} on $G$ with $\beta', \gamma$, we know  that either $V(G)$ is $(K_4,\beta_1n,2)$-closed or $V(G)$ admits a partition $\mathcal{P}=\{V_1,V_2\}$ where each $V_i$ is $(K_4,\beta_1 n,4)$-closed and $|V_i|\ge\frac{\gamma n}{2}$. In the following, we only consider the latter case.

Applying Lemma~\ref{thm: degree form of RL} on $G$ with the corresponding edge coloring $c(w,p)$ and ${\mathbf{d}:=(\frac{\mu}{8p},\cdots,\frac{\mu}{8p})}$, we obtain an $\eps$-regular partition $\mathcal{P}'=\{V_0\}\cup\{V_{i,j}: i\in [2], j\in [k_i]\}$ refining $\mathcal{P}$ and a spanning subgraph $G'\subseteq G$ with the properties~\ref{pro1}-\ref{pro5}, where we denote $k:=k_1+k_2$.
Let $R:=R(\eps)$ be the weighted reduced graph on the vertex set $\{V_{i,j}: i\in [2], j\in [k_i]\}$ and edge weight $w_R(V_{i,i'}V_{j,j'})=\sum_{\ell\in [p]}\frac{\ell-1}{p}\cdot d_\ell(V_{i,i'},V_{j,j'})$ for any $i,j\in [2],\,i'\in [k_i]$ and $j'\in [k_j]$. Then together with the choice $\eps\ll \tfrac{1}{p}\ll \mu$, Lemma~\ref{lem: degree of reduced graph} implies that
\begin{align}\label{min degree}
\delta^{w}(R)\ge \left(\frac{1+3t}{4}+\frac{\mu}{4}\right)k.
\end{align}
%\red{Yang: Use $w()$ to denote the weighting in $R$}

%For any $U\subseteq \{V_{i,j}: i\in [2], j\in [k_i\}$ and $W\subseteq \overline{U}$, define $E(U,W)$ as the set of edges in  $R$ with one endpoint in $U$ and the other in $W$. Let $d(U):=\sum_{e\in \binom{U}{2}}d(e)$ and $d(U,W):=\sum_{e\in E(U,W)}d(e)$. }
%A copy of $K_r$ in $R$ is called a 
%\textit{$t$-heavy $K_r$} (for $r\geq 2$) if its total edge weight is more than 
% $t\binom{r}{2}$. 
%We refer to every $K_r$ copy in graph $R$ that has total edge weight at least $t\binom{r}{2}$ as a \textit{$t$-heavy $K_r$} for every $r\ge 2$.

For each $i\in [2]$, set $\mathcal{V}_i:=\{V_{i,j}:j\in [k_i]\}$ and $\mathcal{P}^*:=\{\mathcal{V}_1,\mathcal{V}_2\}$. In what follows, we shall find transferrals in $R$ and then applying Lemma~\ref{lem: embedding lem} to obtain many vertex-disjoint heavy $K_4$ in $G$ with the corresponding index. Finally, we use Lemma~\ref{lem: merge lem} with $r=s=4$ to deduce that $V(G)=V_1\cup V_2$ is $(K_4,{\beta} n,32)$-closed. %Recall that $\beta\ll \eps$ and $\frac{\mu}{8p}>\frac{2}{p^2}$\red{(This is weird!, move it to where it is applied)}. 
The following claim is an immediate consequence of Lemma \ref{lem: embedding lem}.
\begin{claim}\label{reduced-transferral}
    If $R$ contains two heavy $K_4$'s with vertex sets $S,\,T$ respectively, and ${\bf i}_{\mathcal{P}^*}(S)-{\bf i}_{\mathcal{P}^*}(T)=\mathbf{u}_1-\mathbf{u}_2$, then there are two $4$-vectors $\mathbf{s},\,\mathbf{t}\in I_{\mathcal{P}}^{2\beta}(G,K_4)$
    and $\mathbf{s-t}$ is a transferral of the form $\mathbf{u}_1-\mathbf{u}_2$. 
\end{claim}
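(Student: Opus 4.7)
The plan is to apply Lemma~\ref{lem: embedding lem} twice, once to $S$ and once to $T$, and then simply read off the induced index vectors with respect to the coarser partition $\mathcal{P}=\{V_1,V_2\}$. The key structural observation is that $\mathcal{P}'$ refines $\mathcal{P}$, so each cluster $V_{i,j}$ is entirely contained in $V_i$. Hence any copy of $K_4$ in $G$ that meets the clusters of $S$ in exactly one vertex per cluster automatically contributes $|S\cap\mathcal{V}_i|$ vertices to $V_i$ for each $i\in[2]$; in other words, its index vector with respect to $\mathcal{P}$ equals ${\bf i}_{\mathcal{P}^*}(S)$. I would therefore \emph{define} $\mathbf{s}:={\bf i}_{\mathcal{P}^*}(S)$ and $\mathbf{t}:={\bf i}_{\mathcal{P}^*}(T)$; these are 4-vectors in $\mathbb{Z}^2$ because $|S|=|T|=4$, and by hypothesis they satisfy $\mathbf{s}-\mathbf{t}=\mathbf{u}_1-\mathbf{u}_2$.

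To establish the robustness $\mathbf{s}\in I_{\mathcal{P}}^{2\beta}(G,K_4)$, I would apply Lemma~\ref{lem: embedding lem} to the heavy $K_4$ in $R$ on $S$, but with the embedding constant taken to be $3\beta$ in place of $\beta$. This is permissible since our hierarchy forces $3\beta\ll\varepsilon$, which is all the embedding lemma requires. The lemma then furnishes $3\beta n$ pairwise vertex-disjoint $t$-heavy $K_4$'s in $G$, each meeting every cluster of $S$ in exactly one vertex, and (by the observation above) each having index vector $\mathbf{s}$ with respect to $\mathcal{P}$. Now for any $W\subseteq V(G)$ with $|W|\le 2\beta n$, vertex-disjointness implies that $W$ can intersect at most $2\beta n$ of these copies, so at least $\beta n\ge 1$ of them lie entirely inside $G-W$. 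Any one such copy is a $t$-heavy $K_4$ in $G-W$ with index vector $\mathbf{s}$, confirming $\mathbf{s}\in I_{\mathcal{P}}^{2\beta}(G,K_4)$. Applying the identical argument to $T$ gives $\mathbf{t}\in I_{\mathcal{P}}^{2\beta}(G,K_4)$.

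There is no substantive obstacle here; the claim is essentially bookkeeping on top of Lemma~\ref{lem: embedding lem}. The only point requiring a moment's care is the factor-$3$ inflation of the embedding constant, which is precisely what lets the pool of embedded heavy $K_4$'s survive the deletion of an arbitrary excluded set $W$ of size $2\beta n$.
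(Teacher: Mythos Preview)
Your proposal is correct and matches the paper's approach: the paper simply states that the claim is an immediate consequence of Lemma~\ref{lem: embedding lem}, and you have accurately filled in the details (the refinement observation, the definition $\mathbf{s}:={\bf i}_{\mathcal{P}^*}(S)$, $\mathbf{t}:={\bf i}_{\mathcal{P}^*}(T)$, and the counting that $3\beta n$ disjoint copies survive the removal of any $2\beta n$ vertices).
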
%have the following several claims to derive that $V_1$ and $V_2$ can be merged, that is, $V(G)=V_1\cup V_2$ is also closed.
%We first show that every $t$-heavy $K_{r-1}$ in $R$ is covered by a linear number of $t$-heavy $K_r$'s.

Based on this result, our construction of transferrals essentially proceeds by detecting the existence of crossing heavy edges or triangles in $R$ in subsequent claims. Here a subgraph of $R$ is called \textit{crossing} if its vertex set intersects with both $\mathcal{V}_1$ and $\mathcal{V}_2$. 
For each $i\in [2]$, we call $\mathcal{V}_i$ a \textit{small} part in $\mathcal{P}^*$ if $k_i<\frac{1}{4}k$.

\begin{claim}\label{claim: no small partition}
If there exists a small part in $\mathcal{P}^*$, then $V(G)$ is $(K_4,{\beta}n,32)$-closed.
\end{claim}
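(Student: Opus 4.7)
The goal is to exhibit two heavy $K_4$'s in the reduced graph $R$ whose index vectors with respect to $\mathcal{P}^*$ differ by $\mathbf{u}_1-\mathbf{u}_2$, pass them through Claim~\ref{reduced-transferral} to produce transferrals $\mathbf{s},\mathbf{t}\in I_{\mathcal{P}}^{2\beta}(G,K_4)$ with $\mathbf{s}-\mathbf{t}=\mathbf{u}_1-\mathbf{u}_2$, and then apply Lemma~\ref{lem: merge lem} with $r=s=4$ and $\beta$-parameter $2\beta$ in order to merge $V_1$ and $V_2$ into one closed set. Since each $V_i$ is $(K_4,\beta_1n,4)$-closed and $2\beta\le\beta_1$ by the choice of constants, the hypotheses of Lemma~\ref{lem: merge lem} are satisfied, and its conclusion gives that $V(G)=V_1\cup V_2$ is $(K_4,\beta n,32)$-closed, which is exactly the claim.

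Without loss of generality assume $\mathcal{V}_2$ is the small part, so $k_2<\tfrac{k}{4}$ and $k_1>\tfrac{3k}{4}$; also $k_2\ge 1$, as $|V_2|\ge\tfrac{\gamma}{2}n$, $|V_0|\le\varepsilon n$ and $\varepsilon\ll\gamma$. By \eqref{min degree} we have $\delta^w(R)\ge(\tfrac{1+3t}{4}+\tfrac{\mu}{4})k$, so a direct averaging argument (equivalently, Proposition~\ref{prop: heavy triangle}\ref{quarter1} read at $r=2$) shows that every vertex of $R$ has at least $\tfrac{k}{4}$ heavy-edge neighbours in $R$, and Proposition~\ref{prop: heavy triangle}\ref{quarter1} itself (applied to $R$ with $\mu/4$ in place of $\mu$) shows that every heavy edge, respectively every heavy triangle, of $R$ extends to a heavy triangle, respectively a heavy $K_4$, via at least $\tfrac{k}{4}$ different vertices.

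To build a heavy $K_4$ of index $(4,0)$, the plan is to pick any $V\in\mathcal{V}_1$ and extend three times: at each of the three steps the $\ge\tfrac{k}{4}$ candidate extension vertices cannot all lie in $\mathcal{V}_2$ (whose size is $<\tfrac{k}{4}$), so at least one candidate belongs to $\mathcal{V}_1$, and three such choices yield a heavy $K_4$ sitting entirely in $\mathcal{V}_1$. To build a heavy $K_4$ of index $(3,1)$, the plan is to start instead from any $V^*\in\mathcal{V}_2$ and run exactly the same three-step extension, at each step selecting the new vertex from $\mathcal{V}_1$ by the identical counting (the forbidden vertices in $\mathcal{V}_2\setminus\{V^*\}$ number at most $k_2-1<\tfrac{k}{4}$). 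The two heavy $K_4$'s thus produced will have indices $(4,0)$ and $(3,1)$, whose difference is $\mathbf{u}_1-\mathbf{u}_2$, as required.

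The only essential ingredient in the whole argument is the strict inequality $k_2<\tfrac{k}{4}$, which is literally the defining assumption that $\mathcal{V}_2$ is small and is what makes every iterated extension inside $\mathcal{V}_1$ possible. Everything else --- feeding the pair of heavy $K_4$'s through Claim~\ref{reduced-transferral} to obtain the transferrals $\mathbf{s},\mathbf{t}$, and reading off the final closedness parameters from Lemma~\ref{lem: merge lem} with $r=s=4$ --- is routine parameter bookkeeping rather than a genuine obstacle.
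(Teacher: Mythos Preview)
Your proof is correct and follows essentially the same approach as the paper: both arguments iterate Proposition~\ref{prop: heavy triangle}\ref{quarter1} three times, using the pigeonhole $k_{\text{small}}<\tfrac{k}{4}$ at each step to force the extension vertex into the large part, and then invoke Claim~\ref{reduced-transferral} and Lemma~\ref{lem: merge lem}. The only cosmetic difference is that the paper takes $\mathcal{V}_1$ as the small part and produces heavy $K_4$'s with index vectors $(1,3)$ and $(0,4)$, whereas you take $\mathcal{V}_2$ as small and produce $(3,1)$ and $(4,0)$; these are mirror images of each other.
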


\begin{proof}%[\textbf{Proof of Claim~\ref{no small partition}}]
Without loss of generality, assume that $\mathcal{V}_1$ is a small part in $\mathcal{P}^*$, that is, $k_1<\frac{1}{4}k$. 
Based on Lemma~\ref{lem: merge lem} and Claim~\ref{reduced-transferral}, it suffices to find two heavy $K_4$'s in $R$ with vertex sets $S,\,T$ respectively, such that ${\bf i}_{\mathcal{P}^*}(S)-{\bf i}_{\mathcal{P}^*}(T)=\mathbf{u}_1-\mathbf{u}_2$. %We will prove that $V(G)$ is $(K_4,\frac{\beta}{2}n,32)$-closed. 

Notice that $R$ is a weighted complete graph. Since $\delta^w(R)\ge \left(\frac{1+3t}{4}+\frac{\mu}{4}\right)k$, Proposition~\ref{prop: heavy triangle}~\ref{quarter1} implies that every vertex in $V(R)$ is incident to at least $\frac{k}{4}$ heavy edges. 
As $k_1<\frac{k}{4}$, there is a {crossing} heavy edge, say $V_{1,1}V_{2,1}$ in $R$ where $V_{1,1}\in \mathcal{V}_1$ and $V_{2,1}\in \mathcal{V}_2$. %\red{In G or in R?}. 
%Let $e=X_1Y_1$ be such an edge in $R$, where $X_1\in V_1$ and $Y_1\in V_2$. % there is a To streamline notation, we suppose that $e=\{v_1u_2\}$ is a $t$-heavy edge where $u_2\in V_2$. 
Applying  Proposition~\ref{prop: heavy triangle}~\ref{quarter1} again yields that for at least $\frac{k}{4}$ vertices $Z\in V(R)$, the triple $\{V_{1,1},V_{2,1},Z\}$ induces a 
heavy triangle in $R$. %By Claim~\ref{heavy triangle}, there are at least $\frac{1}{4}k$ vertices in $V(R)$ such that each vertex forms a $t$-heavy triangle with $v_1$ and $u_2$.
Since $k_1<\frac{1}{4}k$, there is a vertex $V_{2,2}\in \mathcal{V}_2$ such that $V_{1,1}V_{2,1}V_{2,2}$ is a heavy triangle in $R$. % we can obtain a vertex $u_3\in V_2$ such that $d(v_1u_2,u_3)> 2t$.
%Note that $v_1u_2u_3$ is a $t$-heavy triangle. In the same way, by invoking 
We apply Proposition~\ref{prop: heavy triangle}~\ref{quarter1} on the heavy triangle $V_{1,1}V_{2,1}V_{2,2}$ to pick a vertex $V_{2,3}\in \mathcal{V}_2$ such that $V_{1,1}V_{2,1}V_{2,2}V_{2,3}$ is a heavy $K_4$ in $R$ as $k_1<\frac{1}{4}k$. 

Similarly, $R$ contains a heavy edge with both vertices in $\mathcal{V}_2$ as $k_1<\frac{1}{4}k$, and by repeatedly applying Proposition~\ref{prop: heavy triangle}~\ref{quarter1}, one can obtain a heavy $K_4$ whose vertices belong to $\mathcal{V}_2$.
This together with $V_{1,1}V_{2,1}V_{2,2}V_{2,3}$ yields two heavy $K_4$'s in $R$ with index vectors $(0,4)$ and $(1,3)$, respectively, as desired.
\end{proof}
Next we may assume that neither $\mathcal{V}_1$ nor $\mathcal{V}_2$ is a  small part with respect to $\mathcal{P}^*$.
% The subsequent lemma shows that if $G$ contains two vertices in different parts of $\mathcal{V}_1$ and $\mathcal{V}_2$ that are reachable, then the $\mathcal{V}_1$ and $\mathcal{V}_2$ can be merged.
% \begin{claim}\label{claim: merge colsed partition}
% If there exist two vertices $v_1\in \mathcal{V}_1$ and $v_2\in \mathcal{V}_2$ which are $(K_4,\beta'n,1)$-reachable, then $\mathcal{V}_1\cup \mathcal{V}_2$ is $(K_4,\frac{\beta'}{2}n,8)$-closed. %\red{(What is $s_1$?)}.
% \end{claim}
% \begin{proof}%[\textbf{Proof of Claim~\ref{merge colsed partition}}]
% Since $v_1\in \mathcal{V}_1$ and $v_2\in \mathcal{V}_2$ are $(K_4,\beta'n,1)$-reachable, for any set $W$ of $\beta'n$ vertices, there is a set $S\subseteq V(G)\setminus W$ of order $3$ such that both $G[S\cup \{v_1\}]$ and $G[S\cup \{v_2\}]$ are $t$-heavy $K_4$\red{($v_i$ is a cluster in R?)}.
% Assume that the index vectors of $G[S\cup \{v_1\}], G[S\cup \{v_2\}]$ with respect to $\mathcal{P}$ are $\mathbf{s},\mathbf{t}$, respectively.  
% %Note that the two $4$-vectors 
% Thus, $\mathbf{s},\mathbf{t}\in I_{\mathcal{P}}^{\beta'}(G,K_4)$ \red{(This is not true, check the definition)}
% and $\mathbf{s-t=u_1-u_2}$.
% It follows from Lemma~\ref{lem: merge lem} that $\mathcal{V}_1\cup \mathcal{V}_2$ is $(K_4,\frac{\beta'}{2}n,8)$-closed.
% \end{proof}

\begin{claim}\label{claim: no crossing heavy triangle}
If $R$ contains a crossing heavy triangle, then $V(G)$ is $(K_4,{\beta}n,32)$-closed.
\end{claim}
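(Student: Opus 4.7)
The plan is to exhibit two heavy $K_4$s in $R$ whose index vectors with respect to $\mathcal{P}^*$ differ by $\mathbf{u}_1 - \mathbf{u}_2$. Claim~\ref{reduced-transferral} then transfers this to a pair $\mathbf{s},\mathbf{t}\in I_{\mathcal{P}}^{2\beta}(G,K_4)$ with the same difference, and Lemma~\ref{lem: merge lem} (applied with $r=s=4$) merges $V_1$ and $V_2$ into a $(K_4,\beta n,32)$-closed set, completing the claim. A natural way to achieve this is to produce two heavy $K_4$s sharing three vertices whose distinct vertices lie in different parts of $\mathcal{P}^*$.

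Let $T=\{X_1,X_2,X_3\}$ be the given crossing heavy triangle; up to relabelling, $T$ has index $(2,1)$ with $X_1,X_2\in\mathcal{V}_1$ and $X_3\in\mathcal{V}_2$. Proposition~\ref{prop: heavy triangle}~\ref{quarter1} applied in $R$ (using $\delta^w(R)\ge(\tfrac{1+3t}{4}+\tfrac{\mu}{4})k$) supplies a set $E(T)\subseteq V(R)\setminus T$ of at least $k/4$ vertices $Z$ with $w(T,Z)>3t$, each yielding a heavy $K_4$ $T\cup\{Z\}$. If $E(T)$ meets both parts, pick $Z_i\in E(T)\cap\mathcal{V}_i$ for $i=1,2$; then $T\cup\{Z_1\}$ (index $(3,1)$) and $T\cup\{Z_2\}$ (index $(2,2)$) differ by $\mathbf{u}_1-\mathbf{u}_2$ and we are done.

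Otherwise, WLOG $E(T)\subseteq\mathcal{V}_1$. Fix $Z_1\in E(T)$, set $Q=T\cup\{Z_1\}$ (a heavy $K_4$ of index $(3,1)$), and invoke Proposition~\ref{prop: heavy}~\ref{per1} to obtain a heavy triangle $T'\subseteq Q$ distinct from $T$. Case split on the index of $T'$: if $T'=\{X_1,X_2,Z_1\}\subseteq\mathcal{V}_1$, Proposition~\ref{prop: heavy triangle}~\ref{quarter1} supplies at least $k/4$ extensions of $T'$, any of which lying in $\mathcal{V}_1$ gives a heavy $K_4$ of index $(4,0)$, pairing with $Q$ to deliver the transferral. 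If instead $T'$ is a crossing triangle of index $(2,1)$, any extension of $T'$ lying in $\mathcal{V}_2$ gives a heavy $K_4$ of index $(2,2)$, again pairing with $Q$.

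The main obstacle is the residual configuration in which every auxiliary heavy triangle above also has all its extensions confined to a single part. To handle this, I would upgrade the extension step using Proposition~\ref{prop: heavy triangle}~\ref{quarter2}: for any heavy $K_4$ $Q$ there exist $\ge\mu k$ vertices $v$ with $w(Q,v)>1+3t$, and any such $v$ automatically satisfies $w(T'',v)>3t$ for every sub-triangle $T''\subseteq Q$. Hence one $v$ extends simultaneously every heavy sub-triangle of $Q$ to a heavy $K_4$, and tracking the resulting index vectors---together with iterating into heavy $K_5$s (via the $K_r$-version of Proposition~\ref{prop: heavy}~\ref{per1}, any heavy $K_5$ in $R$ contains $\ge 2$ heavy $K_4$s, which differ by $\mathbf{u}_1-\mathbf{u}_2$ exactly when the two removed vertices lie in different parts)---should extract the transferral in all remaining sub-cases. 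This careful index bookkeeping will be the technical heart of the argument.
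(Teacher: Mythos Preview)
Your setup and the easy cases are fine: if the extensions $E(T)$ of the crossing heavy triangle hit both parts, or if the extra vertex $v$ supplied by Proposition~\ref{prop: heavy triangle}~\ref{quarter2} lands in the ``right'' part, you are done. The problem is the residual case you single out, and there your proposed fix does not close the gap.

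Concretely, take $T=\{X_1,X_2,X_3\}$ with $X_3\in\mathcal V_2$, $E(T)\subseteq\mathcal V_1$, $Q=T\cup\{Z_1\}$ of index $(3,1)$, and suppose the second heavy triangle $T'$ guaranteed by Proposition~\ref{prop: heavy}~\ref{per1} is crossing, say $T'=\{X_1,X_3,Z_1\}$. If every $v$ with $w(Q,v)>1+3t$ lies in $\mathcal V_1$, then both $T\cup\{v\}$ and $T'\cup\{v\}$ have index $(3,1)$ again. In the heavy $K_5$ $Q\cup\{v\}$, the $K_r$-version of Proposition~\ref{prop: heavy}~\ref{per1} only tells you that \emph{some} heavy $K_4$ through $v$ exists; it could perfectly well be one of the three $(3,1)$-indexed ones rather than $\{X_1,X_2,Z_1,v\}$ (the sole $(4,0)$ candidate), because $\{X_1,X_2,Z_1\}$ need not be heavy in this sub-case. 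Iterating just reproduces the same configuration with new crossing $(2,1)$-triangles, and there is no termination mechanism. So this is not a bookkeeping issue---the approach as stated lacks a structural lever that forces the index to change.

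The paper supplies exactly that lever. It first observes that two heavy triangles sharing an edge, together with Proposition~\ref{prop: heavy triangle}~\ref{quarter2}, give two heavy $K_4$s sharing a face; hence if no transferral exists, no two vertices of $R$ in different parts can be $(K_3,0,1)$-reachable. This turns the problem into a contradiction argument rather than a constructive search. The paper then picks $Y_1$ to \emph{maximise} $w(Y_1,T)$ (not an arbitrary extender), so that, as in Lemma~\ref{lem: two reachable from three}, at least two of $X_iY_1Y_2$ are heavy. The no-cross-reachability assumption forces these to be $X_1Y_1Y_2$ and $X_2Y_1Y_2$, and simultaneously forces $X_1X_2Y_1$ to be non-heavy (else $Y_1$ and $X_3$ would be $(K_3,0,1)$-reachable via the shared edge $X_1X_2$). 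But then the second heavy triangle in $Q$ must be $X_iX_3Y_1$ for some $i$, which shares an edge with $X_iY_1Y_2$, making $X_3$ and $Y_2$ cross-reachable---a contradiction. None of these three ingredients (the reachability reduction, the maximising choice, and the edge-sharing chain) appear in your plan; at least one of them, or an equivalent device, is needed to finish.
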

\begin{proof}%[\textbf{Proof of Claim~\ref{no crossing heavy triangle}}]
Without loss of generality, assume that $V_{1,1}V_{1,2}V_{2,1}$ is a heavy triangle in $R$. Then by Lemma~\ref{lem: merge lem} and Claim~\ref{reduced-transferral}, it suffices to find two heavy $K_4$'s with vertex sets $S,\,T$ respectively, such that ${\bf i}_{\mathcal{P}^*}(S)-{\bf i}_{\mathcal{P}^*}(T)=\mathbf{u}_1-\mathbf{u}_2$.
By applying Proposition~\ref{prop: heavy triangle}~\ref{quarter1}, we obtain a subset $\mathcal{Y}\subseteq V(R)$ of size at least $\frac{1}{4}k$ such that every vertex $Y\in \mathcal{Y}$ satisfies $w_R(Y,V_{1,1}V_{1,2}V_{2,1})>3t$. Choose $Y_1\in \mathcal{Y}$ such that $w_R(Y_1,V_{1,1}V_{1,2}V_{2,1})=\max\{w_R(Y,V_{1,1}V_{1,2}V_{2,1}):Y\in \mathcal{Y}\}$. % is maximal among all vertices from $Y$. Again using the fact that 
Also there is a vertex $Y_2\in V(R)$ such that $w_R(Y_2,V_{1,1}V_{1,2}V_{2,1}Y_1)> 1+3t$, and such a vertex $Y_2$ has at least $\frac{\mu}{4}k$ choices by Proposition~\ref{prop: heavy triangle}~\ref{quarter2}. % let $u_5$ be such a vertex. % we choose one as $u_5$.
We may further assume that $Y_1$ and $Y_2$ lie in the same part $\mathcal{V}_j$ for some $j\in [2]$. Otherwise, suppose $Y_1\in \mathcal{V}_1$ and $Y_2\in \mathcal{V}_2$,  then $V_{1,1}V_{1,2}V_{2,1}Y_1$ and $V_{1,1}V_{1,2}V_{2,1}Y_2$ are two heavy $K_4$'s in $R$ with respective index vectors $(3,1)$ and $(2,2)$, and we are done. By the same reason, we further assume that no two vertices lying in distinct parts of $\mathcal{P}^*$ are $(K_4,0,1)$-reachable in $R$. Furthermore, combined with Proposition~~\ref{prop: heavy triangle}~\ref{quarter2}, this implies that no two vertices from distinct parts in $\mathcal{P}^*$ are $(K_3,0,1)$-reachable in $R$ either.
We proceed by considering whether 
$Y_1$ belongs to $\mathcal{V}_1$ or $\mathcal{V}_2$.%, reaching a contradiction to the assumption in either cas.
\medskip
    
{\bf Case 1.} $Y_1,Y_2\in \mathcal{V}_1$. \medskip

\noindent In this case, $V_{1,1}V_{1,2}Y_1$ is not a heavy triangle, otherwise $Y_1$ and $V_{2,1}$ are $(K_3,0,1)$-reachable, contrary to our assumption.
We set $w_R(Y_1Y_2)=x$.
By the choice of $Y_1$, we know that $3\geq w_R(V_{1,1}V_{1,2}V_{2,1},Y_1)\ge w_R(V_{1,1}V_{1,2}V_{2,1},Y_2)>1+3t-x$.
Thus $x>3t-2$ and 
\begin{align}\label{weight1}
w_R(V_{1,1}V_{1,2}V_{2,1},Y_1Y_2)=w_R(V_{1,1}V_{1,2}V_{2,1},Y_1)+w_R(V_{1,1}V_{1,2}V_{2,1},Y_2)>2(1+3t-x).
\end{align}
%\begin{claim}\label{two heavy triangles}
We claim that there are at least two heavy triangles among $V_{1,1}Y_1Y_2, V_{1,2}Y_1Y_2$ and $V_{2,1}Y_1Y_2$, as otherwise 
\[
w_R(V_{1,1}V_{1,2}V_{2,1},Y_1Y_2)=w_R(V_{1,1},Y_1Y_2)+w_R(V_{1,2},Y_1Y_2)+w_R(V_{2,1},Y_1Y_2)\le 2+2(3t-x),
\]
which contradicts with \eqref{weight1}. If $V_{2,1}Y_1Y_2$ and $V_{1,1}Y_1Y_2$ (or $V_{1,2}Y_1Y_2$) are heavy triangles, then $V_{2,1}$ and $V_{1,1}$ (resp.~$V_{1,2}$) are $(K_3,0,1)$-reachable in $R$, a contradiction. Thus $V_{1,1}Y_1Y_2, V_{1,2}Y_1Y_2$ are heavy triangles.
Recall that $V_{1,1}V_{1,2}Y_1$ is not a heavy triangle. Combined with Proposition~\ref{prop: heavy}~\ref{per1}, this means either $V_{1,1}V_{2,1}Y_1$ or $V_{1,2}V_{2,1}Y_1$ is a heavy triangle. This implies that $Y_2$ and $V_{2,1}$ are actually $(K_3,0,1)$-reachable in either case, also a contradiction.\medskip

%\sout{ By applying Lemma~\ref{lem: two reachable from three} to $R$, we conclude that $V_{1,1}$ and $V_{1,2}$ are $(K_4,0,1)$-reachable in $R$. Moreover, the proof of Lemma~\ref{lem: two reachable from three} implies that both $V_{1,1}Y_1Y_2$ and $V_{1,2}Y_1Y_2$ are $t$-heavy triangles by our choice of $Y_1$ and $Y_2$. We claim that $V_{1,1}V_{1,2}Y_1$ is not a $t$-heavy triangle. Otherwise, there are at least $\frac{\mu}{4} k$ vertices $Y$ with $w_R(Y,V_{1,1}V_{1,2}V_{2,1}Y_1)\ge 1+3t$. Each such $Y$ would form two $t$-heavy $K_4$ copies, $YV_{1,1}V_{1,2}V_{2,1}$ and $YV_{1,1}V_{1,2}Y_1$. Applying Claim~\ref{reduced-transferral} yields two  $4$-vectors $\mathbf{s}, \mathbf{t} \in I_{\mathcal{P}}^{2\beta}(G,K_4)$ with $\mathbf{s-t}=\mathbf{u_1-u_2}$, a contradiction. Together with Proposition~\ref{prop: heavy}~\ref{per1}, we obtain that at  least one of $V_{1,1}V_{2,1}Y_1$ and $V_{1,2}V_{2,1}Y_1$ is a $t$-heavy triangle in $R$. Without loss of generality, assume that $V_{1,1}V_{2,1}Y_1$ is $t$-heavy. Observe that there are at least $\frac{\mu}{4} k$ vertices $Z$ such that $w_R(Z,V_{1,1}V_{2,1}Y_1Y_2)\ge 1+3t$. Combining with the $t$-heaviness of  $V_{1,1}Y_1Y_2$ and Claim \ref{reduced-transferral}, we get two  $4$-vectors $\mathbf{s}, \mathbf{t} \in I_{\mathcal{P}}^{2\beta}(G,K_4)$ with $\mathbf{s-t}=\mathbf{u_1-u_2}$, a contradiction. }

%this yields a transferral of the form  $\mathbf{u_1-u_2}$, a contradiction. % by Lemma~\ref{lem: merge lem}. Similarly, if $u_2u_3u_4$ is a triangle, the same result applies.

{\bf Case 2.} $Y_1,Y_2\in\mathcal{V}_2$.\medskip
    
\noindent By a similar argument as \textbf{Case} 1, we derive that $V_{1,1}V_{1,2}Y_1$ is another heavy triangle (than $V_{1,1}V_{1,2}V_{2,1}$) inside $V_{1,1}V_{1,2}V_{2,1}Y_1$ and furthermore \[ V_{1,1}Y_1Y_2, V_{1,2}Y_1Y_2\ \text{are}\ \text{heavy triangles}.\] This tells that $Y_2$ and $V_{1,2}$ are actually $(K_3,0,1)$-reachable, a contrary to our assumption.  %\red{embedding lemma}which implies a transferral of the form $\mathbf{u_1-u_2}$, a contradiction.

This completes the proof.
\end{proof}

%\sout{Using Claim~\ref{claim: no small partition} and Claim~\ref{claim: no crossing heavy triangle}, } we obtain the following claim.
From Claim~\ref{claim: no crossing heavy triangle}, we may assume that there is no crossing heavy triangles in $R$, or else we are done.

\begin{claim}\label{claim: weight of crossing edge}
The weight of each crossing edge in $R$ is at most $\frac{3t-1}{2}$.
\end{claim}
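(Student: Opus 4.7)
The plan is a short averaging argument by contradiction, leveraging Claim~\ref{claim: no crossing heavy triangle} (no crossing heavy triangle in $R$) against the inherited minimum weighted degree \eqref{min degree}. I would suppose some crossing edge $V_{1,1}V_{2,1}$, with $V_{1,1}\in\mathcal{V}_1$ and $V_{2,1}\in\mathcal{V}_2$, has weight $w:=w_R(V_{1,1}V_{2,1})>\frac{3t-1}{2}$, and chase a contradiction from the weighted degree sum $d^w(V_{1,1})+d^w(V_{2,1})$.

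The key observation is that for every $Z\in V(R)\setminus\{V_{1,1},V_{2,1}\}$, the triangle $V_{1,1}V_{2,1}Z$ is automatically crossing, hence by Claim~\ref{claim: no crossing heavy triangle} not heavy; this gives $w_R(V_{1,1}Z)+w_R(V_{2,1}Z)\le 3t-w$. Summing over the $k-2$ such $Z$ and adding $2w$ to record the edge $V_{1,1}V_{2,1}$ itself (counted twice, once from each endpoint) would produce
\[
d^w(V_{1,1})+d^w(V_{2,1})\le 2w+(k-2)(3t-w).
\]
Comparing this upper bound against the lower bound $\bigl(\tfrac{1+3t}{2}+\tfrac{\mu}{2}\bigr)k$ from \eqref{min degree} and isolating the coefficient of $k$ would yield
\[
k\cdot\left(\tfrac{3t-1}{2}-w-\tfrac{\mu}{2}\right)\ge 6t-4w.
\]
The assumption $w>\tfrac{3t-1}{2}$ forces the bracket on the left to be at most $-\tfrac{\mu}{2}$, so the left-hand side tends to $-\infty$ with $k$, while the right-hand side is bounded below by $6t-4$ since $w\le 1$. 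Under the hierarchy $\tfrac{1}{k}\ll\mu$ this is the desired contradiction.

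I expect no real obstacle beyond bookkeeping: the edge $V_{1,1}V_{2,1}$ must be counted exactly twice in the joint degree sum, and one must note that Claim~\ref{claim: no crossing heavy triangle} applies uniformly to \emph{every} third vertex $Z$, regardless of whether $Z$ lies in $\mathcal{V}_1$ or in $\mathcal{V}_2$. There is no case analysis to do and no regularity machinery required at this step.
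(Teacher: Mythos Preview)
Your proposal is correct and is essentially the paper's own proof: both use that every triangle on the crossing edge is crossing and hence not heavy, sum the resulting bound $w_R(Z,V_{1,1}V_{2,1})\le 3t-w$ over all $Z$, and compare against the weighted degree lower bound \eqref{min degree} to force $w\le\frac{3t-1}{2}$. The only cosmetic difference is that the paper rearranges the inequality to isolate $x$ directly rather than phrasing it as a contradiction via $\tfrac{1}{k}\ll\mu$, but the computation is identical.
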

\begin{proof}%[\textbf{Proof of Claim~\ref{weight of crossing edge}}]
Assume, without loss of generality, that  $V_{1,1}V_{2,1}$ is an arbitrary crossing edge in $R$ and let $x:=w_R(V_{1,1}V_{2,1})$.
For any vertex $Y\in V(R)\setminus \{V_{1,1},V_{2,1}\}$, Claim~\ref{claim: no crossing heavy triangle} guarantees that the triangle $V_{1,1}V_{2,1}Y$ cannot be $t$-heavy. Thus, $w_R(Y,V_{1,1}V_{2,1})\le 3t-x$. 
Since $\delta^w(R)\ge \left(\frac{1+3t}{4}+\frac{\mu}{4}\right)k$, one has 
\[
2\cdot\left(\frac{1+3t}{4}+\frac{\mu}{4}\right)k-2x\le w_R(V_{1,1}V_{2,1},V(R)\setminus \{V_{1,1},V_{2,1}\})\le (3t-x)\cdot (k-2), 
\]
which implies $x\le \frac{3t-1}{2}$.
\end{proof}

%Now we show how to derive a contradiction with the fact that the above procedure will terminate after $q-2$ steps.
Assuming $k_1\le\frac{k}{2}$,
%for any $V_{1,i}$ with  $i\in [k_1]$, 
we have 
\[
d_{R}^w(V_{1,1})
\le k_1+\frac{3t-1}{2}\cdot k_2
=\frac{3-3t}{2}\cdot k_1+\frac{3t-1}{2}\cdot k
\le \frac{1+3t}{4}\cdot k,
\]
where the first inequality follows from Claim~\ref{claim: weight of crossing edge} while the last inequality holds as $k_1\le\frac{k}{2}$.
This yields a contradiction with (\ref{min degree}).

This completes the proof. 
\end{proof}

\section{Concluding remarks}
In this paper, we prove Conjecture~\ref{conj: main conj} for the tetrahedron case. However, for larger cliques $K_r$, new approaches are in demand.
Indeed, a crucial component of our proof depends on the existence of a reachable pair among every three vertices,  which implies that most vertices are $1$-reachable to many other vertices.
For $r=5$, the main technical barrier in our approach concerns the inability to guarantee the existence of at least one reachable pair for any finite set of vertices under a weaker weighted degree condition $\de^w(G)\ge (\frac{1+4t}{5}+o(1))n$.
On the other hand, establishing an almost cover via our augmentation technique is considerably complicated for larger $r$.
We hope that our new perspectives in Section 2.3 will be useful in the further study of Conjecture~\ref{conj: main conj}.

\bibliographystyle{abbrv}
\bibliography{ref.bib}

\begin{thebibliography}{10}

\bibitem{AM}
M.~Axenovich and R.~Martin.
\newblock A version of {S}zemer{\'e}di's regularity lemma for multicolored graphs and directed graphs that is suitable for induced graphs.
\newblock {\em arXiv preprint arXiv:1106.2871}, 2016.

\bibitem{Balogh2025}
J.~Balogh, D.~Brada\v{c}, and B.~Lidick\'y.
\newblock Weighted {T}ur\'an theorems with applications to {R}amsey-{T}ur\'an type of problems.
\newblock {\em J. Graph Theory}, 2025.
\newblock \url{https://doi.org/10.1002/jgt.23244}.

\bibitem{BKLY}
J.~Balogh, G.~Kemkes, C.~Lee, and S.~J. Young.
\newblock Towards a weighted version of the {H}ajnal--{S}zemer{\'e}di {T}heorem.
\newblock {\em Comb. Prob. Comput.}, 22(3):346--350, 2013.

\bibitem{BMS}
J.~Balogh, T.~Molla, and M.~Sharifzeadeh.
\newblock Triangle factors of graphs without large independent sets and of weighted graphs.
\newblock {\em Random Struct. Algorithms}, 49(4):669--693, 2016.

\bibitem{Chang2023}
F.~Chang, J.~Han, J.~Kim, G.~Wang, and D.~Yang.
\newblock Embedding clique-factors in graphs with low $\ell$-independence number.
\newblock {\em J. Combin. Theory Ser. B}, 161:301--330, 2023.

\bibitem{chang2020factors}
Y.~Chang, J.~Han, Y.~Kohayakawa, P.~Morris, and G.~O. Mota.
\newblock Factors in randomly perturbed hypergraphs.
\newblock {\em Random Structures Algorithms}, 60(2):153--165, 2022.

\bibitem{Chen2024}
M.~Chen, J.~Han, G.~Wang, and D.~Yang.
\newblock {$H$}-factors in graphs with small independence number.
\newblock {\em J. Combin. Theory Ser. B}, 169:373--405, 2024.

\bibitem{Corradi}
K.~Corr{\'a}di and A.~Hajnal.
\newblock On the maximal number of independent circuits in a graph.
\newblock {\em Acta Math. Acad. Sci. Hungar.}, 14:423--439, 1963.

\bibitem{diestel2017graph}
R.~Diestel.
\newblock {\em Graph Theory}, volume 173 of {\em Graduate Texts in Mathematics}.
\newblock Springer, Berlin, Heidelberg, 6th edition, 2025.

\bibitem{Dirac}
G.~A. Dirac.
\newblock Some theorems on abstract graphs.
\newblock {\em Proc. Lond. Math. Soc.}, s3-2(1):69--81, 1952.

\bibitem{Erdos-Sos}
P.~Erd\H{o}s and V.~T. S\'{o}s.
\newblock Some remarks on {R}amsey's and {T}ur\'{a}n's theorem.
\newblock In {\em Combinatorial theory and its applications, {I}-{III} ({P}roc. {C}olloq., {B}alatonf\"{u}red, 1969)}, volume~4 of {\em Colloq. Math. Soc. J\'{a}nos Bolyai}, pages 395--404. North-Holland, Amsterdam-London, 1970.

\bibitem{gaojiang}
J.~Gao, S.~Jiang, H.~Liu, and M.~Sankar.
\newblock Generalized {R}amsey--{T}ur\'an density for cliques.
\newblock {\em Forum Math. Sigma}, 13:e78 1--27, 2025.

\bibitem{HajnalSz}
A.~Hajnal and E.~Szemer{\'e}di.
\newblock Proof of a conjecture of {P}. {E}rd{\H{o}}s.
\newblock {\em Comb. Theory Appl.}, 2(4):601--623, 1970.

\bibitem{Han2024}
J.~Han, P.~Morris, G.~Wang, and D.~Yang.
\newblock A {R}amsey-{T}ur{\'a}n theory for tilings in graphs.
\newblock {\em Random Struct. Algorithms}, 64(1):94--124, 2024.

\bibitem{Keevash2015}
P.~Keevash and R.~Mycroft.
\newblock A geometric theory for hypergraph matching.
\newblock {\em Memoirs Am. Math. Soc.}, 233(1098):vi--95, 2015.

\bibitem{Kiersteadk2008}
H.~A. Kiersteadk and A.~V. Kostochka.
\newblock A short proof of the {H}ajnal–{S}zemer{\'e}di theorem on equitable colouring.
\newblock {\em Comb. Prob. Comput.}, 17(2):265--270, 2008.

\bibitem{Knierim2020}
C.~Knierim and P.~Su.
\newblock ${K}_r$-factors in graphs with low independence number.
\newblock {\em J. Combin. Theory Ser. B}, 148:60--83, 2020.

\bibitem{KS}
J.~Koml\'{o}s and M.~Simonovits.
\newblock Szemer\'{e}di's regularity lemma and its applications in graph theory.
\newblock In {\em Combinatorics, {P}aul {E}rd\H{o}s is eighty, {V}ol. 2 ({K}eszthely, 1993)}, volume~2 of {\em Bolyai Soc. Math. Stud.}, pages 295--352. J\'{a}nos Bolyai Math. Soc., Budapest, 1996.

\bibitem{Kuhnsurvey2009}
D.~K\"{u}hn and D.~Osthus.
\newblock Embedding large subgraphs into dense graphs.
\newblock {\em London Math. Soc. Lecture Note Ser.}, 365:137--167, 2009.

\bibitem{Kuhn2009}
D.~K\"{u}hn and D.~Osthus.
\newblock The minimum degree threshold for perfect graph packings.
\newblock {\em Combinatorica}, 29(1):65--107, 2009.

\bibitem{Lo2015}
A.~Lo and K.~Markstr{\"o}m.
\newblock ${F}$-factors in hypergraphs via absorption.
\newblock {\em Graphs and Combinatorics}, 31(3):679--712, 2015.

\bibitem{Nenadov2020}
R.~Nenadov and Y.~Pehova.
\newblock On a {R}amsey-{T}ur{\'a}n variant of the {H}ajnal-{S}zemer{\'e}di theorem.
\newblock {\em SIAM J. Discret. Math.}, 34(2):1001--1010, 2020.

\bibitem{RRS2009}
V.~R\"{o}dl, A.~Ruci\'{n}ski, and E.~Szemer\'{e}di.
\newblock Perfect matchings in large uniform hypergraphs with large minimum collective degree.
\newblock {\em J. Combin. Theory Ser. A}, 116(3):613--636, 2009.

\bibitem{Sos}
V.~T. S\'{o}s.
\newblock On extremal problems in graph theory.
\newblock In {\em Combinatorial {S}tructures and their {A}pplications ({P}roc. {C}algary {I}nternat. {C}onf., {C}algary, {A}lta., 1969)}, pages 407--410. Gordon and Breach, New York-London-Paris, 1970.

\bibitem{Townsend2015}
T.~D. Townsend.
\newblock {\em Extremal problems on graphs, directed graphs and hypergraphs}.
\newblock Phd thesis, University of Birmingham, 2015.

\end{thebibliography}

\begin{appendices}
\section{Proof of Lemma~\ref{thm: degree form of RL}}
We use the following fact to prove Lemma~\ref{thm: degree form of RL}.
\begin{fact}\label{fact: regular pair}
Given constants $\eta>\eps>0$ and a bipartite graph $G=X\cup Y$, if $(X,Y)$ is $\eps$-regular, then for all $X_1\subseteq X$ and $Y_1\subseteq Y$ with $|X_1|\ge \eta|X|$ and $|Y_1|\ge \eta|Y|$, we have that $(X_1,Y_1)$ is $\eps'$-regular in $G$ for any $\eps'\ge \max\{\frac{\eps}{\eta},2\eps\}$.
\end{fact}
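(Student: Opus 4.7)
The plan is to unpack the definition of $\eps'$-regularity for $(X_1,Y_1)$ and reduce it to the $\eps$-regularity of the larger pair $(X,Y)$ via a triangle-inequality argument. So I would fix arbitrary subsets $X_2\subseteq X_1$ and $Y_2\subseteq Y_1$ with $|X_2|\ge \eps'|X_1|$ and $|Y_2|\ge \eps'|Y_1|$, and aim to show $|d(X_2,Y_2)-d(X_1,Y_1)|<\eps'$.

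The key observation is that both pairs $(X_1,Y_1)$ and $(X_2,Y_2)$ qualify as admissible test subsets for the $\eps$-regular pair $(X,Y)$. Indeed, since $\eta>\eps$, the hypothesis $|X_1|\ge \eta|X|$ already gives $|X_1|\ge \eps|X|$, and similarly $|Y_1|\ge \eps|Y|$. For the smaller pair, the chain
\[
|X_2|\ge \eps'|X_1|\ge \eps'\eta|X|\ge \eps|X|
\]
holds precisely because of the lower bound $\eps'\ge \eps/\eta$, and the symmetric chain holds for $|Y_2|$. Two applications of $\eps$-regularity of $(X,Y)$ then yield
\[
|d(X_1,Y_1)-d(X,Y)|<\eps \quad\text{and}\quad |d(X_2,Y_2)-d(X,Y)|<\eps.
\]
The triangle inequality now gives $|d(X_2,Y_2)-d(X_1,Y_1)|<2\eps\le \eps'$, where the last step uses the other lower bound $\eps'\ge 2\eps$. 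Since $X_2,Y_2$ were arbitrary, this is exactly the definition of $\eps'$-regularity for $(X_1,Y_1)$.

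There is no genuine obstacle here; the two lower bounds $\eps/\eta$ and $2\eps$ appearing in the definition of $\eps'$ are each extracted from exactly one step of the argument: the first is what is needed so that relatively large subsets of $X_1,Y_1$ remain large in absolute terms inside $X,Y$, and the second is the error budget required to absorb two $\eps$-regularity estimates combined by the triangle inequality. Hence I do not anticipate any subtlety beyond careful bookkeeping of these two constraints.
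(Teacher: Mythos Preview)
Your proposal is correct and is the standard argument; the paper simply states this as a fact without proof, so there is nothing to compare against.
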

\begin{proof}[{\textbf {Proof of Lemma~\ref{thm: degree form of RL}}}]
Given an integer $p\ge 2$, a constant $\eps>0$ and a real vector $\mathbf{d}=(d_1,\cdots,d_p)\in [0,1]^p$, we choose a positive constant $\eps'$ satisfying $\eps'\le \frac{\eps^2}{4p}$. % and $\frac{1}{m'}\ll\eps$. %Let $G=(V,E,c)$ be a $p$-edge-colored graph on $n$ vertices.
Applying Theorem~\ref{thm: original form of RL} with parameter $\eps'$, there exist an integer $M'$ such that if we let $M:=\frac{4M'}{\eps}\geq M'$ and $G$ is a $p$-edge-colored graph on $n\geq M$ vertices, then $G$ has  an $\eps'$-regular partition $\mathcal{P}'=\{V_0',V_1',\cdots,V_{k'}'\}$ of the vertex set $V$ with $|V_1'|=\cdots=|V_{k'}'|=:m'$ where $\frac{1}{\eps'}\le k'\le M'$.
We will construct a  spanning subgraph $G'$ of $G$ together with a partition $\mathcal{P}=\{V_0,V_1,\cdots,V_k\}$ of $V(G)$ for some $k\le M$ satisfying properties~\ref{pro1}-\ref{pro5} by carrying out the following edge-removal procedures. % remove some edges from $G$ to obtain a spanning subgraph $G'$ and a partition $\mathcal{P}=\{V_0,V_1,\cdots,V_k\}$ of $V$ satisfying the properties~\ref{pro1}-\ref{pro5} by carrying the following steps.

(i) Remove the edges of non-regular pairs. Let $S_1$ be a set consisting of triples $(V_i',V_j',\ell)$ such that $(V_i',V_j')$ is not $\eps'$-regular in $G_{\ell}$. We recolor all edges between $V_i'$ and $V_j'$ in $G_{\ell}$ red if $(V_i',V_j',\ell)\in S_1$. 
%\sout{For each pair $(V_i',V_j')$, if $(V_i',V_j')$ is not $\eps'$-regular in $G$, that is, $(V_i',V_j')$ is not $\eps'$-regular in $G_{\ell}$ for some $\ell\in [p]$, then we recolor all edges between such $V_i'$ and $V_j'$ in $G_{\ell}$ red.}
For any vertex $v\in V$, if there are at least $\frac{\eps}{4} n$ red edges incident to $v$, then we move $v$ to $V_0'$ and delete all red edges that do not incident to any vertex in the up-to-date $V_0'$.
After deleting these edges, we observe that the degree of any vertex $v\in V$ in graph $G_{\ell}$ is at least $d_{G_{\ell}}(v)-\frac{\eps}{4}n$ for each $\ell\in [p]$.
As $\eps'\le \frac{\eps^2}{4p}$ and all but at most $\eps'k'^2$ pairs $(V_i',V_j')$ are $\eps'$-regular in $G$, the number of vertices we have moved to $V_0'$ is at most 
\[
\frac{\eps'k'^2\cdot m'^2}{\frac{\eps}{4}n}\le \frac{\eps}{4}n.
\]

(ii) Remove the edges of regular pairs with low density. Let $S_2$ be a set consisting of triples $(V_i',V_j',\ell)$ such that $d_{\ell}(V_i',V_j')\le d_{\ell}+\eps'$. We recolor all edges between $V_i'$ and $V_j'$ in $G_{\ell}$ blue for  every $(V_i',V_j',\ell)\in S_2$.
%\sout{For a fixed color $\ell\in [p]$, we consider each $\eps'$-regular pair $(V_i',V_j')$ such that $d_{\ell}(V_i',V_j')\le d_{\ell}+\eps'$ in $G_{\ell}$. 
%for which there exists at least one color $\ell\in [p]$ such that $d_{\ell}(V_i',V_j')\le d_{\ell}+\eps'$ in $G_{\ell}$.
%We then recolor all edges between each such pair $V_i'$ and $V_j'$ in $G_{\ell}$ blue, repeating this process for every $\ell\in [p]$.}
For $i,j\in [k']$ with $i\neq j$ and $\ell\in [p]$, let $X_{i,j,\ell}$ be a subset of $V_i'$ such that every vertex in $X_{i,j,\ell}$ is incident to at least $(d_{\ell}+2\eps')m'$ blue edges with the other endpoint in $V_j'$ in graph $G_{\ell}$. 
%\sout{Let $X$ be a subset of $V_i'$ such that every vertex in $X$ has at least $(d_{\ell}+2\eps')m'$ neighbors in $V_j'$ in graph $G_{\ell}$.}
Note that $|X_{i,j,\ell}|\le \eps'm'$ by Fact~\ref{fact: large deg}.
For each $v\in X_{i,j,\ell}$, we mark all but $(d_{\ell}+2\eps')m'$ of the blue edges incident to $v$ in $G_{\ell}$. Clearly,  the total number of marked edges is at most
$\binom{k'}{2}\cdot p\cdot \eps'm'\cdot m'\le \frac{p\eps'n^2}{2}.$
For every vertex $v\in V$, if it is adjacent to at least $\frac{\eps}{4}n$ marked edges, then move $v$ to $V_0'$ and delete all blue edges that do not incident to any vertex in the updated $V_0'$.
Now, in each $G_{\ell}$, for every vertex $v\in V$, we delete at most $(d_{\ell}+2\eps')m'\cdot k'+\frac{\eps}{4}n\le (d_{\ell}+\frac{\eps}{2})n$ edges incident to $v$.
As $\eps'\le \frac{\eps^2}{4p}$, the number of vertices we have moved to $V_0'$  in this stage is at most
\[
\frac{\frac{p\eps'n^2}{2}}{\frac{\eps}{2}n}\le\frac{\eps}{4}n.
\]
After the above process, %deleting these edges and removing vertices with large degree, 
we observe that for each $\ell\in [p]$, $(V_i',V_j')$ is $\eps'$-regular with density either $0$ or at least $d_{\ell}+\eps'$.

(iii) Delete all edges in $G_{\ell}[V_i']$ for all $i\in k'$ and $\ell\in [p]$. % within the cluster of the partition. 
Note that $|V_i'|\le \eps'n$ as $\frac{1}{\eps'}\le k'$.
Hence, for every $v\in V$, at most $\eps'n$ edges incident to $v$ are deleted in this step.

(v) Finally, to guarantee that all clusters have the same size, we further divide them into smaller blocks each of size $\lceil \frac{\eps n}{4k'}\rceil$.
Move the vertices that are left over in each
cluster after this process into the exceptional set $V_0'$.
Call this new exceptional set $V_0$ and the other blocks $V_1,\ldots,V_k$. %, each of size $m$.

Denote  by $G'$ the resulting spanning subgraph after removing. It is easy to check that $G'$ together with the vertex partition $\mathcal{P}=\{V_0,V_1,\ldots,V_k\}$ satisfies properties~\ref{pro1}, \ref{pro2}, \ref{pro4} and \ref{pro5} by Fact~\ref{fact: regular pair}. 
For every $v\in V$ and $\ell\in [p]$, we have
\[
d_{G_{\ell}'}(v)\ge d_{G_{\ell}}(v)-\frac{\eps}{4}n-(d_{\ell}+\frac{\eps}{2})n-\eps'n\ge d_{G_{\ell}}(v)-(d_{\ell}+\eps)n.
\]
This completes the proof.
\end{proof}

\section{Proof of Lemma~\ref{lem: abs set}}
The proof of Lemma~\ref{lem: abs set} can be easily derived by the following result. We first give the definition of absorbers for hypergraphs. Let $F$ be a $k$-graph with $b$ vertices. Suppose $\mathcal{G}$ is a $k$-graph with vertex set $V$ and let $S\subseteq V$ with $|S| = b$ be given. We call $\emptyset\neq A\subseteq V\setminus S$  an $(F,s)$-absorber,  if $|A|\leq b^2s$  and both $\mathcal{G}[A]$ and $\mathcal{G}[A\cup S]$ contain $F$-factors.%, where we shall use a simplified version. 

\begin{lemma}[\cite{chang2020factors}]\label{chang}
Let $F$ be a $k$-graph with $b$ vertices. Let $\gamma>0, 0<\varepsilon<\min\{1/3, \gamma/2\}$ and $s\in \N$.
  Then there exists $\xi>0$ such that the following holds for every sufficiently large~$n$.
  Suppose $\mathcal{G}$ is an $n$-vertex $k$-graph such that, there exists $V_0\subset V(\mathcal{G})$ of size at most $\varepsilon n$ such that for every $b$-subset $S$ of $V(\mathcal{G})\setminus V_0$, there are at least $\gamma n$ vertex-disjoint $(F,s)$-absorbers for $S$, and for every vertex $v\in V(\mathcal{G})$, there are at least $\gamma n$ copies of $F$ containing $v$, where each pair of these copies only intersects at vertex $v$.
  Then $\mathcal{G}$ contains a subset $A\subseteq V(\mathcal{G})$ of size at most $\gamma n$ such that, for every subset
  $R\subseteq V(\mathcal{G})\setminus A$ with $|R|\leq \xi n$ such that $b$ divides $|A|+|R|$, the $k$-graph $\mathcal{G}[A\cup R]$ contains an
  $F$-factor.
\end{lemma}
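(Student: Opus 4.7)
The plan is to implement the probabilistic absorbing method of R\"odl--Ruci\'nski--Szemer\'edi in the hypergraph setting. Condition (i) --- every $b$-subset $S$ outside $V_0$ has $\gamma n$ vertex-disjoint $(F,s)$-absorbers --- provides the raw flexibility to absorb any prescribed leftover, while condition (ii) --- every vertex lies in $\gamma n$ copies of $F$ intersecting pairwise only at that vertex --- lets us deal with the exceptional set $V_0$. The absorbing set will be $A = V(\mathcal{F}') \cup V(\mathcal{F}_{V_0})$, where $\mathcal{F}'$ is a small pairwise-disjoint family of $(F,s)$-absorbers such that every $b$-set has many absorbers in $\mathcal{F}'$, and $\mathcal{F}_{V_0}$ is a greedy cover of $V_0$ by vertex-disjoint copies of $F$.

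To build $\mathcal{F}'$, independently keep each absorber with a carefully tuned small probability $q$ to form a random family $\mathcal{F}_0$. For a fixed $S$ the $\gamma n$ guaranteed absorbers are vertex-disjoint, so the ``kept'' events are mutually independent; Chernoff plus a union bound over the $\binom{n}{b}$ choices of $S$ yield, with high probability, that every $S$ has $\Theta(q\gamma n)$ absorbers in $\mathcal{F}_0$ and $|V(\mathcal{F}_0)|\le \gamma n/2$, provided $q\gamma n \gg b\log n$. Greedy pruning of intersecting pairs produces a pairwise-disjoint subfamily $\mathcal{F}'$ retaining, say, at least $2\xi n$ absorbers per $b$-set for a suitable $\xi>0$.

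Next, I build $\mathcal{F}_{V_0}$ greedily: for each $v\in V_0$ in turn, pick a copy of $F$ through $v$ avoiding the vertices used so far. Since $|V_0|\le\varepsilon n$ and the vertices used so far number fewer than $\gamma n$, while condition (ii) gives $v$ at least $\gamma n$ available copies intersecting pairwise only at $v$, such a choice always exists. Then $|A|\le \gamma n$ and $\mathcal{G}[A]$ admits an $F$-factor obtained from the internal $F$-factors of the absorbers in $\mathcal{F}'$ together with $\mathcal{F}_{V_0}$. For the absorbing property, given $R$ with $|R|\le \xi n$ and $b\mid |A\cup R|$, partition $R$ into $b$-blocks $S_1,\dots,S_m$; for each $S_i$ pick an unused $A_{S_i}\in\mathcal{F}'$ (possible since $\mathcal{F}'$ has $2\xi n$ absorbers for $S_i$ and we use at most $\xi n$ in total) and swap the internal $F$-factor of $A_{S_i}$ for the $F$-factor of $\mathcal{G}[A_{S_i}\cup S_i]$. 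This produces the desired $F$-factor of $\mathcal{G}[A\cup R]$.

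The main obstacle is the random-sampling step, where one needs simultaneous control over $\Theta(n^b)$ bad events (one per $b$-set losing all its absorbers) while keeping $|V(\mathcal{F}_0)|$ linear in $n$. The tension is delicate: choosing $q$ too large blows up the vertex budget, choosing it too small kills per-$b$-set abundance, and the greedy pruning must not wipe out the retained absorbers for any $b$-set. A cleaner alternative that avoids this trade-off is Montgomery's template method: embed into $V(\mathcal{G})$ a bipartite template $H$ with a robust matching property, and use condition (i) to plant an $(F,s)$-absorber for each edge of $H$; the robustness of $H$ then guarantees that whichever small $R$ arises, the remaining template vertices admit a perfect matching that translates into an $F$-factor of $\mathcal{G}[A\cup R]$.
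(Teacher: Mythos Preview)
The paper does not prove this lemma; it is quoted verbatim from \cite{chang2020factors}, and the surrounding discussion explicitly credits the construction to Nenadov and Pehova~\cite{Nenadov2020}. So there is no in-paper proof to compare against, but the intended argument is the Montgomery template method that you mention only at the very end as a ``cleaner alternative''. That should be your main plan, not an afterthought.

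Your primary plan --- random sampling \`a la R\"odl--Ruci\'nski--Szemer\'edi --- has a real gap as stated. The issue is structural: an $(F,s)$-absorber $A_S$ here is, by definition, \emph{disjoint} from the $b$-set $S$ it absorbs, so the absorber carries no record of which $S$ it serves. If you sample from the union over all $b$-sets $S$ of the guaranteed families of $\gamma n$ vertex-disjoint absorbers, the ground family has size on the order of $\binom{n}{b}\cdot \gamma n$; forcing $q\gamma n\to\infty$ (so that every $S$ retains many absorbers with high probability against a union bound over $\binom{n}{b}$ events) makes the expected vertex count $\Theta\!\left(q\cdot n^{b+1}\right)\gg n$, destroying the $\gamma n$ budget. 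If instead you sample from the set of all $\le b^2s$-subsets that are absorbers for \emph{some} $S$, you have no lower bound beyond $\gamma n$ on how many of them serve a \emph{given} $S$, and the same imbalance reappears. The classical RRS/Lo--Markstr\"om sampling works because the number of absorbing structures per target is a polynomial of the same degree as the total; here you are only promised a \emph{linear} supply per $b$-set, which is precisely the regime where random sampling fails and the template method is needed.

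The Montgomery/Nenadov--Pehova construction fixes this exactly as you sketch: take a bounded-degree bipartite template $H$ with a robust matching property on $O(\xi n)$ vertices, identify one side with a flexible collection of $b$-tuples in $V(\mathcal{G})\setminus V_0$, and \emph{greedily} plant one absorber per edge of $H$ --- the linear supply of vertex-disjoint absorbers per $b$-set is exactly what makes this greedy step go through. Your treatment of $V_0$ and the final swap argument are essentially correct; just be careful with the order (cover $V_0$ first, then build the template avoiding those vertices) so that the total stays below $\gamma n$.
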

\begin{proof}[{\textbf{Proof of Lemma~\ref{lem: abs set}}}]

Given $s\in \mathbb{N}$, $t\in(0,1)$ and $\gamma>0$, we shall choose $\frac{1}{n}\ll\xi\ll \frac{1}{s},\gamma$. Let $G$ be an edge-weighted graph with $|V(G)|=n$ such that for every $S\in {V(G)\choose 4}$ there is a family of at least $\gamma n$ vertex-disjoint $(K_4,s)$-absorbers. We construct a $4$-graph $\mathcal{G}$ on vertex set $V(G)$ where $E(\mathcal{G})$ consists of every $t$-heavy copy of $K_4$ in $G$. Let $F$ be a $4$-graph with $4$ vertices and exactly one edge. Then by definition, every $S\in {V(\mathcal{G})\choose 4}$ has at least $\gamma n$ vertex-disjoint $(F,s)$-absorbers. By applying Lemma~\ref{chang} on $\mathcal{G}$ with $b=4$ and $V_0=\emptyset$, we obtain a subset $A\subseteq V(\mathcal{G})$ of size at most $\gamma n$ such that, for every subset
  $R\subseteq V(\mathcal{G})\setminus A$ with $|R|\leq \xi n$ such that $4$ divides $|A|+|R|$, the $4$-graph $\mathcal{G}[A\cup R]$ contains an $F$-factor.
Then $A$ is a desired $\xi$-absorbing set in $G$.
\end{proof}

\section{Proof of Lemma~\ref{lem: absorber lem}}
Recall that two vertices $u,v$ are $(K_r,\beta n,s)$-reachable if for any set $W\subseteq V(G)$ of size at most $\beta n$, there exists a set  $S\subseteq V(G)\setminus W$ of size at most $rs-1$ such that both $G[S\cup \{u\}]$ and $G[S\cup \{v\}]$ have heavy $K_r$-factors, where we call such $S$ a \textit{$K_r$-connector} for $u,v$.
\begin{proof}[{\textbf{Proof of Lemma~\ref{lem: absorber lem}}}]
For any constants $s\in \mathbb{N}$, $\beta>0$ and $t\in (0,1)$, let $G$ be a weighted graph such that $V(G)$ is $(K_4,  \beta n,s)$-closed.
For any $4$-subset $S=\{v_1,v_2,v_3,v_4\}\subseteq V(G)$, our goal is to construct as many pairwise vertex-disjoint $(K_4,s)$-absorbers for $S$ as possible.
Let $\mathcal{A}=\{A_1,A_2,\cdots,A_{\ell}\}$ be a maximal family of vertex-disjoint $(K_4,s)$-absorbers constructed so far. Suppose, to  the contrary, that $\ell<\frac{\beta}{64s}n$. We have that the number of vertices covered by $\mathcal{A}$ is at most $\frac{\beta}{64s}n\cdot 16s=\frac{\beta}{4}n$ as each $A_i$ has size at most $16s$.

Since $V(G)$ is $(K_4,\beta n,s)$-closed, we can find a heavy $K_4$ in $V(G)\setminus (\cup_{i=1}^{\ell}A_i\cup S)$, whose vertex set is denoted as $T=\{u_1,u_2,u_3,u_4\}$.
Next, we greedily choose a family $\{S_1,S_2,S_3,S_4\}$ of vertex-disjoint subsets in $V(G)\setminus (\cup_{i=1}^{\ell}A_i\cup S\cup T)$ such that $S_i$ is a $K_4$-connector for $u_i,v_i$ with $|S_i|\le 4s-1$ for each $i\in [4]$.
This can be done as $|\cup_{i=1}^{\ell}A_i\cup (\cup_{i=1}^4 S_i)\cup S\cup T|\le \beta n$ and every pair of $u_i$ and $v_i$ are $(K_4,\beta n,s)$-reachable.
It is easy to verify that $(\cup_{i=1}^4S_i)\cup T$ is also a $(K_4,s)$-absorber for $S$, contrary to the maximality of $\ell$.
\end{proof}

\section{Proof of Lemma~\ref{lem: merge lem}}
\begin{proof}[{\textbf{Proof of Lemma~\ref{lem: merge lem}}}]
Given constants  $\be>0$, $t\in (0,1)$, let $G$ be an $n$-vertex weighted graph and $\mathcal{P}=\{V_1, V_2,\ldots,V_C\}$ be a partition of $V(G)$ as in the assumption.
Moreover let $i\neq j\in [C]$ be such that there exist $r$-vectors $\textbf{s},\textbf{t}\in I^{\be}_{\mathcal{P}}(G,K_r)$ such that $\textbf{s}-\textbf{t}=\textbf{u}_i-\textbf{u}_j$. Without loss of generality (relabelling if necessary) we may assume that $i = 1$ and $j = 2$. Then we may write $\textbf{s}=(s_1,s_2,\ldots, s_C)$ and  $\textbf{t}=(s_1-1,s_2+1,s_3,\ldots, s_C)$ for some $s_i\in \N$ such that $\sum_{i=1}^Cs_i=r$.  It suffices to show that every two vertices $x\in V_1$ and $y\in V_2$ are $\left(
K_r,\tfrac{\be}2n,2rs\right)$-reachable so let us fix such an $x$ and $y$. Let $W\subset V(G)\setminus \{x,y\}$ be a vertex subset of size at most $ \tfrac{\be}{2}n$. By the assumption of $(K_r,\be)$-robustness, we can pick two vertex-disjoint copies of heavy 
$K_r$ in $V(G)\setminus (W\cup\{x,y\})$ whose corresponding vertex sets $S, T$ have index vectors $(s_1,s_2,\ldots, s_C)$ and $(s_1-1,s_2+1,\ldots, s_C)$, respectively.\medskip

Note that we may choose $x'\in S\cap V_1$ and $y'\in T\cap V_2$ such that by letting $S\setminus \{x'\}=\{u_1,u_2,\ldots, u_{r-1}\}$ and $T\setminus \{y'\}=\{v_1,v_2,\ldots, v_{r-1}\}$, $u_j$ and $v_j$ belong to the same part of $\mathcal{P}$ for each $j\in [r-1]$. Since each $V_i$ is $(K_r,\be n, s)$-closed for each $i\in [C]$, we greedily pick a collection $\{S_1,S_2,\ldots, S_{r-1}\}$ of vertex-disjoint subsets in $V(G)\setminus (W\cup S\cup T\cup\{x,y\})$ such that each $S_j$ is a $K_r$-connector for $u_j,v_j$ with $|S_j|\le rs-1$. Indeed, note that as $n$ sufficiently large we have that  $|W|\le \be n- 2r^2s$ and so for any $r'\leq r-1$ we have
\[\left|\left(\bigcup_{j=1}^{r'}S_j\right)\cup W\cup S\cup T\cup\{x,y\}\right|\le \be n.\] We can therefore indeed pick the $S_j$ one by one because of the fact that $u_j$ and $v_j$ are $(K_r,\be n ,s)$-reachable. Similarly, we additionally choose two vertex-disjoint (from each other and all other previously chosen vertices) $K_r$-connectors, say $S_x$ and $S_y$, for $x,x'$ and $y,y'$, respectively. At this point, it is easy to verify that the subset $\hat S:=\bigcup_{i=1}^{r-1}S_i\cup S_x\cup S_y\cup S\cup T$ is actually a $K_r$-connector for $x,y$ with size at most $2r^2s-1$. For example if we want a heavy $K_r$-factor in $G[\hat S \cup \{x\}] $ (leaving $y$ uncovered), we can take the heavy $K_r$-facots in $G[S_x\cup\{x\}]$, $G[S_y\cup\{y'\}]$ and $G[S_j\cup\{v_j\}]$ for $j\in [r-1]$, as well as the copy of heavy $K_r$ on $S$.   Therefore by definition, $x$ and $y$ are $\left(K_r,\tfrac{\be}{2}n, 2rs\right)$-reachable.
\end{proof}
\end{appendices}

\end{document}